%
\documentclass[12pt]{amsart}
\usepackage{amsmath}
\usepackage{amsfonts}
\usepackage{amssymb}
\usepackage{bbding}
\usepackage{stmaryrd}
\usepackage{txfonts}
\usepackage{graphicx}
\usepackage{epsfig}
\usepackage{xypic}
\usepackage{tikz}
\usepackage{longtable}
\usepackage[all]{xy}
\usepackage{pgflibraryarrows}
\usepackage{pgflibrarysnakes}
\usepackage[shortlabels]{enumitem}
\usepackage{ifpdf}
\ifpdf
  \usepackage[colorlinks,final,backref=page,hyperindex]{hyperref}
\else
  \usepackage[colorlinks,final,backref=page,hyperindex,hypertex]{hyperref}
\fi
\usepackage{xspace}


\topmargin -.8cm \textheight 22.8cm \oddsidemargin 0cm \evensidemargin -0cm \textwidth 16.3cm


\begin{document}  

\newcommand{\nc}{\newcommand}
\nc{\on}{\operatorname}
\newcommand{\delete}[1]{}

\nc{\mlabel}[1]{\label{#1}}  
\nc{\mcite}[1]{\cite{#1}}  
\nc{\mref}[1]{\ref{#1}}  
\nc{\mbibitem}[1]{\bibitem{#1}} 

\delete{
\nc{\mlabel}[1]{\label{#1}  
{\hfill \hspace{1cm}{\bf{{\ }\hfill(#1)}}}}
\nc{\mcite}[1]{\cite{#1}{{\bf{{\ }(#1)}}}}  
\nc{\mref}[1]{\ref{#1}{{\bf{{\ }(#1)}}}}  
\nc{\mbibitem}[1]{\bibitem[\bf #1]{#1}} 
}

\newtheorem{theorem}{Theorem}[section]
\newtheorem{prop}[theorem]{Proposition}

\newtheorem{lemma}[theorem]{Lemma}
\newtheorem{coro}[theorem]{Corollary}
\newtheorem{prop-def}{Proposition-Definition}[section]
\newtheorem{claim}{Claim}[section]
\newtheorem{propprop}{Proposed Proposition}[section]
\newtheorem{conjecture}{Conjecture}
\newtheorem{assumption}{Assumption}
\newtheorem{condition}[theorem]{Assumption}
\newtheorem{question}[theorem]{Question}
\theoremstyle{definition}
\newtheorem{defn}[theorem]{Definition}
\newtheorem{remark}[theorem]{Remark}
\newtheorem{exam}[theorem]{Example}
\renewcommand{\labelenumi}{{\rm(\alph{enumi})}}
\renewcommand{\theenumi}{\alph{enumi}}


\nc{\homp}{P}
\nc{\pprime}{\frakQ}

\nc{\adec}{\check{;}}
\nc{\dftimes}{\widetilde{\otimes}} \nc{\dfl}{\succ}
\nc{\dfr}{\prec} \nc{\dfc}{\circ} \nc{\dfb}{\bullet}
\nc{\dft}{\star} \nc{\dfcf}{{\mathbf k}} \nc{\spr}{\cdot}
\nc{\disp}[1]{\displaystyle{#1}}
\nc{\bin}[2]{ (_{\stackrel{\scs{#1}}{\scs{#2}}})}  
\nc{\binc}[2]{ \left (\!\! \begin{array}{c} \scs{#1}\\
    \scs{#2} \end{array}\!\! \right )}  
\nc{\bincc}[2]{  \left ( {\scs{#1} \atop
    \vspace{-.5cm}\scs{#2}} \right )}  
\nc{\sarray}[2]{\begin{array}{c}#1 \vspace{.1cm}\\ \hline
    \vspace{-.35cm} \\ #2 \end{array}}
\nc{\bs}{\bar{S}} \nc{\dcup}{\stackrel{\bullet}{\cup}}
\nc{\dbigcup}{\stackrel{\bullet}{\bigcup}} \nc{\etree}{\big |}
\nc{\la}{\longrightarrow} \nc{\fe}{\'{e}} \nc{\rar}{\rightarrow}
\nc{\dar}{\downarrow} \nc{\dap}[1]{\downarrow
\rlap{$\scriptstyle{#1}$}} \nc{\uap}[1]{\uparrow
\rlap{$\scriptstyle{#1}$}} \nc{\defeq}{\stackrel{\rm def}{=}}
\nc{\diffa}[1]{\{#1\}}
\nc{\diffs}[1]{\Delta{#1}}
\nc{\rbring}[1]{_{RB}( #1 )}
\nc{\rrbo}{R_{RB}\langle Q\rangle}
\nc{\dis}[1]{\displaystyle{#1}} \nc{\dotcup}{\,
\displaystyle{\bigcup^\bullet}\ } \nc{\sdotcup}{\tiny{
\displaystyle{\bigcup^\bullet}\ }} \nc{\hcm}{\ \hat{,}\ }
\nc{\hcirc}{\hat{\circ}} \nc{\hts}{\hat{\shpr}}
\nc{\lts}{\stackrel{\leftarrow}{\shpr}}
\nc{\rts}{\stackrel{\rightarrow}{\shpr}} \nc{\lleft}{[}
\nc{\lright}{]} \nc{\uni}[1]{\tilde{#1}} \nc{\wor}[1]{\check{#1}}
\nc{\free}[1]{\bar{#1}} \nc{\den}[1]{\check{#1}} \nc{\lrpa}{\wr}
\nc{\curlyl}{\left \{ \begin{array}{c} {} \\ {} \end{array}
    \right .  \!\!\!\!\!\!\!}
\nc{\curlyr}{ \!\!\!\!\!\!\!
    \left . \begin{array}{c} {} \\ {} \end{array}
    \right \} }
\nc{\leaf}{\ell}       
\nc{\longmid}{\left | \begin{array}{c} {} \\ {} \end{array}
    \right . \!\!\!\!\!\!\!}
\nc{\ot}{\otimes}
\nc{\oot}{\boxtimes}
\nc{\sot}{{\scriptstyle{\ot}}}
\nc{\otm}{\overline{\ot}}
\nc{\ora}[1]{\stackrel{#1}{\rar}}
\nc{\ola}[1]{\stackrel{#1}{\la}}
\nc{\scs}[1]{\scriptstyle{#1}} \nc{\mrm}[1]{{\rm #1}}
\nc{\margin}[1]{\marginpar{\rm #1}}   
\nc{\dirlim}{\displaystyle{\lim_{\longrightarrow}}\,}
\nc{\invlim}{\displaystyle{\lim_{\longleftarrow}}\,}
\nc{\mvp}{\vspace{0.5cm}} \nc{\svp}{\vspace{2cm}}
\nc{\vp}{\vspace{8cm}} \nc{\proofbegin}{\noindent{\bf Proof: }}
\nc{\proofend}{$\blacksquare$ \vspace{0.5cm}}
\nc{\oprod}{\odot}
\nc{\sha}{{\mbox{\cyr X}}}  
\nc{\ncsha}{{\mbox{\cyr X}^{\mathrm NC}}} \nc{\ncshao}{{\mbox{\cyr
X}^{\mathrm NC,\,0}}}
\nc{\shpr}{\diamond}    
\nc{\shprm}{\overline{\diamond}}    
\nc{\shpro}{\diamond^0}    
\nc{\shprr}{\diamond^r}     
\nc{\shpra}{\overline{\diamond}^r}
\nc{\shpru}{\check{\diamond}} \nc{\catpr}{\diamond_l}
\nc{\rcatpr}{\diamond_r} \nc{\lapr}{\diamond_a}
\nc{\sqcupm}{\ot}
\nc{\lepr}{\diamond_e} \nc{\vep}{\varepsilon} \nc{\labs}{\mid\!}
\nc{\rabs}{\!\mid} \nc{\hsha}{\widehat{\sha}}
\nc{\lsha}{\stackrel{\leftarrow}{\sha}}
\nc{\rsha}{\stackrel{\rightarrow}{\sha}} \nc{\lc}{\lfloor}
\nc{\rc}{\rfloor} \nc{\sqmon}[1]{\langle #1\rangle}
\nc{\forest}{\calf} \nc{\ass}[1]{\alpha({#1})}
\nc{\altx}{\Lambda_X} \nc{\vecT}{\vec{T}} \nc{\onetree}{\bullet}
\nc{\Ao}{\check{A}}
\nc{\seta}{\underline{\Ao}}
\nc{\deltaa}{\overline{\delta}}
\nc{\trho}{\tilde{\rho}}
\nc{\tpow}[2]{{#2}^{\ot #1}}

\nc{\mmbox}[1]{\mbox{\ #1\ }} \nc{\ann}{\mrm{ann}}
\nc{\Aut}{\mrm{Aut}}
\nc{\bread}{\mrm{b}}
\nc{\can}{\mrm{can}} \nc{\colim}{\mrm{colim}}
\nc{\Cont}{\mrm{Cont}} \nc{\rchar}{\mrm{char}}
\nc{\cok}{\mrm{coker}} \nc{\dtf}{{R-{\rm tf}}} \nc{\dtor}{{R-{\rm
tor}}}
\renewcommand{\det}{\mrm{det}}
\nc{\depth}{{\mrm d}}
\nc{\Div}{{\mrm Div}} \nc{\End}{\mrm{End}} \nc{\Ext}{\mrm{Ext}}
\nc{\Fil}{\mrm{Fil}} \nc{\Frob}{\mrm{Frob}} \nc{\Gal}{\mrm{Gal}}
\nc{\GL}{\mrm{GL}} \nc{\Hom}{\mrm{Hom}} \nc{\hsr}{\mrm{H}}
\nc{\hpol}{\mrm{HP}} \nc{\id}{\mrm{id}} \nc{\im}{\mrm{im}}
\nc{\incl}{\mrm{incl}} \nc{\length}{\mrm{length}}
\nc{\LR}{\mrm{LR}} \nc{\mchar}{\rm char} \nc{\NC}{\mrm{NC}}
\nc{\Mod}{\operatorname{-Mod}}
\nc{\mpart}{\mrm{part}} \nc{\pl}{\mrm{PL}}
\nc{\ql}{{\QQ_\ell}} \nc{\qp}{{\QQ_p}}
\nc{\rank}{\mrm{rank}} \nc{\rba}{\rm{RBA }} \nc{\rbas}{\rm{RBAs }}
\nc{\rbpl}{\mrm{RBPL}}
\nc{\rbw}{\rm{RBW }} \nc{\rbws}{\rm{RBWs }} \nc{\rcot}{\mrm{cot}}
\nc{\rest}{\rm{controlled}\xspace}
\nc{\rdef}{\mrm{def}} \nc{\rdiv}{{\rm div}} \nc{\rtf}{{\rm tf}}
\nc{\rtor}{{\rm tor}} \nc{\res}{\mrm{res}} \nc{\SL}{\mrm{SL}}
\nc{\Spec}{\mrm{Spec}} \nc{\tor}{\mrm{tor}} \nc{\Tr}{\mrm{Tr}}
\nc{\mtr}{\mrm{sk}}

\nc{\ab}{\mathbf{Ab}} \nc{\Alg}{\mathbf{Alg}}
\nc{\Algo}{\mathbf{Alg}^0} \nc{\Bax}{\mathbf{Bax}}
\nc{\Baxo}{\mathbf{Bax}^0} \nc{\Dif}{\mathbf{Dif}}
\nc{\CDif}{\mathbf{CDif}}
\nc{\CRB}{\mathbf{CRB}}
\nc{\CDRB}{\mathbf{CDRB}}
\nc{\RB}{\mathrm{RB}}
\nc{\RSD}{\operatorname{RSD}}
\nc{\DRB}{\mathbf{DRB}}
\nc{\RBo}{\mathbf{RB}^0} \nc{\BRB}{\mathbf{RB}}
\nc{\Dend}{\mathbf{DD}}
\nc{\Set}{\mathbf{Set}}
\nc{\bfk}{{\bf k}} \nc{\bfone}{{\bf 1}}
\nc{\base}[1]{{a_{#1}}} \nc{\detail}{\marginpar{\bf More detail}
    \noindent{\bf Need more detail!}
    \svp}
\nc{\Diff}{\mathbf{Diff}} \nc{\gap}{\marginpar{\bf
Incomplete}\noindent{\bf Incomplete!!}
    \svp}
\nc{\FMod}{\mathbf{FMod}} \nc{\mset}{\mathbf{MSet}}
\nc{\rb}{\mathrm{RB}} \nc{\Int}{\mathbf{Int}}
\nc{\Mon}{\mathbf{Mon}}
\nc{\oR}{\overline{R}}
\nc{\remarks}{\noindent{\bf Remarks: }} \nc{\Rep}{\mathbf{Rep}}
\nc{\Rings}{\mathbf{Rings}} \nc{\Sets}{\mathbf{Sets}}
\nc{\DT}{\mathbf{DT}}

\nc{\bbA}{{\mathbb A}} \nc{\CC}{{\mathbb C}} \nc{\DD}{{\mathbb D}}
\nc{\EE}{{\mathbb E}} \nc{\FF}{{\mathbb F}} \nc{\GG}{{\mathbb G}}
\nc{\HH}{{\mathbb H}} \nc{\LL}{{\mathbb L}} \nc{\NN}{{\mathbb N}}
\nc{\QQ}{{\mathbb Q}} \nc{\RR}{{\mathbb R}} \nc{\TT}{{\mathbb T}}
\nc{\VV}{{\mathbb V}} \nc{\ZZ}{{\mathbb Z}}


\nc{\calA}{{\mathcal A}} \nc{\calC}{{\mathcal C}}
\nc{\calD}{{\mathcal D}} \nc{\calE}{{\mathcal E}}
\nc{\calF}{{\mathcal F}} \nc{\calfr}{{{\mathcal F}^{\,r}}}
\nc{\calfo}{{\mathcal F}^0} \nc{\calfro}{{\mathcal F}^{\,r,0}}
\nc{\oF}{\overline{F}}  \nc{\calG}{{\mathcal G}}
\nc{\calH}{{\mathcal H}} \nc{\calI}{{\mathcal I}}
\nc{\calJ}{{\mathcal J}} \nc{\calL}{{\mathcal L}}
\nc{\calM}{{\mathcal M}} \nc{\calN}{{\mathcal N}}
\nc{\calO}{{\mathcal O}} \nc{\calP}{{\mathcal P}} \nc{\calp}{{\mathcal P}}
\nc{\calR}{{\mathcal R}} \nc{\calS}{{\mathcal S}}
\nc{\calT}{{\mathcal T}}\nc{\caltr}{{\mathcal T}^{\,r}}
\nc{\calU}{{\mathcal U}} \nc{\calV}{{\mathcal V}}
\nc{\calW}{{\mathcal W}} \nc{\calX}{{\mathcal X}}
\nc{\calY}{{\mathcal Y}}\nc{\calZ}{{\mathcal Z}}

\nc{\fraka}{{\mathfrak a}} \nc{\frakB}{{\mathfrak B}}
\nc{\frakb}{{\mathfrak b}} \nc{\frakc}{{\mathfrak c}}
\nc{\frakd}{{\mathfrak d}}
\nc{\oD}{\overline{D}}
\nc{\frakF}{{\mathfrak F}} \nc{\frakg}{{\mathfrak g}}
\nc{\frakm}{{\mathfrak m}} \nc{\frakM}{{\mathfrak M}}
\nc{\frakMo}{{\mathfrak M}^0} \nc{\frakp}{{\mathfrak p}}
\nc{\frakQ}{{\mathcal Q}}
\nc{\frakS}{{\mathfrak S}} \nc{\frakSo}{{\mathfrak S}^0}
\nc{\fraks}{{\mathfrak s}} \nc{\os}{\overline{\fraks}}
\nc{\frakT}{{\mathfrak T}}
\nc{\oT}{\overline{T}}
\nc{\frakX}{{\mathfrak X}} \nc{\frakXo}{{\mathfrak X}^0}
\nc{\frakx}{{\mathbf x}}
\nc{\frakTx}{\frakT}      
\nc{\frakTa}{\frakT^a}        
\nc{\frakTxo}{\frakTx^0}   
\nc{\caltao}{\calt^{a,0}}   
\nc{\oV}{\overline{V}}
\nc{\ox}{\overline{\frakx}} \nc{\fraky}{{\mathfrak y}}
\nc{\frakz}{{\mathfrak z}} \nc{\oX}{\overline{X}}
\nc{\oZ}{\overline{Z}}

\font\cyr=wncyr10


\title{Representations and modules of Rota-Baxter algebras}
\author{Li Guo}
\address{Department of Mathematics and Computer Science,
Rutgers University, Newark, New Jersey 07102}
\email{liguo@rutgers.edu}
\author{Zongzhu Lin}
\address{Department of Mathematics,
Kansas State University,
Mahattan, Kansas 66506}
\email{zlin@math.ksu.edu}


\begin{abstract}
We give a broad study of representation and module theory of Rota-Baxter algebras. Regular-singular decompositions of Rota-Baxter algebras and Rota-Baxter modules are obtained under the condition of quasi-idempotency. Representations of an Rota-Baxter algebra are shown to be equivalent to the representations of the ring of Rota-Baxter operators whose categorical properties are obtained and explicit constructions are provided. Representations from coalgebras are investigated and their algebraic Birkhoff factorization is given. Representations of Rota-Baxter algebras in the tensor category contexts are also formulated.
\end{abstract}

\subjclass[2010]{16W99, 16D90, 16G99, 16S32,16B10}

\keywords{Rota-Baxter algebra, Rota-Baxter module, ring of Rota-Baxter operators, matrix representation, coalgebra
}

\maketitle

\tableofcontents

\setcounter{section}{0}

\section{Introduction}
The study of Rota-Baxter algebras originated from probability and combinatorics~\mcite{Ba,Ca,Rot1} and has recently found remarkable applications in diverse areas of mathematics and physics, especially in quantum field theory (QFT) through the algebraic approach of Connes and Kreimer to renormalization in perturbative QFT~\mcite{CK1,CM}.

As in the case of common algebraic structures such as associative algebras and Lie algebras, it is important to study the modules and representations of Rota-Baxter algebras. Our interest in pursuing this subject here is foremost motivated by investigations from QFT~\mcite{EGGV,EGs}. There, in the framework of Connes and Kreimer~\mcite{CK1}, one starts with a Hopf algebra (such as the Connes-Kreimer Hopf algebra of Feynman diagrams) and a commutative Rota-Baxter algebra $(A,Q)$ of weight $-1$ (such as the algebra of Laurent series with the projection to the pole part). Then with the convolution product, the space $\Hom(H,A)$ of linear maps is an algebra and, by post-composition, the Rota-Baxter operator $Q$ induces a Rota-Baxter operator $P$ on $\Hom(H,A)$. The Rota-Baxter algebra $(\Hom(H,A),P)$ and its (Atkinson) decomposition encode information of renormalization of QFT. Thus it would be desirable to obtain a more concrete representation of this algebra so that information could be extracted more easily. This is the approach taken in~\mcite{EGGV,EGs} where a representation for the Rota-Baxter algebra $(\Hom(H,A),P)$ is expressed as a matrix Rota-Baxter algebra $M^u_\infty(A)$ where the Rota-Baxter operator is defined entrywise. Another motivations of representations of Rota-Baxter algebra arises from algebraic and differential  geometry, which will be discussed in Section~\mref{sec:rbm}.

For an associative algebra or a Lie algebra, any representation over a vector space can be expressed in the form of a matrix algebra. As we will see in this paper, this is far from being the case for a Rota-Baxter (associative) algebra. Thus our goal of this paper is two fold. On the one hand we start a general study of representations of Rota-Baxter algebras, through modules over a Rota-Baxter algebra and the related ring of Rota-Baxter operators, inspired by the related study of differential algebras and rings of differential operators. One the other hand, we try to understand further the algebraic framework that leads to the matrix representation of Rota-Baxter algebras that arise from the aforementioned applications. Based on this work,\footnote{The present paper was started a few years ago when the two authors, working in Rota-Baxter algebra and representation theory respectively, tried to bring the two subjects together. The paper had its various versions with limited circulations, but was not completed as new and interesting connections showing up. In the mean time, several papers motivated by this paper have appeared~\cite{QGG,LQ,QP,ZGZ}. So to imitate Zariski and Samuel in the introduction of their well-known book~\cite{ZS}, this paper has become the unborn mother of several children. These and other recent developments motivated the present authors to complete the paper while leaving some loose ends to future treatments.} representations of the Rota-Baxter algebra of Laurent series were discussed in~\cite{LQ} where an interesting connection was found with class numbers in algebraic number theory, a similar approach to the Rota-Baxter algebra of polynomial algebra was taken in~\cite{QP}, and derived functors of Rota-Baxter modules were studied in~\cite{QGG}.

Here is an outline of the paper. In Section~\mref{sec:rbm} the concept of a Rota-Baxter module over a Rota-Baxter algebra is introduced and the regular-singular decomposition of a quasi-idempotent Rota-Baxter module is provided. The classical (additive) Atkinson factorization of a Rota-Baxter algebra is generalized to Rota-Baxter modules. The representation of a product Rota-Baxter algebra is discussed in terms of quiver representations. For a given Rota-Baxter algebra, the ring of Rota-Baxter operators on this Rota-Baxter algebra is introduced in Section~\mref{sec:rbo} and its relation with Rota-Baxter modules is established, by an equivalence between the category of Rota-Baxter modules and the category of modules over the ring of Rota-Baxter operators.
In Section~\mref{sec:rbost} we give a construction of the ring of Rota-Baxter operators with more detailed description for the special cases of divided powers and Laurent series.
In Section~\mref{sec:mat} we revisit the topic of matrix representations that motivated our study and give a class of representations of convolution Rota-Baxter algebras by endomorphism and matrix Rota-Baxter algebras. We also prove an algebraic Birkhoff factorization for Rota-Baxter modules. Section~\mref{sec:out} gives a brief discussion on Rota-Baxter algebras in the tensor category context.
\smallskip

\noindent
{\bf Notations.} Throughout this paper, $\bfk$ denotes a unitary commutative ring. All algebras, linear maps and tensor products are taken over $\bfk$ unless otherwise specified. By an algebra we mean a unitary associative algebra while by a nonunitary algebra we mean an associative algebra which might not have an identity.

\section{Rota-Baxter modules and their regular singular decompositions}
\mlabel{sec:rbm}

We first introduce the concept of a Rota-Baxter module with motivation from a differential module. We then give some general properties of Rota-Baxter modules before focusing on the regular-singular decomposition of Rota-Baxter modules over a class of Rota-Baxter algebras.

\subsection{Rota-Baxter modules}
\mlabel{ss:rbm}

\subsubsection{Differential modules} \label{diff:mod}

To further motivate the study of modules over a Rota-Baxter algebra, we recall the well established case of differential algebras for which we refer the reader to~\mcite{Bj} for details. Let $(R,d)$ be a {\bf differential algebra}~\mcite{Kol,SP}, defined to be a pair $(R,d)$ with $R$ a $\bfk$-algebra and $d$ a linear operator on $R$ such that
$$ d(xy)=d(x)y+xd(y) \quad \text{for all } x,y\in R.$$
An $R$-module $M$ is called a {\bf differential module} over $(R, d)$ if there is a linear map $\delta:M\to M$ such that
$$ \delta(a x)=d(a) x + a \delta(x) \quad \text{for all } a\in R, x\in M.$$
In differential geometry, such a $\delta$ is called a connection. See~\cite{HTT} for D-modules in representation theory and algebraic geometry, and \mcite{GK3} for the more general notion of differential algebras with weights.

Algebraically, let $\bfk[d]$ be the polynomial algebra in variable $d$ with the standard Hopf algebra structure coming from the algebra of regular functions on the additive algebraic $k$-group $\mathbb{G}_a$, i.e., with the comultiplication defined by $d\mapsto d\otimes 1+1\otimes d$.
$R$ being a differential algebra is equivalent to $R$ being a $\bfk[d]$-module algebra in the sense that   $R$ is a
$\bfk[d]$-module such that the multiplication map $ R\otimes R\rightarrow R$ is a
$\bfk[d]$-module homomorphism and the map of scalar  $u:\bfk \rightarrow R$ (with $u(\alpha)=\alpha 1$)  is a homomorphism of $\bfk[d]$-modules. Then we can form the smash product
algebra $ R[d]:=R\# \bfk[d]$ with the product $(1\# d)(a\#1)=d(a)\#1+a\# d$~\cite{CMo,Sw}.

As can be easily verified, an $R$-module $M$ is a differential module over the differential algebra $(R,d)$ if and only if $M$ is a module over the smash product algebra $ R\#\bfk[d]$. In particular, the category of all differential modules over a differential algebra $ (R, d) $ is an abelian category with enough projective objects.

As a motivating example, let $X$ be an affine algebraic $\bfk$-variety (with $\bfk=\mathbb{C})$ and $R=\bfk[X]$ be the algebra of regular functions. A vector field $D$ on $X$ is a derivation $D\in \operatorname{Der}_{\bfk}(R)$. An $R$-module $M$ is a quasi-coherent sheaf on $X$. The operator $d: M\rightarrow M$ making $M$ a differential module over $(R, D)$ is a connection of the sheaf along the vector field $D$. In this case, the algebra $R\#\bfk[D]$ is exactly the algebra of differential operators on $X$ generated by $D$. The study of representations of Rota-Baxter algebras to be defined below also has this geometric connection as motivation.

\subsubsection{Rota-Baxter modules}
For a given $\lambda\in \bfk$, a {\bf Rota-Baxter algebra of weight $\lambda$} is defined to be a pair $(R,P)$ with $R$ a $\bfk$-algebra and $P$ a linear operator on $R$ satisfying the {\bf Rota-Baxter axiom}:
\begin{equation}
P(x)P(y)= P(xP(y))+P(P(x)y)+\lambda P(xy) \quad \text{for all } x,y\in R.
\mlabel{eq:rba}
\end{equation}
We will often simply denote $(R,P)$ by $R$ if the operator $P$ is understood from the context. See~\mcite{Ba,Gub,KRY,Rot3} for general discussions of Rota-Baxter algebras.

A homomorphism $\sigma: (R, P)\rightarrow (R', P')$ of Rota-Baxter algebras of the same weight $\lambda$  is a homomorphism $\sigma: R\rightarrow R' $ of $ \bfk$-algebras such that
$P'\circ \sigma=\sigma\circ P$.
We note that if $(R, P)$ is a Rota-Baxter algebra of weight $\lambda$, then $(R, \alpha P)$ is  a Rota-Baxter algebra  of weight $ \alpha \lambda$.

\begin{defn}
Let $(R,P)$ be a Rota-Baxter algebra of weight $ \lambda \in \bfk$. \begin{enumerate}
\item
A (left) {\bf Rota-Baxter module over $(R,P)$} or simply a (left) {\bf $(R,P)$-module} is a pair $(M, p)$ with an $R$-module $M$ and  a linear map $p:M\to M$ satisfying
\begin{equation}
P(a)p(x)=p(ap(x))+p(P(a)x)+\lambda p(ax) \quad \text{for all } a\in R,\ x\in M.
\mlabel{eq:rbm}
\end{equation}
We will simply write $M$ for the pair $(M,p)$ when $p$ is understood.
\item
Let $(M,p_M)$ and $ (N, p_N)$ be two $(R, P)$-modules. A homomorphism $f: (M, p_M)\rightarrow (N, p_N)$ of Rota-Baxter modules is a homomorphism $ f: M\rightarrow N$ of $R$-modules such that $ f\circ p_M=p_N \circ f$. Denote $\Hom_{(R, P)}(M, N)$ for the set of all $(R,P)$-module homomorphisms, which is a $\bfk$-submodule of $\Hom_{R}(M,N)$, the $\bfk$-module of all $R$-module homomorphisms.
\item For an $(R,P)$-module $(M,p)$, an {\bf $(R,P)$-submodule} is an $R$-submodule $N$ of $M$ such that $p(N)\subseteq N$. Thus $(N, p|_{N})$ is also an $(R,P)$-module.
\end{enumerate}
\end{defn}

\begin{remark}
This definition of Rota-Baxter module is consistent with the Eilenberg's approach to the definition of module, namely the semidirect sum $(R\oplus M, P+p)$ is a Rota-Baxter algebra. Moreover, its quotient by the Rota-Baxter ideal $(M,p)$ is isomorphic to the initial Rota-Baxter algebra $(R,P)$.
\end{remark}

We also remark that Eq.~\eqref{eq:rbm} is compatible with the Rota-Baxter equation \eqref{eq:rba}, i.e.,
\begin{equation}\label{eq:associativity}
P(a)(P(b)p(x))=(P(a)P(b))p(x)=P(P(a)b+aP(b)+\lambda ab)p(x)
\end{equation}
for all $a,b \in R$ and $ x\in M$.  The verification of this is the same as verifying that Eq~\eqref{eq:rba} is compatible with the associativity in $R$, in the sense that applying the associativity $(P(a)P(b))P(c)=P(a)(P(b)P(c))$ to the left hand side of Eq.~\eqref{eq:rba} leads to the identical expression on the right hand side.

For a Rota-Baxter module homomorphism $f$, it is straightforward to verify that the $R$-modules $\ker(f)$, $\im(f)$ and $\cok(f)$ are $(R,P)$-submodules with the obvious operators induced from $p$. The category $(R,P)\Mod$ of $(R,P)$-modules is an abelian category. There is a  forgetful functor $(R,P)\Mod\rightarrow R\Mod$ forgetting the operator $p$, which is exact and faithful.

Given an $R$-module $M$ there could be many $\bf k$-linear operators $p$ making $(M,p)$ a $(R,P)$-module. Let $\on{RB}_P(M)\subseteq \End_{\bf k}(M)$, or simply $\on{RB}(M)$, denote the set of all such operators.
The $R$-module automorphism group $\on{Aut}_R(M)$ acts on the set $\on{RB}_P(M)$ by conjugations.  Two $\bf k$-linear operators $p$ and $p'$ on $M$ define isomorphic $(R,P)$-modules if and only if they are in the same orbit of the action. Depending on the ring $R$ and the $R$-module $M$, $\on{Aut}_R(M)$ is an algebraic group and $\on{RB}_{P}(M)$ is an algebraic variety. One of the question is to describe the moduli space of the isomorphism classes of $(R, P)$-module structures on $M$ in terms of the algebraic group  $\on{Aut}_R(M)$ on $\on{RB}(M)$. When $ R=\bfk((t))=\bfk[[t]]\oplus t^{-1}\bfk[t^{-1}]$ is the Laurent series field with $P$ being the projection to $\bfk[[t]]$, and $M$ being finite dimensional (over $R$), the moduli spaces are studied in \cite{LQ} and are closed related to the affine Grassmannian corresponding the $t$-adic group $ GL_n(\bfk[[t]])$.

Similarly, one define {\bf right $(R,P)$-modules}. In particular, $(R, P)$ is a left (resp. right) $(R,P)$-module under the left (resp. right) multiplication.

A left, right or two sided ideal $I$ of $R$ is called a left, right or two sided {\bf Rota-Baxter ideal}, respectively, if $P(I)\subseteq I$.
As in the case of usual module theory, any left or right Rota-Baxter ideal $I$ of $(R,P)$ is a left or right Rota-Baxter $(R,P)$-module under the restriction $P: I\rightarrow I$.

We remark that any $R$-module $M$ automatically defines an $(R,P)$-module $(M, 0)$. This defines a full  subcategory  $R\Mod$ of $(R,P)\Mod$ whose composition with the forgetful functor $(R,P)\Mod\rightarrow R\Mod$ is the identity.  We will see this from the perspective of the ring of Rota-Baxter operators in Corollary~\mref{co:subcat}.

\subsection{Regular-singular decompositions of Rota-Baxter modules}
\mlabel{ss:decomp}

A $\bfk$-linear operator $p$ on a module $M$ is called {\bf quasi-idempotent of weight $0\neq \lambda\in \bfk$} if $p^2+\lambda p=0$. The usual concept of an idempotent operator is the special case when $\lambda=-1$.
For $\mu\in \bfk$, let
\begin{equation}
M_\mu\colon = \{x\in M\,|\, p(x)=\mu x\}
\mlabel{eq:eigen}
\end{equation}
denote the eigenspace of $M$ for the eigenvalue $\mu$.
A Rota-Baxter operator $P$ of weight $\lambda$ in a $\bfk$-algebra $R$  is called {\bf quasi-idempotent}~\cite{AM} if $P^2+\lambda P=0$.

\begin{prop}
Let $\lambda\in \bfk$ be given.
\begin{enumerate}
\item
Assume that $\lambda$ is invertible. A linear operator $p$ on a $\bfk$-module $M$ is quasi-idempotent of weight $\lambda$ if and only if there is a $\bfk$-module  $M=M'\oplus M''$ such that $p$ is the $-\lambda$ multiple of the projection from $M$ to $M'$ along $M''$:
\begin{equation}
p: M=M'\oplus M''\to M, \quad x=x'+x'' \mapsto -\lambda x'
\text{ for all }  x'\in M', x''\in M''.
\mlabel{eq:rbdecomp}
\end{equation}
If either of the two equivalent conditions holds, then $M'=M_{-\lambda}$ and $M''=M_0$.
\mlabel{it:rbdec0}
\item
Let $(R,P)$ be a Rota-Baxter algebra and $(M,p)$ a $(R,P)$-module. The Rota-Baxter operator $p$ is quasi-idempotent if and only if $p$ is $P(1)$-invariant in the sense that $p(P(1)x)=P(1)p(x)$ for all $x\in M$.
In the case when $(M,p)=(R,P)$, $P$ is quasi-idempotent if and only if $P$ is right $P(1)$-invariant in the sense that $P(uP(1))=P(u)P(1)$ for all $u\in R$.
\mlabel{it:rbdec1}
\item
Assume that $\lambda$ is invertible. A linear operator $P$ on an algebra $R$ is a quasi-idempotent Rota-Baxter operator of weight $\lambda$ if and only if there is a $\bfk$-module decomposition $R=R'\oplus R''$ of $R$ into nonunitary subalgebras $R'$ and $R''$ such that $P$ is the $-\lambda$ multiple of the projection of $R$ to $R'$ along $R''$ as in Eq.~\eqref{eq:rbdecomp}\footnote{So $P$ is called a splitting Rota-Baxter operator in~\cite{BGP}}.
If either of the two equivalent conditions holds, then $R'=R_{-\lambda}$ and $R''=R_0$.
\mlabel{it:rbdec2}
\item
Assume that $\lambda$ is invertible. Let $P:R\to R$ be a quasi-idempotent Rota-Baxter operator of weight $\lambda$ and $R=R_{-\lambda}\oplus R_0$ be as in Item~$($\mref{it:rbdec2}$)$. Let $M$ be an $R$-module and $p:M\to M$ an quasi-idempotent $\bfk$-linear operator. Then $(M,p)$ is a $(R,P)$-module if and only the eigenspace $M_{-\lambda}$ (resp. $M_0$) from Item~(\mref{it:rbdec0}) is a $R_{-\lambda}$-module (resp. $R_0$-module).
\mlabel{it:rbdec3}
\end{enumerate}
\mlabel{pp:rbdecomp}
\end{prop}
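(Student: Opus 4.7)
The plan is to prove the four items in the order stated, since each leans on the previous. Items (a) and (b) are essentially linear algebra and direct substitutions, while (c) and (d) are consequences of (a) combined with termwise verification of the Rota-Baxter (module) axioms on decomposed elements. Throughout I assume $\lambda\in\bfk$ is invertible wherever it matters.

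For item (a), I would set $e:=-\lambda^{-1}p$ and check $e^2=e$ iff $p^2+\lambda p=0$, then invoke the standard decomposition $M=eM\oplus(1-e)M=\ker(1-e)\oplus\ker(e)$ associated to an idempotent. Direct inspection identifies $eM=M_{-\lambda}$ and $(1-e)M=M_0$, and makes $p=-\lambda e$ the indicated multiple of the projection. The converse is immediate. Item (b) is then an application of the Rota-Baxter module axiom \eqref{eq:rbm} at $a=1$, which produces
\begin{equation*}
P(1)p(x)-p(P(1)x)=p^2(x)+\lambda p(x),
\end{equation*}
so $P(1)$-invariance of $p$ and quasi-idempotence of $p$ are tautologically equivalent. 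For the specialization $(M,p)=(R,P)$, substituting $y=1$ in the RB axiom \eqref{eq:rba} gives $P(u)P(1)-P(uP(1))=P^2(u)+\lambda P(u)$, and the same conclusion follows.

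For item (c), the forward direction uses (a) to write $R=R_{-\lambda}\oplus R_0$ with $P$ acting as $-\lambda$ times projection onto $R_{-\lambda}$, and then tests the RB axiom on pairs drawn from a single eigenspace. For $x,y\in R_{-\lambda}$ the three summands on the right-hand side collapse to $-\lambda P(xy)$, forcing $P(xy)=-\lambda xy$ and hence $xy\in R_{-\lambda}$; for $x,y\in R_0$ the right-hand side reduces to $\lambda P(xy)$, forcing $xy\in R_0$. The backward direction requires verifying the RB axiom for arbitrary $x=x'+x''$, $y=y'+y''$: using that $R'$ and $R''$ are nonunitary subalgebras, the pure terms $P(x'y')=-\lambda x'y'$ and $P(x''y'')=0$ are immediate, while the cross products $x'y''$ and $x''y'$ must be decomposed into their $R'$ and $R''$ components; tracking these pieces across $P(xP(y))$, $P(P(x)y)$, and $\lambda P(xy)$ shows that the cross-term contributions cancel pairwise, leaving the equality $P(x)P(y)=\lambda^2 x'y'$.

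For item (d), the forward direction specializes \eqref{eq:rbm} to pure eigenvectors: $a\in R_{-\lambda},x\in M_{-\lambda}$ yields $p(ax)=-\lambda ax$ (so $ax\in M_{-\lambda}$), and $a\in R_0,x\in M_0$ yields $p(ax)=0$ (so $ax\in M_0$). The backward direction parallels the backward direction of (c): decompose $a=a'+a''$ and $x=x'+x''$, expand the RB module axiom, use the $R_{-\lambda}$-module and $R_0$-module hypotheses to dispose of the pure pieces $a'x'$ and $a''x''$, and verify that the contributions of the cross terms $a'x''$ and $a''x'$ (whose $M_{-\lambda}$/$M_0$ decompositions are a priori uncontrolled) cancel among the three right-hand-side summands. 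The main obstacle in the whole proof is precisely this bookkeeping in the two backward directions: the cross products do not a priori lie in either summand, and one must exhibit the cancellation explicitly rather than relying on closure. Once this is done, both converses reduce to the tautology $\lambda^2 x'y'=\lambda^2 x'y'$ (respectively $\lambda^2 a'x'=\lambda^2 a'x'$).
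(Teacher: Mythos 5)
Your proposal is correct and follows essentially the same route as the paper: the idempotent $e=-\lambda^{-1}p$ for item (a), the substitutions $a=1$ and $y=1$ into the defining identities for item (b), and the eigenvector-by-eigenvector verification in which the uncontrolled cross terms ($R_{-\lambda}M_0$ and $R_0M_{-\lambda}$, resp.\ $R'R''$ and $R''R'$) cancel between $P(xP(y))$ and $\lambda P(xy)$ --- which is exactly the paper's case analysis for item (d). The only divergence is that for item (c) the paper simply cites \cite{BGP}, whereas you carry out the four-case verification directly; your verification is correct and mirrors the paper's own argument for item (d).
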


\begin{proof}
(\mref{it:rbdec0}) This is a standard linear algebra exercise using the minimal polynomial $ p(p+\lambda)=0$ and the fact that $\lambda$ is invertible to get $ M=M_{-\lambda}\oplus M_0$.
\smallskip

\noindent
(\mref{it:rbdec1})
The first equivalence follows from
$$
P(1)p(x)= p(P(1)x)+p(p(x))+\lambda p(x) \quad \text{ for all } x\in M.$$
The second equivalence follows from
$$P(u)P(1)=P(P(u))+P(uP(1))+\lambda P(u) \quad \text{ for all } u\in R.$$

\noindent
(\mref{it:rbdec2}) This fact is already proved in~\mcite{BGP}.
\smallskip

\noindent
(\mref{it:rbdec3})
Let $(R,P)$ and $(M,p)$ be as given.  Then by Item~(\mref{it:rbdec0}), $M=M_{-\lambda}\oplus M_0$ and $p$ is the $-\lambda$ multiple of the projection from $M$ to $M_{-\lambda}$ along $M_0$. Suppose $(M,p)$ is a $(R,P)$-module. By Eq.~\eqref{eq:rbm},  $M_{-\lambda} = p(M)$ is a $R_{-\lambda}(=P(R))$-module. Further, let $r\in R_0=\ker P$ and $x\in M_0=\ker p$. Then Eq.~\eqref{eq:rbm} gives
$$\lambda p(rx)=P(r)p(x) - p(P(r)x)-p(rp(x))=0.$$
Thus $M_0$ is a $R_0$-module.

Conversely, suppose the eigenspace $M_{-\lambda}$ (resp. $M_0$) from Item~(\mref{it:rbdec0}) is a $R_{-\lambda}$-module (resp. $R_0$-module). We consider two cases in verifying Eq.~\eqref{eq:rbm}.

First consider $u\in R_{-\lambda}$. Then we have
$$ P(u)p(x)= -\lambda up(x) = \left\{ \begin{array}{ll} 0, & v\in M_0, \\
\lambda^2 ux, & v\in M_{-\lambda}. \end{array} \right .
$$
On the other hand, $P(u)=-\lambda u$ gives
$$ p(up(x))+p(P(u)x)+\lambda p(ux) =p(up(x)) =
\left \{\begin{array}{ll}
0, & x\in M_0, \\
p(u(-\lambda)x) = -\lambda p(ux) =-\lambda (-\lambda)ux, & x\in M_{-\lambda}. \end{array} \right .
$$
Here in the last case we applied the assumption that $M_{-\lambda}$ is a $R_{-\lambda}$-module. Thus Eq.~\eqref{eq:rbm} holds in this case.

Next consider $u\in R_0$. Then we have
$ P(u)p(x)=0$. On the other hand,
$$ p(up(x))+p(P(u)x)+\lambda p(ux) = p(up(x))+\lambda p(ux)
=\left\{\begin{array}{ll} 0, & x\in M_0, \\
p(u(-\lambda x))+\lambda p(ux) =0, & x\in M_{-\lambda}. \end{array} \right .
$$
Here in the first case, we have applied the property that $M_0$ is a $R_0$-module. Thus $(M,p)$ is a $(R,P)$-module.
\end{proof}

As an immediate consequence of Proposition~\mref{pp:rbdecomp}.(\mref{it:rbdec1}), as well as \cite[Lemma~1]{BGP}, we have

\begin{coro}
Let $P:R\to R$ be a Rota-Baxter operator satisfying $P(1)\in \bfk$. Then $P$ is quasi-idempotent. Further for any $(R,P)$-module $(M,p)$, $p$ is also quasi-idempotent.
\mlabel{co:rbdec}
\end{coro}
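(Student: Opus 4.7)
The plan is to read the corollary as an immediate consequence of the two characterizations in Proposition~\ref{pp:rbdecomp}.(\ref{it:rbdec1}), namely

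\[
P\text{ quasi-idempotent} \iff P(uP(1)) = P(u)P(1)\ \forall u\in R,
\]
\[
p\text{ quasi-idempotent} \iff p(P(1)x) = P(1)p(x)\ \forall x\in M.
\]

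Both of these invariance conditions trivialize when $P(1)$ happens to lie in the base ring $\bfk$, because every $\bfk$-linear map commutes with multiplication by scalars in $\bfk$.

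Concretely, set $c := P(1) \in \bfk$. For the first statement, I would compute, for any $u \in R$,
\[
P(uP(1)) = P(uc) = cP(u) = P(u)c = P(u)P(1),
\]
where the middle equality uses only the $\bfk$-linearity of $P$. Hence $P$ is right $P(1)$-invariant, and Proposition~\ref{pp:rbdecomp}.(\ref{it:rbdec1}) (second equivalence) yields $P^2 + \lambda P = 0$.

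For the module statement, I would likewise observe that for any $x \in M$,
\[
p(P(1)x) = p(cx) = cp(x) = P(1)p(x),
\]
using only the $\bfk$-linearity of $p$. Then Proposition~\ref{pp:rbdecomp}.(\ref{it:rbdec1}) (first equivalence) gives $p^2 + \lambda p = 0$. There is no real obstacle here: the content of the corollary is entirely packaged in the equivalences of Proposition~\ref{pp:rbdecomp}.(\ref{it:rbdec1}), and the hypothesis $P(1)\in\bfk$ is precisely what is needed to make the required invariance hold for free via $\bfk$-linearity.
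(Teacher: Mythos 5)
Your argument is correct and is exactly the route the paper intends: the paper derives this corollary as an immediate consequence of Proposition~\ref{pp:rbdecomp}.(\ref{it:rbdec1}), and the only content to supply is that $P(1)\in\bfk$ makes the two invariance conditions hold automatically by $\bfk$-linearity, which is precisely what you verify. Nothing is missing.
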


The decomposition $M=M_{-\lambda} \oplus M_0$ in the proposition will be called the {\bf regular-singular decomposition}, motivated by the following example. A more detailed study in this case can be found in~\mcite{LQ}.

\begin{exam} Let $X$ be a complex manifold and $x_0\in X$ be a fixed point. Let $ \calO_{x_0}$ and $\calM_{x_0}$ be the stalks at $ x_0$, of the sheaves $\calO$ and $\calM$ of holomorphic functions  and meromorphic functions respectively.  We know that $\calO_{x_0}$ is a $\CC$-subalgebra of $\calM_{x_0}$. Any linear map $P: \calM_{x_0}\rightarrow \calO_{x_0}\subseteq \calM_{x_0}$ satisfying the Rota-Baxter relation of weight $-1$ such that $ P|_{\calO_{x_0}}=\on{Id_{\calO_{x_0}}}$ defines a regular-singular decomposition $\calM_{x_0}=(\calM_{x_0})_0\oplus \calO_{x_0}$.  One can define an integration theory by taking $\int f:=\int_{x_0}^xP(f)dz $ for all $f\in \calM_{x_0}$.  Thus we can regard the Rota-Baxter algebra $(\calM_{x_0}, P)$ as renormalization in taking a function $f\in \calM_{X_0}$ to get a regular function $ P(f)\in \calO_{x_0}$.
For a sheaf $\calF$ of $\calM$-module, the stalk $\calF_{x_0}$ is an $\calM_{x_0}$-module. The set of sections with singularity at $x_0$  of the sheaf $\calF$   is not a vector subspace.  A Rota-Baxter $(\calM_{x_0}, P)$-module structure $p: \calF_{x_0}\rightarrow \calF_{x_0}$, defines a $\CC$-vector space decomposition $  \calF_{x_0}= (\calF_{x_0})_{-1}\oplus  (\calF_{x_0})_{0}$ with $(\calF_{x_0})_{-1}$ the stalk of sheaf of regular sections and $(\calF_{x_0})_{0}$ be the stalk of sheaf of singular sections.
\mlabel{ex:var}
 \end{exam}

 \begin{exam} Let $C$ be a smooth complex curve and $x_0\in C$.  Let $ \calO_{x_0}$ be the complete  local ring of holomorphic functions at $x_0$ and $ \calM_{x_0}$ be the field of quotients.  Each choice of coordinate $z$ defines a Rota-Baxter algebra structure on $R=\calM_{x_0}$ with $ P(R)=\calO_{x_0}$. For each vector bundle $ \calF$ on $C$, each trivialization of $\calF$ at $x_0$ defines an  $(R, P)$-module structure on $ \calF_{x_0}= \calM_{x_0}\otimes _{\calO_{x_0}}\calF$.  A classification of such module structures on the sheaf $ \calF_{x_0}$ is discussed in \cite{LQ}. Vector bundles on curve with  trivialization at a point was studied in \cite{BL} to describe the conformal blocks in mathematical physics.
 \end{exam}

Suppose that $(R,P)$ is a Rota-Baxter algebra of invertible weight $\lambda\in \bfk$ and $ P(1)\in \bfk $. By Proposition~\mref{pp:rbdecomp}, $(R,P)$ is a quasi-idempotent Rota-Baxter algebra and by Corollary~\mref{co:rbdec}, any $(R,P)$-module $(M,p)$ is quasi-idempotent, giving the regular-singular decomposition $M=M_{-\lambda}\oplus M_0$. Let $\RSD(M)$ denote the set of all regular-singular decompositions $M=M_{-\lambda}\oplus M_0$ as $p$ vary. Then the assignment
$$ p\mapsto (p(M), \ker(p))$$
defines a bijection from $\RB(M)$ to $\RSD(M)$.
Further $ \Aut_R(M)$ acts on $\RSD(M)$ by
$ g(M_1, M_0)=(g(M_1), g(M_0))$. Then the bijection from $\RB(M)$ to $\RSD(M)$ is $\Aut_R(M)$-equivariant. Thus the set of isomorphic classes of $(R,P)$-module structures on $M$ is in bijection with the set of $\Aut_R(M)$-orbits in $ \RSD(M)$. In summary,

\begin{prop} Let $(R,P)$ be a Rota-Baxter algebra of invertible weight $\lambda\in \bfk$ and $ P(1)\in \bfk $. Then for any $R$-module $M$, the map $ \RB(M)\rightarrow \RSD(M)$ defined by $(M,p)\mapsto M_{-\lambda}\oplus M_0$ is an $\Aut_R(M)$-equivariant bijection.
\end{prop}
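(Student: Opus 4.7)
The plan is to deduce the result essentially as a corollary of Proposition~\mref{pp:rbdecomp} and Corollary~\mref{co:rbdec}, plus a short equivariance check. Under the hypothesis $P(1)\in\bfk$, Corollary~\mref{co:rbdec} says every $p\in\RB(M)$ is quasi-idempotent, so Proposition~\mref{pp:rbdecomp}(\mref{it:rbdec0}) produces the decomposition $M=M_{-\lambda}\oplus M_0$ with $M_{-\lambda}=p(M)$ and $M_0=\ker p$. This shows the assignment is well defined; surjectivity onto $\RSD(M)$ is automatic from the definition of $\RSD(M)$ as the collection of all decompositions of $M$ arising from some $p$.

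For injectivity, suppose $p_1,p_2\in\RB(M)$ give the same pair $(M_{-\lambda},M_0)$. By the ``decomposition implies operator'' direction in Proposition~\mref{pp:rbdecomp}(\mref{it:rbdec0}), each $p_i$ must coincide with $-\lambda$ times the projection onto $M_{-\lambda}$ along $M_0$, so $p_1=p_2$. This also pins down the explicit inverse: to $(M_{-\lambda},M_0)\in\RSD(M)$ one assigns the operator equal to $-\lambda$ times this projection, which is quasi-idempotent by Proposition~\mref{pp:rbdecomp}(\mref{it:rbdec0}) and lies in $\RB(M)$ either directly by membership in $\RSD(M)$ or via Proposition~\mref{pp:rbdecomp}(\mref{it:rbdec3}).

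For equivariance, the group $\Aut_R(M)$ acts on $\RB(M)$ by conjugation; one checks that $gpg^{-1}\in\RB(M)$ whenever $p\in\RB(M)$ by transporting Eq.~\eqref{eq:rbm} along the $R$-module isomorphism $g$. One then computes
\[
(gpg^{-1})(M)=g(p(M))=g(M_{-\lambda}),\qquad \ker(gpg^{-1})=g(\ker p)=g(M_0),
\]
which matches the prescribed $\Aut_R(M)$-action on $\RSD(M)$. The entire argument is essentially formal once Proposition~\mref{pp:rbdecomp} is in hand; the only step requiring any care is the invocation of $P(1)\in\bfk$ to trigger Corollary~\mref{co:rbdec}, which is what guarantees quasi-idempotency of every $p\in\RB(M)$ and hence the existence of the regular-singular decomposition in the first place.
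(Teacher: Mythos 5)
Your proposal is correct and follows essentially the same route as the paper, which likewise derives the bijection from Corollary~\mref{co:rbdec} and Proposition~\mref{pp:rbdecomp}.(\mref{it:rbdec0}) (the operator being recovered as $-\lambda$ times the projection onto $M_{-\lambda}$ along $M_0$, with surjectivity built into the definition of $\RSD(M)$) and checks equivariance by the same conjugation computation. No gaps.
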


As an application of Proposition~\mref{pp:rbdecomp}, we give a simple example to demonstrate the distinction between modules over an algebra and modules over a Rota-Baxter algebra. Take $R=\bfk$ to be a field and fix $\lambda\in \bfk$. Then $(\bfk,-\lambda)$ is a Rota-Baxter algebra of weight $\lambda$. We determine the category of finite dimensional $(R,P)$-modules. Such a module is necessarily a finite dimensional $\bfk$-module, so is of the form $M=\bfk^n$ for $n\geq 0$.

First let $\lambda\neq 0$. Since trivially $P(1)\in \bfk$, by Proposition~\mref{pp:rbdecomp}, a linear map $p:\bfk^n\to \bfk^n$ defines a $(R,P)$-module structure on $\bfk^n$ means $p$ is diagonalizable over $\bfk$ with eigenvalues $0$ and $-\lambda$. Thus the category of $(R,P)$-modules is semisimple with exactly two irreducible representations $(\bfk,0)$ and $(\bfk,-\lambda)$.

In the ``limit" case of the Rota-Baxter algebra $(R,P)=(\bfk,-\lambda)$ of weight $\lambda$ when $\lambda=0$, a pair $(M,p)$ is a $(R,P)$-module if and only if $p^2=0$. Thus the category of $(R,P)$-modules is not semisimple with irreducible representation $(\bfk,0)$ as in the nonzero weight case. Instead, the category has two indecomposable representations $(\bfk,0)$ and $(\bfk^2,J_2)$ where $J_2$ is a Jordan block of size 2 with eigenvalue $0$.

Through the above discussion, we see that the category of $ (\bfk,-\lambda)$-modules of weight $\lambda$ is equivalent to the category of $A=\bfk[t]/\langle t^2+\lambda t\rangle$-modules.  A special case of such an algebra $A$ is the Hecke algebra of the symmetric group $S_2$ over a field $\bfk$ which is generated by $T$ subject to the condition $ (T-q)(T+1)=0$ with $ q\in\bfk$. In this case we take $ P=T-q$ and $ \lambda=1+q$.

\begin{remark}
There is a subtle point that is worth noting, namely the pair $(\bfk,0)$ is a Rota-Baxter algebra of weight $-1$ as well as of weight $0$ just discussed. Representation of this Rota-Baxter algebra depends on its designated weight. In fact, as a Rota-Baxter algebra of weight $-1$, its category of modules is semisimple by Proposition~\mref{pp:rbdecomp} by the same argument as above.
\mlabel{rk:zero}
\end{remark}

\subsection{Dual modules and derived modules of Rota-Baxter modules}
We study the relationship of the adjoint operator $\tilde{P}$ of $P$ and the Atkinson factorization with the Rota-Baxter modules.

Recall that for any Rota-Baxter algebra $(R, P)$ of weight $\lambda$, the pair $(R, \tilde{P})$, with $\tilde{P}:=-\lambda\on{I}_R-P$, is also a Rota-Baxter algebra of the same weight $\lambda$. In the same way, if $(M, p)$ is an $(R,P)$-module, then $(M, \tilde{p})$ is an $(R, \tilde{P})$-module, where $\tilde{p}:=-\lambda\on{I}_M-p$.
Furthermore, if $f:(M,p_M)\rightarrow (N,p_N)$ is an $(R,P)$-module homomorphism, then the same $R$-module homomorphism $f: M\rightarrow N$ is an $(R, \tilde{P})$-module homomorphism from $(M,\tilde{p}_M)$ to $(N,\tilde{p}_N)$. We thus obtain the following

\begin{lemma} The assignment $ (M,p)\mapsto (M, \tilde{p})$ for $(M,p)\in (R,P)\Mod$ defines a categorical isomorphism $(R,P)\Mod\rightarrow (R, \tilde{P})\Mod$.
\end{lemma}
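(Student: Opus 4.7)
The plan is to realize the assignment as a functor $F\colon (R,P)\Mod \to (R,\tilde{P})\Mod$ and show that it is strictly involutive, which forces it to be a categorical isomorphism rather than merely an equivalence. Define $F$ on objects by $F(M,p) := (M, \tilde{p})$ with $\tilde{p} = -\lambda\on{I}_M - p$, and on morphisms by $F(f) := f$ (same underlying $R$-linear map). Three items must be verified: (i) $(M,\tilde{p})$ really is an $(R,\tilde{P})$-module, (ii) $F$ is well-defined and functorial on morphisms, and (iii) $F$ is its own two-sided inverse.

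The one substantive step is (i). I would substitute $\tilde{P}(a) = -\lambda a - P(a)$ and $\tilde{p}(x) = -\lambda x - p(x)$ into the Rota-Baxter module identity~\eqref{eq:rbm} required for $(M,\tilde{p})$ over $(R,\tilde{P})$, expand both sides, and collect terms by powers of $\lambda$. After routine cancellation the $\lambda^2$ and $\lambda$ terms match, and the residual identity collapses to exactly the original axiom~\eqref{eq:rbm} for $(M,p)$ over $(R,P)$. This mirrors the bookkeeping that already shows $(R,\tilde{P})$ is a Rota-Baxter algebra; no new idea is needed, but both the algebra operator and the module operator are simultaneously replaced by their adjoints, so the expansion is slightly longer. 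This is the only place where any real computation is required and is the main (though modest) obstacle.

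Step (ii) is immediate: if $f\colon (M,p_M)\to (N,p_N)$ satisfies $f\circ p_M = p_N\circ f$, then by $\bfk$-linearity
\[
f\circ \tilde{p}_M = -\lambda f - f\circ p_M = -\lambda f - p_N\circ f = \tilde{p}_N\circ f,
\]
and $F$ preserves identities and compositions because it does not modify the underlying map. For step (iii), observe that $\tilde{\tilde{p}} = -\lambda\on{I}_M - \tilde{p} = p$ and $\tilde{\tilde{P}} = P$, so the symmetric functor $G\colon (R,\tilde{P})\Mod \to (R,P)\Mod$ defined by the same recipe satisfies $G\circ F = \on{Id}_{(R,P)\Mod}$ and $F\circ G = \on{Id}_{(R,\tilde{P})\Mod}$ on the nose, delivering the claimed categorical isomorphism.
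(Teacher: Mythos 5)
Your proposal is correct and follows essentially the same route as the paper: the paper states just before the lemma that $(M,\tilde{p})$ is an $(R,\tilde{P})$-module and that morphisms are preserved, and its proof consists precisely of your step (iii), namely $\tilde{\tilde{P}}=P$ and $\tilde{\tilde{p}}=p$. Your expansion in step (i) does check out (the $\lambda^2$ and $\lambda$ terms cancel, leaving Eq.~\eqref{eq:rbm}), so you have simply supplied details the paper leaves implicit.
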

\begin{proof}
The resulting functor is an isomorphism since $\tilde{\tilde{P}}=P$ and $\tilde{\tilde{p}}=p.$
\end{proof}

When we use $R$ to denote the Rota-Baxter algebra $(R,P)$, we will also simply use $ \tilde{R}$ to denote the Rota-Baxter algebra $(R, \tilde{P})$. Similarly, when we abbreviate $M$ for the module $(M,p)$, then $\tilde{M}$ will denote the module $(M, \tilde{p})$.

Suppose that $ (R,P)$ is a Rota-Baxter algebra of invertible weight $ \lambda$ and $ P(1)\in\bfk$. Then  $\tilde{P}(1)= -\lambda-P(1) \in \bfk$.
In this case, by Proposition~\mref{pp:rbdecomp} we have $ R=R_{-\lambda}\oplus R_0$ and any
$(R,P)$-module is an $R$-module with a decomposition $M=M_{-\lambda}\oplus M_{0}$ such that
$ R_{-\lambda}M_{-\lambda}\subseteq M_{-\lambda}$ and
$ R_0M_{0}\subseteq M_0$. We denote
$ R=\tilde{R}_{-\lambda}\oplus \tilde{R}_{0} $ for the decomposition of $R$ with respect to $\tilde{P}$.  Similarly for any $(R,P)$-module $(M,p)$, the pair $(M,\tilde{p})$ is an $(R, \tilde{P})$-module with decomposition $ M=\tilde{M}_{-\lambda}\oplus \tilde{M}_{0}$. Then we have
$$\tilde{R}_{-\lambda}=R_0, \quad \tilde{R}_{0}=R_{-\lambda}, \quad \tilde{M}_{-\lambda}=M_0, \quad \tilde{M}_{0}=M_{-\lambda}.$$

Recall~\cite[Thm 1.1.17]{Gub} that, for any Rota-Baxter algebra $(R, P)$, there is a new associative multiplication on $R$ defined by
\begin{equation}\label{eq:star_prod} r_1\star_P r_2:=r_1P(r_2)+P(r_1)r_2+\lambda r_1r_2
\end{equation}
making $(R, \star_P)$ into a nonunitary associative $\bfk$-algebra.
We will denote this $\bfk$-algebra by $ R^{(P)}$. Furthermore $(R^{(P)}, P)$
is still a Rota-Baxter algebra of weight $\lambda$ and
$ P: (R^{(P)}, P)\rightarrow (R,P)$ is a homomorphism of
Rota-Baxter algebras.

Now let $(M,p)$ be an $(R, P)$-module. we define a new linear map
$$\star_p: R\otimes M\rightarrow M,  r\otimes x\mapsto P(r)x+rp(x)+\lambda r x \quad
\text{for all } r\in R, x\in M.$$
The same argument as in the proof of
\cite[Thm. 1.1.17]{Gub} shows

\begin{prop} Let $(R,P)$ be a Rota-Baxter algebra and $(M,p)$ a $(R,P)$-module.
\begin{enumerate}
\item
$p: M^{(p)}\rightarrow M$ is $P$-semi-linear, i.e.,
\[ p(r\star_p x)=P(r)p(x) \quad \text{for all } r\in R, x\in M;\]
\item
$(M, \star_p)$ is a nonunitary $R^{(P)}$-module. We will denote this $R^{(P)}$-module by $ M^{(p)}$;
\item
$(M^{(p)}, p)$ is a Rota-Baxter  $(R^{(P)}, P)$-module;
\item
The functor from $(R,P)\Mod$ to $(R^{(P)}, P)\Mod$ defined by
$ (M, p)\mapsto (M^{(p)}, p)$ is exact.
\item
$r\star_{\tilde{P}}s =-r\star_{{P}}s$ and $ r\star_{\tilde{p}}x =-r\star_{{p}}x $ for all $(R,P)$-module $(M,p)$ with $ r, s\in R$ and $ x\in M$.
\end{enumerate}
\end{prop}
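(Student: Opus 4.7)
The plan is to verify each of the five items by direct computation, in the stated order. The Rota-Baxter module identity~\eqref{eq:rbm} plays for the module side exactly the role that the Rota-Baxter axiom~\eqref{eq:rba} plays for the algebra side, and the overall pattern parallels the proof of \cite[Thm.~1.1.17]{Gub} that $(R,\star_P)$ is associative.

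For item~(1), I would expand $p(r\star_p x)=p(P(r)x)+p(rp(x))+\lambda p(rx)$ and apply \eqref{eq:rbm} with $a=r$ to collapse the three summands to $P(r)p(x)$. For item~(2), the substantive check is associativity $(r_1\star_P r_2)\star_p x = r_1\star_p(r_2\star_p x)$: on the left I would use \eqref{eq:rba} in the form $P(r_1\star_P r_2)=P(r_1)P(r_2)$, and on the right I would use item~(1) to rewrite $p(r_2\star_p x)=P(r_2)p(x)$; after these substitutions, the $R$-module associativity on $M$ shows both sides expand to the same seven monomials in $r_1,r_2,P(r_1),P(r_2),x,p(x)$. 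For item~(3), item~(1) again provides a shortcut: $p(r\star_p p(x))=P(r)p^2(x)$, $p(P(r)\star_p x)=P^2(r)p(x)$, and $\lambda p(r\star_p x)=\lambda P(r)p(x)$, whose sum matches the direct expansion $P(r)\star_p p(x)=P^2(r)p(x)+P(r)p^2(x)+\lambda P(r)p(x)$ term-by-term.

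Item~(4) is essentially formal: the assignment leaves the underlying $\bfk$-module and the operator $p$ unchanged, so any $(R,P)$-homomorphism is automatically an $(R^{(P)},P)$-homomorphism for $\star_p$, and kernels, images, and cokernels are preserved set-theoretically together with the induced $\star_p$-action; hence the functor carries short exact sequences to short exact sequences. For item~(5) I would unfold $\tilde P=-\lambda\on{I}_R-P$ and $\tilde p=-\lambda\on{I}_M-p$ in the definitions of $\star_{\tilde P}$ and $\star_{\tilde p}$: the two $-\lambda$ contributions from $\tilde P(s)$ and $\tilde P(r)$ (resp.~from $\tilde p(x)$ and $\tilde P(r)$) produce $-2\lambda rs$ (resp.~$-2\lambda rx$), which combines with the $+\lambda rs$ (resp.~$+\lambda rx$) weight term to give $-\lambda rs$ (resp.~$-\lambda rx$), and together with $-rP(s)-P(r)s$ (resp.~$-rp(x)-P(r)x$) this is precisely $-(r\star_P s)$ (resp.~$-(r\star_p x)$). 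I anticipate no genuine obstacle; the only spot requiring care is keeping the two uses of \eqref{eq:rbm} apart in item~(2), once implicit in the very definition of $\star_p$ and once when item~(1) is invoked to simplify $p$ applied to a $\star_p$-expression.
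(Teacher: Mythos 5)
Your proposal is correct, and it follows the same route the paper intends: the paper gives no written-out proof, instead asserting that ``the same argument as in the proof of \cite[Thm.~1.1.17]{Gub}'' applies, and your computations (collapsing $p(r\star_p x)$ via Eq.~\eqref{eq:rbm} for item~(1), matching the seven monomials for associativity in item~(2), the term-by-term check in item~(3), the formal observation for item~(4), and the sign bookkeeping for item~(5)) are exactly that argument transported from the algebra to the module setting. Nothing is missing.
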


Now let $R$ be a $\bfk$-algebra. If  $\lambda\in \bfk$  is torsion free in $R$, the {\bf additive Atkinson factorization}~\mcite{At,Gub} states that Rota-Baxter operators $P$ of weight $ \lambda$ on $R$ are in one-one correspondence with $\bfk$-linear maps $ f: R\rightarrow R\oplus R$ satisfying the following properties:
if $r\in R$ with $ f(r)=(f_1(r), f_2(r))$ then $ f_1(r)+f_2(r)=-\lambda r$ and
$f(R)$ is closed under the following multiplication in $ R\oplus R$
\[ (r_1, r_2)(s_1, s_2)=(r_1s_1, -r_2s_2).\]
In fact, we can take $f_1:=P$ and $f_2:=\tilde{P}$.
The following is a module version of the additive Atkinson factorization.

\begin{theorem} Let $(R,P)$ be a Rota-Baxter algebra of weight $\lambda$ which is torsion free in $R$. Let $f=(f_1, f_2)$ be the pair of $\bfk$-linear maps corresponding to $P$ by the additive Atkinson factorization. Then an $(R,P)$-module $(M,p)$ is equivalent to a pair of $\bfk$-linear maps $ \rho=(\rho_1, \rho_2): M\rightarrow M\oplus M$ such that $\rho_1(x)+\rho_2(x)=-\lambda x $ for all $x\in M$ and $ f(R)\ast\rho(M)\subseteq \rho(M)$, where
\[ \ast: (R\oplus R)\ot (M\oplus M) \to M\oplus M, \quad (r_1, r_2)\ast(x_1, x_2)\colon=(r_1x_1, -r_2x_2) \text{ for all } r_1, r_2 \in R, x_1, x_2 \in M.\]
\end{theorem}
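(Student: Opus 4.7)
\medskip

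The plan is to mimic the bijective correspondence $P \leftrightarrow f = (P, \tilde{P})$ at the module level. Given an $(R,P)$-module $(M,p)$, set $\rho_1 := p$ and $\rho_2 := \tilde{p} = -\lambda\on{I}_M - p$, so that $\rho_1(x)+\rho_2(x)=-\lambda x$ holds tautologically. Conversely, from a pair $\rho=(\rho_1,\rho_2)$ satisfying the sum condition, set $p := \rho_1$ (which forces $\rho_2 = \tilde{p}$). The content of the theorem is then that the Rota-Baxter module axiom for $(M,p)$ is equivalent to the invariance $f(R)\ast\rho(M)\subseteq \rho(M)$. Unpacking, the invariance says: for all $r\in R$ and $x\in M$ there exists $y\in M$ with
\[
p(y) = P(r)p(x), \qquad \tilde{p}(y) = -\tilde{P}(r)\tilde{p}(x).
\]

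For the forward direction I would take the candidate $y := r\star_p x = P(r)x+rp(x)+\lambda rx$, which is natural in view of the star-product proposition just proved. Part (1) of that proposition gives $p(y)=P(r)p(x)$ on the nose. For the second equation I would exploit the symmetry $(R,P)\leftrightarrow (R,\tilde{P})$: applying the same star-product identity to $(M,\tilde{p})$ produces $\tilde{y} := \tilde{P}(r)x + r\tilde{p}(x)+\lambda rx$ with $\tilde{p}(\tilde{y}) = \tilde{P}(r)\tilde{p}(x)$. A direct substitution using $\tilde{P}+P=-\lambda\on{I}_R$ and $\tilde{p}+p=-\lambda\on{I}_M$ shows $\tilde{y}=-y$, whence $-\tilde{P}(r)\tilde{p}(x)=\tilde{p}(y)$, as required.

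For the converse, I would start from the two equations above and \emph{add} them. Since $p+\tilde{p}=-\lambda\on{I}_M$, the sum of the left-hand sides is $-\lambda y$, while expanding $\tilde{P}(r)\tilde{p}(x)=(-\lambda r - P(r))(-\lambda x - p(x))$ on the right and simplifying yields
\[
-\lambda y = -\lambda\bigl(P(r)x+rp(x)+\lambda rx\bigr).
\]
The torsion-freeness hypothesis on $\lambda$ (which I would note should be understood to propagate to $M$ via the $R$-action, in the same spirit as in the algebra version) then lets us cancel $-\lambda$ and conclude $y = P(r)x+rp(x)+\lambda rx$. Substituting this into $p(y)=P(r)p(x)$ recovers exactly the Rota-Baxter module identity \eqref{eq:rbm}.

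The main obstacle in the argument is precisely this cancellation step: solving $-\lambda y = \ldots$ for $y$ is the only place where the torsion-free assumption enters, and it parallels exactly the role it plays in the algebra-level additive Atkinson factorization. Everything else is routine bookkeeping confirming that the two constructions $\rho \mapsto (M, \rho_1)$ and $(M,p)\mapsto (p,\tilde{p})$ are mutually inverse and compatible with the sum relation $\rho_1+\rho_2=-\lambda\on{I}_M$.
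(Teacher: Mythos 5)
Your proof is correct; the paper states this theorem without any proof, and your argument is exactly the one its surrounding machinery suggests: take $\rho=(p,\tilde{p})$, use the semilinearity identity $p(r\star_p x)=P(r)p(x)$ together with the computation $\tilde{P}(r)x+r\tilde{p}(x)+\lambda rx=-(P(r)x+rp(x)+\lambda rx)$ for the forward direction, and add the two component equations and cancel $-\lambda$ for the converse. Your observation that the cancellation step requires $\lambda$ to be torsion-free in $M$, not merely in $R$, is a genuine and necessary sharpening of the hypothesis, and it is corroborated by the paper's own remark immediately following the theorem, which speaks of $\lambda$ being torsion free in $R$ \emph{and} $M$.
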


We remark that when $ \lambda$ is torsion free in $R$ and $M$, and $P(1)\in \bfk$, then $\rho: M\rightarrow \rho_1(M)\oplus \rho_2(M)$ is a linear isomorphism that gives $M=M_{-\lambda}\oplus M_0$.  Thus the regular-singular decomposition can be regarded as a special case of the additive Atkinson factorization.

Let $(R, P)$ be a Rota-Baxter algebra of weight $\lambda$. For any $ \alpha \in \bfk$,  by  Eq.~\eqref{eq:rba}, the operator $ \alpha P\in \End_{\bfk}(R)$ is  a Rota-Baxter operator  of weight $ \alpha\lambda$. Thus when $\bfk$ is a field, the problem of  classifying all Rota-Baxter algebra structure (of all weights) on $R$ is reduced to classifying all Rota-Baxter algebra structures of weight $0$ and $-1$ only.

If $(M, p)$ is an $(R, P)$-module, then the definition of Rota-Baxter modules in Eq.~\eqref{eq:rbm} implies that $(M, \alpha p)$ is an $(R, \alpha P)$-module.  If $f: (M, p_M)\rightarrow (N, p_N)$ is a homomorphism of $(R, P)$-module, then $f: (M, \alpha p_M)\rightarrow (N, \alpha p_N)$ is also a $(R, \alpha P)$-module homomorphism.
\begin{prop} For any $ \alpha \in \bfk$, the assignment $ (M, p)\mapsto (M,\alpha p)$ is a faithful functor. In particular, if $ \alpha \in \bfk$ is invertible, then this functor is a category isomorphism between $ (R, P)\Mod$ and  $ (R, \alpha P)\Mod$.
\end{prop}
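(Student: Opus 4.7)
The plan is essentially a direct verification, broken into three bite-sized pieces: functoriality on objects, functoriality on morphisms, and construction of a two-sided inverse when $\alpha$ is invertible.

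First I would check that $(M,\alpha p)$ actually is an $(R,\alpha P)$-module. Recall from the discussion preceding the statement that $(R,\alpha P)$ is a Rota--Baxter algebra of weight $\alpha\lambda$, so the axiom I must verify for $(M,\alpha p)$ is
\[
(\alpha P)(a)\,(\alpha p)(x) = (\alpha p)\bigl(a\,(\alpha p)(x)\bigr) + (\alpha p)\bigl((\alpha P)(a)\,x\bigr) + (\alpha\lambda)(\alpha p)(ax)
\]
for all $a\in R$, $x\in M$. Both sides carry an overall factor of $\alpha^{2}$, and after dividing it out the identity reduces to Eq.~\eqref{eq:rbm} for $(M,p)$ over $(R,P)$, which holds by hypothesis.

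Next I would handle morphisms. If $f\colon (M,p_M)\to (N,p_N)$ is a homomorphism in $(R,P)\Mod$, then the underlying $R$-linear map is unchanged, and the intertwining condition for the new operators is immediate:
\[
f\circ(\alpha p_M) = \alpha\,(f\circ p_M) = \alpha\,(p_N\circ f) = (\alpha p_N)\circ f.
\]
Since the assignment fixes the underlying $R$-module map, composition and identities are preserved, so we have a functor; and faithfulness is automatic because two morphisms with the same underlying $R$-linear map are already equal in $(R,P)\Mod$.

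For the second assertion, when $\alpha$ is invertible I would exhibit the functor $(N,q)\mapsto (N,\alpha^{-1}q)$ from $(R,\alpha P)\Mod$ to $(R, \alpha^{-1}(\alpha P))\Mod = (R,P)\Mod$ by the symmetric argument (with $\alpha$ replaced by $\alpha^{-1}$ and the base weight $\lambda$ replaced by $\alpha\lambda$). The two compositions act as $p\mapsto \alpha^{-1}(\alpha p)=p$ on operators and as the identity on underlying $R$-modules and on morphisms, hence they are the identity functors. There is no real obstacle; the only point requiring care is bookkeeping the weights ($(R,\alpha P)$ has weight $\alpha\lambda$, not $\lambda$) so that the axiom being verified is the correct one.
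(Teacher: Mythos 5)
Your proof is correct and matches the paper's (unstated) intent: the paper merely asserts in the paragraph preceding the proposition that Eq.~\eqref{eq:rbm} carries over to $(M,\alpha p)$ and that morphisms are preserved, and your verification supplies exactly those computations together with the evident inverse functor $(N,q)\mapsto (N,\alpha^{-1}q)$. One small logical nicety: since $\alpha$ need not be invertible (or even a non-zero-divisor) for the first assertion, you should present the axiom check as multiplying the known identity \eqref{eq:rbm} by $\alpha^{2}$ to obtain the new one, rather than ``dividing out'' $\alpha^{2}$ --- the conclusion is the same, but only that direction of the argument is valid for arbitrary $\alpha\in\bfk$.
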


Thus when $\bfk$ is a field, one can restrict to the cases of $\lambda=0, -1$.

\subsection{Product of Rota-Baxter algebras} \label{sec:RBprod}
Let $(R_1, P_1)$ and $(R_2, P_2)$ be two Rota-Baxter algebras of weight $\lambda$ over $\bfk$. The product $\bfk$-algebra $R=R_1\oplus R_2$ with componentwise multiplication together with $P=P_1\oplus P_2$ is a Rota-Baxter algebra of the same weight $\lambda$ over $\bfk$. For $i=1$ or $2$, the projective map $(R, P)\stackrel{\pi_i}{\rightarrow} (R_i, P_i)$ is a Rota-Baxter algebra homomorphism. In fact $(R, P)$ is the product of $(R_1, P_1)$ and $(R_2, P_2)$ in the category of Rota-Baxter algebras of fixed weight $\lambda$. We note that the embeddings $(R_i, P_i)\rightarrow (R, P)$ are homomorphisms of nonunitary Rota-Baxter algebras.

We now describe the $(R, P)$-modules in terms of those of $(R_i, P_i)$. Writing $e_1=(1, 0)$ and $e_2=(0,1)$ for the central idempotents in $R$, then the composition $ e_jPe_i$ is $0$ if $i\neq j$. Each $ R$-module $M$ has a decomposition $M=M_1\oplus M_2$ with $M_i=e_iM$ being an $R_i$-module. If $ (M, p)$ is an $ (R, P)$-module, then $ (M_i, p_i)$ is an $(R_i, P_i)$-module with $p_i=e_i p e_i$.  Setting $ p_{ij}:=e_{i}pe_{j}: M_j\rightarrow M_i$, then Eq.~\eqref{eq:rbm} implies that,  for $i\neq j$, $r_i\in R_i$, and $ m_j\in M_j$,
\begin{itemize} \item[(a)] $p_{ji}(r_ip_{ij}(m_j))=0$;
\item[(b)] $p_i(r_ip_{ij}(m_j))=P_i(r_i)p_{ij}(m_j)$.
\end{itemize}

Conversely, given any  $ (R_i, P_i)$-modules $(M_i, p_i)$ for $i=1,2$,
we consider the following diagram
\[ \xymatrix{ M_1\ar@/^3pt/[rr]^{p_{21}}&&\ar@/^3pt/[ll]^{p_{12}}M_2}\]
of linear maps. Such a diagram gives an $(R, P)$-module on $(M,p)$ where $M:=M_1\oplus M_2$ and $p:=(p_1+p_{21},p_{12}+p_2)$ if and only if the $\bfk$-linear maps $p_{12}$ and $p_{21}$ satisfy the conditions (a) and (b) above.

In the following we use a simple example to illustrate that determining representations of the product Rota-Baxter algebra $R=R_1\oplus R_2$ is quite non-trivial.

Let $ \bfk$ be a field and let $R_1=R_2=\bfk$ with $P_1=0$ and $ P_2=\on{Id}$. Then $(R_1, P_1)$ and $(R_2, P_2)$ are Rota-Baxter algebras of weight $-1$. Thus by Proposition~\mref{pp:rbdecomp} (see Remark~\mref{rk:zero}), the category of $ (R_i, P_i)$-modules is semisimple with two irreducible modules  $(\bfk, 1)$ and $(\bfk,0)$. Each module is a $\bfk$-vector space of the form $M=M(0)\oplus M(1)$. Here to avoid ambiguity in the subscripts, we use $M(\kappa)$ to denote the eigenspace of $M$ with eigenvalue $\kappa\in \bfk$. We will write $V=\begin{bmatrix}V(0)\\  V(1)\end{bmatrix}$.

In this case, by taking $i=1$ and $j=2$ in (b) above, we have $p_1(p_{12}(m_2))=0$, i.e., $ p_{12}(M_2)\subseteq M_1(0)$.
Taking $i=2$, (b) implies that $p_2(p_{21}(m_1))=p_{21}(m_1)$. Hence $ p_{21}(M_1)\subset M_2(1)$.

Now applying (a) we have $ p_{12}p_{21}=0$ and $ p_{21}p_{12}=0$. Thus we have the following diagram
\[ \xymatrix{ M_1(0)\ar@/^3pt/[drr]^{p_{21}}&&\ar[ll]_{p_{12}} M_2(0)\\
M_1(1)\ar[rr]_{p_{21}}&& \ar@/^3pt/[ull]^{p_{12}} M_2(1)}\]
such that any composition of the arrows is zero.
Each such diagram, regarded as a module on a quiver, contains a submodule of the form
\[ \xymatrix{ M_1(0)\ar@/^3pt/[drr]&&\ar[ll]0\\
0\ar[rr]&& \ar@/^3pt/[ull] M_2(1)}\]
with the quotient being semisimple. The submodule corresponds to exactly the representations of the preprojective algebra of the quiver $A_2$, which is an interesting subject of study \cite{CB}. Bridgeland used representations of the $\ZZ/2$-graded complexes to construct the whole quantum groups \cite{Br}.  In particular the category of $(R, P)$-module is not semisimple.

In general a $(R, P)$-module corresponds to a representation of the quiver  $Q$
\[ \xymatrix{ (2,0)\ar[r]&(1,0)\ar@/^3pt/[r] &\ar@/^3pt/[l](2,1) &\ar[l] (1,1)}.\]
Let $ \bfk Q$ be the path algebra of this quiver and $I$ be the ideal generated by all paths of length at least 2.  Then the category of $(R, P)$-modules is isomorphic to the module category of the algebra $ A=\bfk Q/I$, which has 4 irreducible modules.

\section{The ring of Rota-Baxter operators and Rota-Baxter modules}
\mlabel{sec:rbo}

We introduce the concept of a ring of Rota-Baxter operators and establish its connection with Rota-Baxter modules.
The structure theorem of this ring will be established in Section~\mref{sec:rbost}.

\subsection{Ring of Rota-Baxter operators}
Similar to the ring of differential operators, we construct the ring of Rota-Baxter operators acting on a Rota-Baxter algebra. Then the category of Rota-Baxter modules is equivalent to the category of modules over the ring of Rota-Baxter operators.

Recall that, given $ \bfk$-algebras $ A$ and $B$, the  free product $ \bfk\langle A, B \rangle $ of $A$ and $B$ is the unique $\bfk$-algebra (up to isomorphism) with $\bfk$-algebra homomorphisms  $ \alpha: A\rightarrow \bfk\langle A, B \rangle$ and $ \beta: B\rightarrow  \bfk\langle A, B \rangle $ satisfying the universal property: for any $\bfk$-module $C$ and any $\bfk$-algebra homomorphisms $\phi: A\rightarrow C$ and $ \psi: B\rightarrow C$, there is a unique $\bfk$-algebra homomorphism $\eta: \bfk\langle A, B \rangle\rightarrow C$ such that $ \phi=\eta \circ \alpha$ and $ \psi=\eta\circ\beta$. In fact $ \bfk\langle A, B \rangle $ is the coproduct in the category of associative $\bfk$-algebras.

\begin{defn}
 Let $(R,P)$ be a Rota-Baxter algebra of weight $\lambda$ and $ \bfk[Q]$ be the polynomial algebra with variable $Q$.  The {\bf ring of Rota-Baxter operators on $(R,P)$}, denoted by $U_{RB}(R, P)$, is defined to be  the quotient
\[ U_{RB}(R, P)=\bfk\langle R,\bfk[Q]\rangle /I,\]
where $I$ is the two-sided ideal of $\bfk\langle R, \bfk[Q]\rangle $
\begin{equation}
I=I_{R,Q}=\langle QrQ-P(r)Q+QP(r)+\lambda Q r \, |\,  r\in R \rangle.
\mlabel{eq:RBO}
\end{equation}

We will simply write $U_{RB}(R)$ for $U_{RB}(R, P)$ if $P$ is understood.
\end{defn}

See~\mcite{RR,RGG} for related constructions and applications to boundary value problems.

We will call an associative algebra $A$ together with a specific element $p\in A$ a {\bf pointed associative algebra} and denote it by $(A, p)$. A homomorphism between two pointed associative algebras $f: A=(A, p)\rightarrow (A', p')$ is an associative algebra homomorphism $f:A\rightarrow A'$ such that $f(p)=p'$.  Thus the pair $(U_{RB}(R,P), Q)$ is a pointed associative algebra.

The definition of $U_{RB}(R,P)$ is translated to the following universal property.
\begin{prop} \label{universal} Let $\sigma:R\to \bfk\langle R,\bfk[Q]\rangle \to U_{RB}(R,P)$ be the natural algebra homomorphism. For any pointed associative $\bfk$-algebra $(A, p)$  and any $\bfk$-algebra homomorphism $\phi: R\rightarrow A$ satisfying
\begin{equation}
\phi(P(r))p=p\phi(r)p+p\phi(P(r))+\lambda p\phi(r) \quad \text{ for all } r\in R,
\mlabel{eq:rbouniv}
\end{equation}
there is a unique pointed associative  $\bfk$-algebra homomorphism
$\eta: (U_{RB}(R), Q)\rightarrow (A, p)$ such that
$\phi=\eta\circ \sigma $.
\end{prop}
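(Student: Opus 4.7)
The plan is to invoke the universal property of the free product $\bfk\langle R,\bfk[Q]\rangle$ and then check that the generators of the ideal $I$ in~\eqref{eq:RBO} are killed by the candidate homomorphism, so that it descends to the quotient $U_{RB}(R,P)$.

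First, I would assemble a map out of the free product. The assignment $Q\mapsto p$ extends uniquely to a $\bfk$-algebra homomorphism $\psi:\bfk[Q]\to A$, since the single element $p$ generates a commutative subalgebra of $A$ and $\bfk[Q]$ is the free commutative $\bfk$-algebra on one generator. Combined with the given $\phi:R\to A$, the universal property of the free product produces a unique $\bfk$-algebra homomorphism $\tilde\eta:\bfk\langle R,\bfk[Q]\rangle\to A$ with $\tilde\eta|_R=\phi$ and $\tilde\eta(Q)=p$.

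Second, I would verify that $\tilde\eta$ vanishes on $I$. Since $I$ is the two-sided ideal generated by the elements $QrQ-P(r)Q+QP(r)+\lambda Qr$ for $r\in R$, it suffices to evaluate $\tilde\eta$ on each such generator, obtaining
\[ p\phi(r)p-\phi(P(r))p+p\phi(P(r))+\lambda p\phi(r), \]
which is zero by the compatibility hypothesis~\eqref{eq:rbouniv}. Hence $\tilde\eta(I)=0$, and $\tilde\eta$ factors through the quotient map $\bfk\langle R,\bfk[Q]\rangle\twoheadrightarrow U_{RB}(R,P)$ to yield the desired $\eta:U_{RB}(R,P)\to A$. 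By construction $\eta\circ\sigma=\phi$ and $\eta(Q)=p$, so $\eta$ is a morphism of pointed $\bfk$-algebras.

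For uniqueness, note that $U_{RB}(R,P)$ is generated as a $\bfk$-algebra by the images of $R$ and $Q$, since the free product $\bfk\langle R,\bfk[Q]\rangle$ has this generation property and it is preserved under taking quotients. Any pointed $\bfk$-algebra homomorphism extending $\phi$ and sending $Q$ to $p$ is therefore forced to agree with $\eta$ on a generating set, hence everywhere. The verification in the second paragraph is the only nontrivial step, but it is essentially a transcription of the definition of $I$; there is no real obstacle, and the entire content of the proposition sits inside the definition of $U_{RB}(R,P)$.
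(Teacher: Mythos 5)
Your proposal is correct and follows essentially the same route as the paper: extend $\phi$ and $Q\mapsto p$ to the free product, observe that the generators of $I_{R,Q}$ map to $p\phi(r)p-\phi(P(r))p+p\phi(P(r))+\lambda p\phi(r)=0$ by Eq.~\eqref{eq:rbouniv}, and descend to the quotient. Your explicit uniqueness argument via generation by $R$ and $Q$ is a detail the paper leaves implicit, but the substance is identical.
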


\begin{proof} Any element $p$ in $A$ together with a $\bfk$-algebra homomorphism $ \phi: R\rightarrow A$ induces a $\bfk$-algebra homomorphism $\bfk \langle R, Q\rangle\rightarrow A$ sending $Q$ to $p$. The condition \eqref{eq:rbouniv} implies that the ideal $I_{R, Q}$ is in the kernel of this $\bfk$-algebra homomorphism. Thus this $\bfk$-algebra homomorphism induces a unique algebra homomorphism from the quotient
$U_{RB}(R)$ to $A$ with the required property.
\end{proof}

Because of this universal property, one may call $U_{RB}(R)$ the
universal enveloping algebra of the Rota-Baxter algebra $(R, P)$.
However, following the analog of calling the smash product
(or the skew polynomial ring)  $R\#\bfk[d]$ the ring of differential
operators for a differential algebra $(R, d)$ in Section~\mref{diff:mod},  we will call $U_{RB}(R, P)$ the {\bf ring of Rota-Baxter operators for $(R,P)$}.

As a consequence of this universal property, the map $\sigma: R\rightarrow U_{RB}(R)$ is injective by taking $A=R$, $p=0$ and $\phi=\operatorname{Id}_R$. Then we get a $\bfk$-algebra homomorphism $\eta: U_{RB}(R)\rightarrow R$ such that $ \operatorname{Id}_R=\eta\circ \sigma$.  In particular $R=U_{RB}(R)/\langle Q\rangle\cong R$ with $\langle Q\rangle$ being the ideal generated by $Q$ in $ U_{RB}(R)$.  Thus we can regard $R$ as a subalgebra of $U_{RB}(R)$ and $ U_{RB}(R)=R\oplus  \langle Q\rangle$ as a $R$-$R$-bimodule. In the next section we will describe the two sided ideal $\langle Q\rangle$ explicitly.

For a Rota-Baxter module $(M,p)$, the $R$-module structure on $M$ defines a $\bfk$-algebra homomorphism $\phi: R\rightarrow \End_{\bfk}(M)$. The $\bfk$-linear map $ p\in \End_{\bfk}(M)$ defines a $\bfk$-algebra homomorphism
$\psi: \bfk[Q]\rightarrow \End_{\bfk}(M)$ with $\psi(Q)=p$. Thus by the universal property mentioned above, there is a unique $\bfk$-algebra homomorphism $\eta:U_{RB}(R)\rightarrow \End_{\bfk}(M)$, which defines a $U_{RB}(R)$-module structure on $M$.
Conversely, for any $U_{RB}(R)$-module $M$, the $ \bfk$-algebra homomorphism $\eta: U_{RB}(R)\rightarrow \End_{\bfk}(M)$ restricts to the subalgebra $R$ to give an  $R$-module structure on $M$. The element $p=\eta(Q)$  defines a $(R,P)$-module structure on $M$ by Eq.~\eqref{eq:rbouniv}. If $ (M, p_M)$ and $(N, p_N)$ are $(R,P)$-modules, an $R$-module homomorphism $ f: M\rightarrow N$ is an $(R,P)$-module homomorphism if and only if $f$ is a $ U_{RB}(R,P)$-module homomorphism. Thus we have

\begin{theorem} An $(R,P)$-module structure on an $R$-module $M$
is exactly a $U_{RB}(R)$-module structure extending the $R$-module structure on $M$. More precisely, the category of $(R,P)$-modules is isomorphic to the category of  $U_{RB}(R)$-modules.
\mlabel{thm:RBM}
\end{theorem}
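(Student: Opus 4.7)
The plan is to deduce the theorem directly from the universal property established in Proposition~\ref{universal}, by translating the defining relation~\eqref{eq:rbouniv} into the Rota-Baxter module axiom~\eqref{eq:rbm} via the algebra map $R\to\End_\bfk(M)$.

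First I would construct the functor $(R,P)\Mod\to U_{RB}(R)\Mod$. Given $(M,p)\in(R,P)\Mod$, the $R$-module structure is a $\bfk$-algebra homomorphism $\phi\colon R\to \End_\bfk(M)$, and $p\in\End_\bfk(M)$ is a distinguished element, so $(\End_\bfk(M),p)$ is a pointed $\bfk$-algebra. The key observation is that the Rota-Baxter module axiom
\[
P(a)p(x)=p(ap(x))+p(P(a)x)+\lambda p(ax),\qquad a\in R,\ x\in M,
\]
says precisely that the images of $P(a)p$ and $pap+pP(a)+\lambda pa$ in $\End_\bfk(M)$ agree when applied to an arbitrary $x\in M$; that is,
\[
\phi(P(a))\,p = p\,\phi(a)\,p+p\,\phi(P(a))+\lambda\,p\,\phi(a),
\]
which is exactly the hypothesis~\eqref{eq:rbouniv} of Proposition~\ref{universal}. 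Hence the universal property provides a unique pointed $\bfk$-algebra homomorphism $\eta\colon(U_{RB}(R),Q)\to(\End_\bfk(M),p)$ with $\phi=\eta\circ\sigma$, which is exactly a $U_{RB}(R)$-module structure on $M$ extending the given $R$-module structure (with $Q$ acting as $p$).

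Conversely, I would construct the functor $U_{RB}(R)\Mod\to (R,P)\Mod$. Given a $U_{RB}(R)$-module $M$, with structure homomorphism $\eta\colon U_{RB}(R)\to\End_\bfk(M)$, restrict along $\sigma\colon R\hookrightarrow U_{RB}(R)$ to get an $R$-module structure, and let $p:=\eta(Q)\in\End_\bfk(M)$. The relations $QrQ-P(r)Q+QP(r)+\lambda Qr\in I$ force, upon applying $\eta$ and evaluating at $x\in M$, exactly the identity~\eqref{eq:rbm}, so $(M,p)$ is an $(R,P)$-module. A routine check shows these two constructions are mutually inverse on objects, since under the first construction $Q$ acts as $p$ by definition, and under the second $p$ is recovered as $\eta(Q)$.

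For morphisms, the translation is immediate and will not be an obstacle: an $R$-linear map $f\colon M\to N$ intertwines the $(R,P)$-structures, i.e.\ $f\circ p_M=p_N\circ f$, if and only if it intertwines the actions of $Q$, i.e.\ $f\circ\eta_M(Q)=\eta_N(Q)\circ f$; combined with $R$-linearity and the fact that $R$ and $Q$ generate $U_{RB}(R)$ as a $\bfk$-algebra, this is equivalent to $f$ being a homomorphism of $U_{RB}(R)$-modules. Thus the two assignments extend to mutually inverse functors, yielding the claimed isomorphism of categories. The only mildly delicate point is the verification that~\eqref{eq:rbm} and~\eqref{eq:rbouniv} really express the same relation in $\End_\bfk(M)$, but this is nothing more than reading off both sides on a generic $x\in M$.
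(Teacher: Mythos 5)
Your proposal is correct and follows essentially the same route as the paper: the paper likewise obtains the $U_{RB}(R)$-module structure by feeding the pointed algebra $(\End_\bfk(M),p)$ into the universal property of Proposition~\ref{universal}, observing that Eq.~\eqref{eq:rbm} evaluated on all $x\in M$ is exactly the relation~\eqref{eq:rbouniv} in $\End_\bfk(M)$, and recovers the converse by restricting along $\sigma$ and setting $p=\eta(Q)$. The treatment of morphisms via the fact that $R$ and $Q$ generate $U_{RB}(R)$ also matches the paper's (brief) argument.
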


See Theorem~\mref{thm:rrboconst} for the structure of $U_{RB}(R)$.
Thus we can identify the category $(R,P)$-Mod of $(R,P)$-modules with the category $U_{RB}(R)$-Mod of $U_{RB}(R)$-modules. In particular, the category of $(R,P)$-modules is an abelian category with enough projective objects.

\begin{exam} We revisit the example at the end of Section~\mref{ss:decomp}. Let $ \bfk$ be any commutative ring and $\lambda\in \bfk$, then $ P=-\lambda : \bfk\rightarrow \bfk$ is a Rota-Baxter operator of weight $\lambda$. Then $U_{RB}(\bfk, P)=\bfk[t]/\langle t(t+\lambda)\rangle$. In fact, $U_{RB}(\bfk, P)$ is the Hecke algebra of $S_2$ over $\bfk$ with parameter $q=\lambda-1$.
\end{exam}

\subsection{Categorical properties}
\mlabel{ss:ind}
We now consider some categorical properties of Rota-Baxter modules and the ring of Rota-Baxter operators, beginning with properties of Rota-Baxter modules.

\begin{lemma} \label{lem:tensor-action}
\begin{enumerate}
\item
If $ (M, p)$ is an $ (R, P)$-module, then for any $ \bfk$-module $V$,
$(V\ot_{\bfk} M, 1\ot p)$ is also an $(R, P)$-module;
\mlabel{it:tensmod1}
\item
For each fixed $\bfk$-module $ V$, the assignment
$$T_V: M \mapsto V\otimes_{\bfk} M \quad \text{for all } M\in (R,P)\Mod,$$
defines an endofunctor of $ (R,P)\Mod$;
\mlabel{it:tensmod2}
\item
Further the assignment $$V\mapsto T_V \quad \text{for all } V\in\bfk\Mod,$$
defines a tensor functor
$\bfk\Mod \rightarrow \End((R,P)\Mod)$, where $ \End((R,P)\Mod)$ is the tensor category of all endofunctors  $F: (R,P)\Mod\rightarrow (R,P)\Mod$ with morphisms being natural transformations;
\mlabel{it:tensmod3}
\item
If $ V$ is a unitary $\bfk$-algebra with unit $u: \bfk\rightarrow V$, then the multiplication $ m: V\ot V\rightarrow V$ defines a natural transformation $\mu: T_V\circ T_V\rightarrow T_V$ which is associative and, together with the unit $\eta:T_{\bfk}=\on{Id}\rightarrow T_{V}$, makes $T_V$ into a monad on $(R, P)\Mod$.
\mlabel{it:tensmod4}
\end{enumerate}
\mlabel{lem:tensmod}
\end{lemma}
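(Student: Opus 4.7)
\medskip
\noindent\textbf{Proof proposal.} The plan is to treat the four items sequentially, with each reducing to a routine verification once the $R$-action on $V\ot_{\bfk} M$ is fixed as $a(v\ot x)=v\ot ax$ and the Rota-Baxter operator is taken to act only on the second tensor factor. For item~(\mref{it:tensmod1}) I would simply expand both sides of Eq.~\eqref{eq:rbm} for the pair $(V\ot_{\bfk} M,1\ot p)$ with the scalar $a\in R$: the left hand side becomes $v\ot P(a)p(x)$ and the right hand side becomes $v\ot\bigl(p(ap(x))+p(P(a)x)+\lambda p(ax)\bigr)$, so the identity is obtained from the $(R,P)$-module axiom for $(M,p)$ tensored on the left by $v$.

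For item~(\mref{it:tensmod2}) I would define $T_V$ on morphisms by $T_V(f)=\on{Id}_V\ot f$. Its $R$-linearity is immediate and compatibility with the operator is the identity $(1\ot p_N)(1\ot f)=1\ot(p_Nf)=1\ot(fp_M)=(1\ot f)(1\ot p_M)$, using that $f$ is a morphism in $(R,P)\Mod$. Functoriality is obvious. For item~(\mref{it:tensmod3}) I would define $T_\phi\colon T_V\Rightarrow T_W$, for a $\bfk$-linear map $\phi\colon V\to W$, by $(T_\phi)_M=\phi\ot\on{Id}_M$; each component is a morphism of $(R,P)$-modules and naturality in $M$ is a direct check. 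To exhibit $T$ as a tensor functor I would invoke the standard coherent associator and left-unit isomorphisms of $\bfk\Mod$ to produce natural isomorphisms $T_V\circ T_W\cong T_{V\ot W}$ and $T_{\bfk}\cong\on{Id}$, noting that these $\bfk$-linear isomorphisms trivially commute with operators of the form $1\ot 1\ot p$ and $1\ot p$ because the Rota-Baxter operator touches only the rightmost factor, so the isomorphisms are automatically $(R,P)$-module isomorphisms.

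For item~(\mref{it:tensmod4}), once the tensor functor of~(\mref{it:tensmod3}) is in place, I would define $\mu\colon T_V\circ T_V\Rightarrow T_V$ by $\mu_M=m\ot\on{Id}_M$ (composed with the associator) and $\eta\colon\on{Id}\Rightarrow T_V$ by $\eta_M=u\ot\on{Id}_M$ (composed with the unit isomorphism $M\cong\bfk\ot M$). Each component is $(R,P)$-linear by the same reason as above. The monad axioms---associativity of $\mu$ and the left/right unit laws---then reduce pointwise to the associativity of $m$ and the unit laws for $u$ in $V$, applied after stripping off $\on{Id}_M$ from every tensor factor.

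The verifications are essentially formal; the only conceptual point, which I would state once and use throughout, is that because $1\ot p$ never interacts with the leftmost tensor factors, every natural isomorphism coming from the symmetric monoidal structure of $\bfk\Mod$ automatically lifts to $(R,P)\Mod$. Thus the hardest part is not a calculation but a clean statement: the functor $V\mapsto T_V$ is the restriction to $(R,P)\Mod$ of the canonical action of the symmetric monoidal category $\bfk\Mod$ on any $\bfk$-linear abelian category, and in our case this action manifestly preserves the Rota-Baxter operator.
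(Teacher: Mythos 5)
Your proposal is correct and follows essentially the same route as the paper: the $R$-action $a(v\ot x)=v\ot ax$ with operator $1\ot p$ is verified directly against Eq.~\eqref{eq:rbm}, functoriality and the tensor-functor structure come from $T_{V\ot V'}=T_V\circ T_{V'}$ and the coherence of $\bfk\Mod$, and the monad axioms reduce to associativity and unitality of $V$. Your closing observation that this is the restriction of the canonical $\bfk\Mod$-action on a $\bfk$-linear category is exactly the remark the paper makes (via Theorem~\mref{thm:RBM}) before opting to spell out the operator computations.
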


\begin{proof} Using Theorem~\ref{thm:RBM}, the lemma follows from the standard argument that, for any associative $\bfk$-algebra $A$, the tensor category $\bfk$-Mod of $\bfk$-modules acts on the category $A$-Mod of $A$-modules through the algebra isomorphism $ A\cong \bfk\otimes_{\bfk} A$.  However, we present a proof using the definition of $(R, P)$-modules to highlight the role played by Rota-Baxter operators. In the following $\otimes=\otimes_{\bfk}$ as usual.
\smallskip

\noindent
(\mref{it:tensmod1}) It follows from a simple verification that the standard action of $R$ on $V\ot M$ defined by $ r(v\otimes x)=v\otimes r x$ satisfies Eq.~(\mref{eq:rbm}).

\noindent
(\mref{it:tensmod2}) For an $(R, P)$-module homomorphism $f:M\to M'$ in $\bfk\Mod$, the map $T_V(f):T_V(M)\to T_V(M')$  defined by
$T_V(f)(v\ot x)=v\ot f(x)$ is a homomorphism of $(R,P)$-modules. Thus $T_V$ is a functor.
\smallskip

\noindent
(\mref{it:tensmod3}) follows since $T_{V\ot V'}=T_V\circ T_{V'}$ \cite[pg. 206, Ex. 2]{Ma}.
\smallskip

\noindent
(\mref{it:tensmod4}) Following the standard terminology of a monad~\cite[Chapter~VI]{Ma}, one verifies the equalities of natural transformations
$$ \mu\circ (T_V\mu) = \mu \circ (\mu T_V): T_V^3\to T,$$
$$ \mu\circ T\eta =\mu \circ \eta T=\id_{T_V}:T_V \to T_V,$$
that define a monad on $(R,P)\Mod$.
\end{proof}

 Let $f: (R', P')\rightarrow (R, P)$ be a homomorphism of Rota-Baxter algebras. If $ (M, p)$ is an $(R,P)$-module, then with the action of $R'$ on $M$ defined by
 $$r'v:=f(r')v  \quad \text{ for all }  r'\in R', v\in M,$$
the $(R,P)$-module $(M,p)$ becomes an $(R', P')$-module which we will denote $f^*(M, p)$. We thus obtain a pullback functor $f^*: (R, P)\Mod\rightarrow (R',P')\Mod$ of abelian categories which is exact and faithful.  We will see that the functor $f^*$ admits a left adjoint functor $f_!$ and right adjoint functor $f_*$ with the help of the ring of Rota-Baxter operators.

If  $g: R\rightarrow R$ is a $\bfk$-algebra automorphism, then
  $(R, g^*(P))$ with $g^*(P)=g^{-1}Pg$ is a Rota-Baxter algebra and $g: (R, g^*(P))\rightarrow(R, P))$ is an isomorphism of Rota-Baxter algebras.  If $(M, p)$ is an $(R,P)$-module, then $ g^*(M)$ is an $R$-module as the pullback of $g$ defined by
$  r\cdot x=g(r)x $ for all $r\in R$ and $x\in M$. Then $(g^{*}(M), p)$ is an $(R, g^{*}(P))$-module.  Thus $ g^*$  is an isomorphism between the categories $(R, P)\Mod $ and $(R, g^{*}(P))\Mod$.

Now we turn our attention to the categorical properties of rings of Rota-Baxter operators.

\begin{prop} Let $f: (R, P_R)\rightarrow (R', P_{R'})$ be a homomorphism of Rota-Baxter algebras.
\begin{enumerate}
\item
The map
$$ f_Q: U_{RB}(R)\rightarrow U_{RB}(R'), \quad f_Q(Q)=Q, f_Q(r)=f(r)\quad \text{for all } r\in R,  $$
is a homomorphism of associative $\bfk$-algebras. Thus we have a functor $U_{RB}: \on{RBA}_{\bfk}\rightarrow \on{Alg}_{\bfk}$.
\mlabel{it:rbo1}
\item
If $ f: (R, P_R)\rightarrow (R', P_{R'})$ is surjective, then so is $f_Q$.
\mlabel{it:rbo2}
\end{enumerate}
\mlabel{pp:rbo}
\end{prop}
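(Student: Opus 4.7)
The plan is to deduce both statements from the universal property of the ring of Rota-Baxter operators (Proposition~\ref{universal}), applied in a natural way.

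For part~(\mref{it:rbo1}), I would let $\sigma: R\to U_{RB}(R)$ and $\sigma': R'\to U_{RB}(R')$ denote the canonical homomorphisms, and form the composite $\bfk$-algebra map
\[
\phi:=\sigma'\circ f: R\longrightarrow U_{RB}(R').
\]
To apply Proposition~\ref{universal} with target $(A,p)=(U_{RB}(R'),Q)$, I need to verify the compatibility condition~\eqref{eq:rbouniv}, namely
\[
\phi(P_R(r))Q=Q\phi(r)Q+Q\phi(P_R(r))+\lambda Q\phi(r)\quad\text{for all } r\in R.
\]
Since $f$ is a Rota-Baxter algebra homomorphism, $f(P_R(r))=P_{R'}(f(r))$, so with $r':=f(r)\in R'$ the required identity becomes
\[
\sigma'(P_{R'}(r'))Q=Q\sigma'(r')Q+Q\sigma'(P_{R'}(r'))+\lambda Q\sigma'(r'),
\]
which is exactly the relation~\eqref{eq:RBO} that holds in $U_{RB}(R')$ by construction. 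The universal property then yields a unique pointed $\bfk$-algebra homomorphism $f_Q:(U_{RB}(R),Q)\to(U_{RB}(R'),Q)$ with $f_Q\circ\sigma=\sigma'\circ f$, which on generators is visibly $r\mapsto f(r)$ and $Q\mapsto Q$.

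For functoriality, given a second Rota-Baxter homomorphism $g:(R',P_{R'})\to(R'',P_{R''})$, both $(g\circ f)_Q$ and $g_Q\circ f_Q$ are pointed $\bfk$-algebra homomorphisms $U_{RB}(R)\to U_{RB}(R'')$ that agree with $\sigma''\circ(g\circ f)$ on $\sigma(R)$ and send $Q$ to $Q$, so the uniqueness clause of Proposition~\ref{universal} forces them to coincide; similarly $(\on{Id}_R)_Q=\on{Id}_{U_{RB}(R)}$. Thus $U_{RB}$ is a functor $\on{RBA}_{\bfk}\to\on{Alg}_{\bfk}$.

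For part~(\mref{it:rbo2}), I would use that $U_{RB}(R')$ is generated as a $\bfk$-algebra by $\sigma'(R')\cup\{Q\}$, being a quotient of the free product $\bfk\langle R',\bfk[Q]\rangle$. If $f$ is surjective then $\sigma'(R')=\sigma'(f(R))=f_Q(\sigma(R))\subseteq\on{im}(f_Q)$, and $Q=f_Q(Q)\in\on{im}(f_Q)$, so $\on{im}(f_Q)$ contains a generating set and equals all of $U_{RB}(R')$.

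The only slightly delicate point is the verification of condition~\eqref{eq:rbouniv}, but as shown above this reduces directly, via the hypothesis $P_{R'}\circ f=f\circ P_R$, to an instance of the defining relation of $U_{RB}(R')$; everything else is a clean invocation of the universal property.
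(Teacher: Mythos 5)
Your proof is correct and follows essentially the same route as the paper: both apply the universal property of Proposition~\ref{universal} to the composite $\sigma'\circ f$, reducing condition~\eqref{eq:rbouniv} via $P_{R'}\circ f=f\circ P_R$ to the defining relation~\eqref{eq:RBO} in $U_{RB}(R')$, and both prove surjectivity by noting that the image of $f_Q$ contains the generators $\sigma'(R')\cup\{Q\}$. Your explicit check of functoriality (compatibility with composition and identities) via the uniqueness clause is a small addition the paper leaves implicit.
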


\begin{proof}
(\mref{it:rbo1}) Let $\sigma': R'\to \bfk\langle R',\bfk[Q]\rangle \to U_{RB}(R')$ be the natural algebra homomorphism. Then the map
$ R\stackrel{f}{\rightarrow}R'\stackrel{\sigma'}{\rightarrow} U_{RB}(R')$  satisfies the condition
\[ \sigma'(f(r))Q=Q\sigma'(f(r))Q+Q\sigma'(f(P(r)))+\lambda Q\sigma'(f(r)) \quad \text{ for all } r\in R.\]
Then the universal property of $U_{RB}(R)$ gives rise to the $\bfk$-algebra homomorphism $f_Q$.

\noindent
(\mref{it:rbo2})
If $f$ is surjective, then the induced map $F:\bfk\langle R,Q\rangle \to \bfk\langle R',Q\rangle$ and hence its composition with the quotient map $\bfk\langle R',Q\rangle\to U_{RB}(R')$ are surjective. Then the induced map $f_Q$ is also surjective.
\end{proof}

Taking the algebra homomorphism $ f_Q: U_{RB}(R)\rightarrow U_{RB}(R')$ in Proposition~\mref{pp:rbo}, we obtain the pullback functor $ f_Q^*: U_{RB}(R')\Mod\rightarrow U_{RB}(R)\Mod$. In particular, when $(R, P)$ is a Rota-Baxter subalgebra of $ (R', P')$, the functor is the restriction functor. We will simply write $f^*$ for the functor $f_Q^*$. This is consistent with the notation $f^* $ defined above.

Similarly, the functor $f^*$ has a left adjoint functor $$f_!: (R,P)\Mod\rightarrow (R',P')\Mod$$
defined by $f_{!}(M)=U_{RB}(R')\otimes_{U_{RB}(R)}M$ for $M$ in $(R,P)\Mod$, i.e.,
for any $M$ in $(R',P')\Mod$,  the pre-composition with the map $$M\rightarrow U_{RB}(R')\otimes_{U_{RB}(R)}M,  x\to 1\otimes x,$$
defines an isomorphism of $\bfk$-modules
\[ \Hom_{(R',P')}(f_!(M), M)\cong \Hom_{(R,P)}(M, f^*(M)). \]

  There is also a right adjoint functor $f_{*}: (R,P)\Mod\rightarrow (R',P')\Mod$ defined by $f_{*}(M)=\Hom_{U_{RB}(R)}(U_{RB}(R'), M)$.

In case $ (R,P)$ is a Rota-Baxter subalgebra of $(R', P')$ with $f$ being the embedding, we will call $f_!$ the coinduction functor, denote by $\on{CoInd}_{R}^{R'}$. The $(R',P)$-module $\on{CoInd}_R^{R'}M=U_{RB}(R')\otimes_{U_{RB}(R)}M$ is called the coinduced module and
  $\on{Ind}_R^{R'}M=\Hom_{U_{RB}(R)}(U_{RB}(R'), M)$ is called the induced module.

We next discuss some bimodule properties of Rota-Baxter modules.

Let $ (R, P)$ and $(S, P')$ be two Rota-Baxter algebras of weights $ \lambda$ and  $\mu$ respectively. Let $ M$ be an $R$-$S$-bimodule, i.e., $M$ is a left $R$-module and right $S$-module such that $ r(ms)=(rm)s$ for all $ r\in R,\; m \in M, \; s\in S$. A $\bfk$-linear map $ p: M\rightarrow M$ is said to give a {\bf strict  Rota-Baxter bimodule structure} if $ (M, p)$ is a left $(R, P)$-module and also a right $ (S, P')$-module, i.e., for all $ r\in R,\; m\in M, \; s\in S$
\begin{eqnarray} \label{eq:rbbimod1}p(m)P'(s)&=& p(p(m)s+mP'(s)+\mu ms), \\
\label{eq:rbbimod2} P(r)p(m)&=&p(rp(m)+P(r)m+\lambda  rm).
\end{eqnarray}

\begin{lemma} If $ (M, p)$ is a strict $(R, P)$-$(S, P')$-bimodule, then $ (\lambda-\mu) p(rp(m)s)=0$ for all $ r\in R,\; m \in M ,\; s \in S$. In particular, $(\lambda-\mu)p^2(m)=0$ for all $m\in M$.
\end{lemma}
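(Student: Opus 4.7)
The plan is to play the two Rota-Baxter bimodule axioms against each other by computing the triple product $P(r)\,p(m)\,P'(s)$ in two ways using the associativity $(P(r)p(m))P'(s)=P(r)(p(m)P'(s))$ of the $R$-$S$-bimodule structure on $M$.

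First I would expand the left-hand side by applying Eq.~\eqref{eq:rbbimod2} to the factor $P(r)p(m)$, and then apply Eq.~\eqref{eq:rbbimod1} to each of the three resulting summands of the form $p(-)P'(s)$. Symmetrically, I would expand the right-hand side by applying Eq.~\eqref{eq:rbbimod1} to $p(m)P'(s)$ first, and then Eq.~\eqref{eq:rbbimod2} to each summand of the form $P(r)p(-)$. Both expressions then take the form $p(A)$ and $p(B)$ for explicit sums $A$ and $B$, so associativity yields $p(A-B)=0$. The natural cancellations are the three symmetric terms $P(r)mP'(s)$, $\lambda rmP'(s)$, $\mu P(r)ms$ and the scalar $\lambda\mu rms$, each of which appears once on each side.

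After these cancellations, the remaining difference $A-B$ still contains a term $P(r)p(m)s$ on the right; rewriting it using Eq.~\eqref{eq:rbbimod2} again (with $s$ on the outside, since we are sitting inside the $S$-action) collapses the $p(rp(m))s$, $p(P(r)m)s$ and $\lambda p(rm)s$ contributions. What survives, after I collect terms of the shape $r\,[\,\cdot\,]$, is
\[
A-B \;=\; r\bigl[p(m)P'(s)-p(p(m)s)-p(mP'(s))-\mu p(ms)\bigr] - (\lambda-\mu)\,r\,p(m)\,s .
\]
The bracketed expression is exactly the left minus right side of Eq.~\eqref{eq:rbbimod1}, hence zero, so $A-B=-(\lambda-\mu)\,r\,p(m)\,s$. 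Applying $p$ yields the claimed identity $(\lambda-\mu)\,p(r\,p(m)\,s)=0$.

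The main obstacle is purely bookkeeping: the two expansions each produce nine summands inside a $p(\cdot)$, and one must track them carefully through two rounds of rewriting, being vigilant not to move $p$ across the $R$- or $S$-action improperly. The ``in particular'' statement then follows by specializing $r=1_R$ and $s=1_S$, which gives $rp(m)s=p(m)$ and hence $(\lambda-\mu)p^{2}(m)=0$ for all $m\in M$.
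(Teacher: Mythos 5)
Your proposal is correct and is essentially the paper's own argument: the paper's proof consists precisely of expanding $(P(r)p(m))P'(s)=P(r)(p(m)P'(s))$ via Eqs.~\eqref{eq:rbbimod1} and \eqref{eq:rbbimod2} and simplifying, and your bookkeeping (the four symmetric cancellations, the collapse of $p(rp(m))s+p(P(r)m)s+\lambda p(rm)s$ back to $P(r)p(m)s$, and the final bracket vanishing by Eq.~\eqref{eq:rbbimod1}) carries that out correctly, with the ``in particular'' following by taking $r=1_R$, $s=1_S$.
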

This suggests that the interesting case to consider is when $\lambda=\mu$.

\begin{proof} The identity follows from simplifying  the identity
$(P(r)p(m))P'(s)=P(r)(p(m)P'(s))$
by Eqs.~(\mref{eq:rbbimod1}) and (\mref{eq:rbbimod2}).
\end{proof}

Since $ (M,p)$ is a left $ (R,P)$ -module, it is a left $U_{RB}(R)$-module. Similarly, it is a right $U_{RB}(S)$-module. If we use $Q' \in U_{RB}(S)$ to denote the generator, then we have two $\bfk$-algebra homomorphisms
$U_{RB}(R)\rightarrow \End_{\bfk}(M)$ and $ U_{RB}(S)\rightarrow \End_{\bfk}(M)$ with both $ Q$ and $Q'$ sent to $p$.
\begin{prop} $(M, p)$ is an $(R, P)$-$(S, P')$-bimodule if and only if it is a
$U_{RB}(R)$-$U_{RB}(S)$-bimodule with $ Qm=mQ'$ for $m\in M$.
\end{prop}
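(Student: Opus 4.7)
The strategy is to apply Theorem~\mref{thm:RBM} (and its symmetric right-module version) twice, one on each side, and then reduce the $U_{RB}(R)$-$U_{RB}(S)$-bimodule axiom to a verification on the algebra generators of the rings of Rota-Baxter operators.

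For the ``if'' direction, suppose $(M,p)$ is a $U_{RB}(R)$-$U_{RB}(S)$-bimodule with $Qm = mQ'$ for all $m \in M$. Restricting the left $U_{RB}(R)$-action to $R \subset U_{RB}(R)$ and the right $U_{RB}(S)$-action to $S \subset U_{RB}(S)$ yields a left $R$- and right $S$-module structure whose commutativity is inherited from the ambient bimodule axiom. Setting $p(m) := Qm = mQ'$, Theorem~\mref{thm:RBM} applied separately to each side gives that $(M,p)$ is simultaneously a left $(R,P)$-module and a right $(S,P')$-module, hence an $(R,P)$-$(S,P')$-bimodule.

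For the ``only if'' direction, given a bimodule $(M,p)$, Theorem~\mref{thm:RBM} produces a unique left $U_{RB}(R)$-module structure extending the $R$-action with $Q$ acting as $p$, and, symmetrically, a unique right $U_{RB}(S)$-module structure extending the $S$-action with $Q'$ acting as $p$. Hence $Qm = p(m) = mQ'$. It remains to establish $(um)v = u(mv)$ for all $u \in U_{RB}(R)$, $v \in U_{RB}(S)$. Because this identity is $\bfk$-bilinear and stable under products in each variable separately, it suffices to verify it for $u$ ranging over the generating set $R \cup \{Q\}$ of $U_{RB}(R)$ and $v$ ranging over $S \cup \{Q'\}$ of $U_{RB}(S)$. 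The pair $(r,s) \in R \times S$ is the $R$-$S$-bimodule axiom; the pair $(Q,Q')$ reduces to $p^2(m) = p^2(m)$ via $Qm = mQ'$; and the cross pairs $(r, Q')$ and $(Q, s)$ yield $p(rm) = r p(m)$ and $p(ms) = p(m)s$ respectively, which are precisely the compatibility conditions between the operator $p$ and each of the one-sided module actions that a Rota-Baxter bimodule carries.

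The main conceptual point, rather than a technical obstacle, lies in recognizing that the $U_{RB}(R)$-$U_{RB}(S)$-bimodule axiom is generated exactly by the underlying $R$-$S$-bimodule condition together with the two cross-compatibility identities $p(rm) = rp(m)$ and $p(ms) = p(m)s$, so that the bimodule structures on the two sides of the equivalence are in literal correspondence once $Qm = mQ'$ is imposed.
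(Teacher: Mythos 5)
Your overall strategy---apply Theorem~\mref{thm:RBM} on each side and then check the left-right compatibility $(um)v=u(mv)$ only on the algebra generators $R\cup\{Q\}$ and $S\cup\{Q'\}$---is natural, the reduction to generators is valid, and the ``if'' direction is fine. The gap is in your treatment of the cross pairs in the ``only if'' direction. You assert that the identities forced by the pairs $(r,Q')$ and $(Q,s)$, namely $p(rm)=rp(m)$ and $p(ms)=p(m)s$, ``are precisely the compatibility conditions between the operator $p$ and each of the one-sided module actions that a Rota-Baxter bimodule carries.'' They are not. The defining conditions of a strict Rota-Baxter bimodule are Eqs.~\eqref{eq:rbbimod1} and \eqref{eq:rbbimod2}, which express the Rota-Baxter module axiom on each side; they do not assert any one-sided linearity of $p$, and $p(rm)=rp(m)$ would make $p$ a homomorphism of left $R$-modules, which is incompatible with $p$ being a genuinely Rota-Baxter-type operator. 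Concretely, take $R=S=\bfk((t))$ with $P=P'$ the projection onto the pole part and $(M,p)=(R,P)$ under left and right multiplication: this is a strict $(R,P)$-$(R,P)$-bimodule, yet $P(t\cdot t^{-1})=P(1)=0$ while $t\,P(t^{-1})=1$, so $(rm)Q'\neq r(mQ')$ and the full $U_{RB}(R)$-$U_{RB}(S)$-bimodule axiom fails.

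The upshot is that the two cross identities are genuinely additional constraints, not consequences of the strict bimodule axioms, so the ``only if'' direction cannot be closed by the argument as written. Either the statement must be read with a weaker notion of $U_{RB}(R)$-$U_{RB}(S)$-bimodule (a left $U_{RB}(R)$-module and a right $U_{RB}(S)$-module whose restrictions to $R$ and $S$ satisfy the $R$-$S$-bimodule condition, together with $Qm=mQ'$), in which case your checks on the pairs $(r,s)$ and $(Q,Q')$ already suffice and the cross pairs should not be imposed at all, or the hypotheses must be strengthened to include the one-sided linearity of $p$. The paper offers no argument beyond the observation that each one-sided structure extends to the corresponding ring of Rota-Baxter operators with $Q$ and $Q'$ both acting as $p$, so it does not resolve this point; but the sentence in your proof identifying the cross-generator conditions with the strict bimodule axioms is where the argument breaks and cannot be left as is.
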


We remark that the definition of a strict bimodule requires that both left
and right module structure share the same operator.
 In terms of  the bimodule for the rings of Rota-Baxter algebras,
 this means that the actions of both $Q$ and $Q'$ on the module are the same as indicated in the proposition. One could require two possibly different
 commuting operators $p_l, p_r: M\rightarrow M$ for the left and right
 module structures respectively. In this case, we simply say that
 $ (M, p_l, p_r)$ is a bimodule.
 This  is  a $U_{RB}(R, P_R)$-$U_{RB}(S, P_S)$-bimodule.
 For example, $U_{RB}(R, P)$ is a left and right $(R, P)$-module with
 $p_l$ and $p_r$ simply being the left and right multiplication of the
 element $Q$. As $Q$ needs not be in the center of $U_{RB}(R, P)$,
 the left multiplication and right multiplication by $Q$ are two
 different operators on $U_{RB}(R, P)$. This bimodule may not be strict.

The following proposition is just a consequence of standard properties of
modules over associative algebras.

\begin{prop} Let $M$ be an $(R, P_R)$-$(S,P_S)$-bimodule and let $N$ be a left $(S,P_S)$-module. Then $ M\otimes_{U_{RB}(S,P_S)}N$ is a left
$ (R, P_R)$-module and $N\mapsto M\otimes_{U_{RB}(S,P_S)}N$
defines a functor $ (S, P_S)\Mod \rightarrow (R, P_R)\Mod$.  Similarly,
if $ L$ is a left $(R, P_R)$-module, then $ \Hom_{(R, P_R)}(M, L)$ is
a left $ (S, P)$-module and there is natural isomorphism  of
$\bfk$-modules
\[\Hom_{(R, P_R)}(M \otimes_{U_{RB}(S, P_S)} N, L)\cong \Hom_{(S, P_S)}(N, \Hom_{(R, P_R)}(M, L)).
\]
\end{prop}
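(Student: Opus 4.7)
The plan is to reduce the proposition to the classical tensor-hom adjunction for modules over associative $\bfk$-algebras, using the equivalence of categories from Theorem~\mref{thm:RBM} together with the preceding proposition identifying strict Rota-Baxter bimodules with ordinary bimodules over the corresponding rings of Rota-Baxter operators.

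First I would translate all the Rota-Baxter data into the associative-algebra framework. By the preceding proposition, a strict $(R,P_R)$-$(S,P_S)$-bimodule $M$ is precisely a $U_{RB}(R,P_R)$-$U_{RB}(S,P_S)$-bimodule satisfying $Qm = mQ'$ for all $m\in M$, where $Q \in U_{RB}(R,P_R)$ and $Q' \in U_{RB}(S,P_S)$ denote the distinguished generators. Similarly, by Theorem~\mref{thm:RBM}, the left $(S,P_S)$-module $N$ and the left $(R,P_R)$-module $L$ are the same data as a left $U_{RB}(S,P_S)$-module and a left $U_{RB}(R,P_R)$-module, respectively.

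Second, I would transport the standard constructions back. The tensor product $M \otimes_{U_{RB}(S,P_S)} N$ carries its usual left $U_{RB}(R,P_R)$-module structure, hence by Theorem~\mref{thm:RBM} a left $(R,P_R)$-module structure; on simple tensors the Rota-Baxter operator acts by $Q(m \otimes n) = (Qm) \otimes n = (mQ') \otimes n = m \otimes (Q'n)$, which one can read off as the operator induced by Eq.~\eqref{eq:rbm}. Functoriality in $N$ is inherited from the functoriality of the tensor product over $U_{RB}(S,P_S)$. Symmetrically, $\Hom_{(R,P_R)}(M,L) = \Hom_{U_{RB}(R,P_R)}(M,L)$ carries a left $U_{RB}(S,P_S)$-module structure defined by $(s \cdot f)(m) := f(ms)$, and therefore a left $(S,P_S)$-module structure. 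The claimed natural isomorphism is then exactly the classical $(\otimes,\Hom)$-adjunction applied to the associative $\bfk$-algebras $U_{RB}(R,P_R)$ and $U_{RB}(S,P_S)$, the bimodule $M$, and the left modules $N$ and $L$; this isomorphism is automatically natural in all arguments and $\bfk$-linear.

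The only real bookkeeping point is to check that the $(R,P_R)$-module structure produced on $M \otimes_{U_{RB}(S,P_S)} N$ via the Rota-Baxter axioms is the same as the one inherited through Theorem~\mref{thm:RBM} from the $U_{RB}(R,P_R)$-action on the first factor, and likewise for $\Hom$. This compatibility is forced by the universal property defining $U_{RB}$ together with the relation $Qm = mQ'$ from the preceding proposition, so there is no serious obstacle; once the ring of Rota-Baxter operators has been introduced, the proposition is formal.
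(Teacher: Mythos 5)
Your reduction to the classical tensor--hom adjunction over the associative algebras $U_{RB}(R,P_R)$ and $U_{RB}(S,P_S)$, via Theorem~\ref{thm:RBM} and the bimodule correspondence, is correct and is exactly the route the paper intends: it states the proposition as ``just a consequence of standard properties of modules over associative algebras'' and gives no further proof. The only cosmetic remark is that the relation $Qm=mQ'$ you invoke holds only for \emph{strict} bimodules, whereas the adjunction goes through for any $U_{RB}(R,P_R)$-$U_{RB}(S,P_S)$-bimodule (the paper's non-strict case with two commuting operators $p_l,p_r$), so that identity is not actually needed.
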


We end this section with a followup remark on products of Rota-Baxter modules in Section~\mref{sec:RBprod}.
\begin{remark}
Let $(R_1, P_1)$ and $(R_2, P_2)$ be two Rota-Baxter algebras of weight $ \lambda$. In Section~\ref{sec:RBprod}, we constructed the product Rota-Baxter algebra $(R, P)=(R_1\oplus R_2,P_1\oplus P_2)$. For $i=1$ or 2, the projection map $\pi_i: R\rightarrow R_i, i=1, 2,$ is a homomorphism of Rota-Baxter algebras and thus induces homomorphism of associative algebras
$U_{RB}(\pi_i):U_{RB}(R,P)\rightarrow U_{RB}(R_i,P_i)$. Hence we have a homomorphism $ \pi: U_{RB}(R,P)\rightarrow U_{RB}(R_1, P_1)\times U_{RB}(R_2, P_2)$.  If we use $ Q_i$ to denote the variable $Q$ in $ U_{RB}(R_i, P_i)$, then $ \pi(Q)=(Q_1, Q_2)\in U_{RB}(R_1, P_1)\times U_{RB}(R_2, P_2)$. This homomorphism is not an isomorphism as we have seen in terms of representation theory in Section~\ref{sec:RBprod}. It is not obvious from the definition that $\pi$ is surjective.
 We will see in Remark~\ref{rk:RBprod-ker}  that $ \pi$ is surjective and the kernel will be explicitly constructed.
\mlabel{rk:RBprodring}
\end{remark}

\section{Construction of the ring of Rota-Baxter operators}
\mlabel{sec:rbost}

By Theorem~\mref{thm:RBM}, the study of Rota-Baxter modules is reduced to the study of modules over $U\rbring{R}$, the ring of Rota-Baxter operators. Thus it is necessary to get precise information on the algebra
$U\rbring{R}$. So in this section we provide a general construction of $U\rbring{R}$ and then consider some special cases.

\subsection{The general construction}

We first realize the ring $U\rbring{R}$ of Rota-Baxter operators on a Rota-Baxter algebra $(R,P)$ as an $R$-bimodule.  Recall that $ \tilde{P}=-\lambda I_R-P$.
We note that the relation in Eq.~(\mref{eq:RBO}):
$$ QfQ-P(f)Q+QP(f) +\lambda Q f=0$$
can be regarded as the rewriting rule in the context of rewriting systems~\mcite{BN}
$$ QfQ\mapsto  P(f)Q-QP(f) -\lambda Q f=P(f)Q+Q\tilde{P}(f)$$
that replaces a monomial with multiple $Q$-factors by a linear combination of monomials with fewer $Q$-factors. Iterations of this process lead to a linear combination of monomials with only one $Q$-factor. Thus letting $\langle Q \rangle $ denote the two-sided ideal generated by $Q$ in $U\rbring{R}$, we have
\begin{lemma} \label{lem:ideal}  $\langle Q\rangle =RQR \subseteq U_{RB}(R)$.
\end{lemma}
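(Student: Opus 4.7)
The containment $RQR \subseteq \langle Q\rangle$ is immediate since $Q \in \langle Q\rangle$ and $\langle Q\rangle$ is a two-sided ideal. So my plan is to prove the reverse inclusion by showing that $RQR$ is itself a two-sided ideal of $U_{RB}(R)$ containing $Q$. Since $R$ is unital, $Q = 1\cdot Q\cdot 1 \in RQR$, so it suffices to show that $RQR$ is closed under left and right multiplication by any element of $U_{RB}(R)$.

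Because $U_{RB}(R)$ is generated as a $\bfk$-algebra by the image of $R$ together with $Q$, it is enough to verify closure of $RQR$ under left/right multiplication by elements of $R$ and by $Q$. The $R$-actions are trivial: $r\cdot(sQt) = (rs)Qt \in RQR$ and $(sQt)\cdot r = sQ(tr) \in RQR$. So the real content is in handling left/right multiplication by $Q$, and this is precisely where the defining relation of $U_{RB}(R)$ enters via the rewriting rule
\[ QrQ = P(r)Q + Q\tilde{P}(r) \]
highlighted just before the lemma.

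For left multiplication by $Q$, I would compute
\[ Q\cdot(sQt) = (QsQ)\cdot t = \bigl(P(s)Q - QP(s) - \lambda Qs\bigr)t = P(s)Qt - QP(s)t - \lambda Qst, \]
and observe that each summand lies in $RQR$ (using $1 \in R$ to put units on the missing side for the last two terms). For right multiplication by $Q$, the symmetric computation
\[ (sQt)\cdot Q = s(QtQ) = sP(t)Q - sQP(t) - \lambda sQt \]
again lands inside $RQR$. This establishes that $RQR$ is a two-sided ideal, completing the proof.

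The main conceptual point (and the only step where anything non-formal happens) is the reduction $QrQ \mapsto P(r)Q + Q\tilde P(r)$, which is the single relation that defines $U_{RB}(R)$; everything else is bookkeeping using unitality of $R$ and the fact that $R$ and $Q$ generate $U_{RB}(R)$. I do not expect any obstacle beyond making sure the two-sided nature of the rewriting (both $Q\cdot(sQt)$ and $(sQt)\cdot Q$) is treated, which is handled by the two symmetric calculations above.
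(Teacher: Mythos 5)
Your proof is correct and rests on exactly the same key step as the paper's argument, namely the rewriting relation $QrQ = P(r)Q + Q\tilde P(r)$ applied to eliminate adjacent $Q$-factors. The paper phrases this as ``iterate the rewriting rule until only one $Q$ remains,'' whereas you package the same induction as the statement that $RQR$ is a two-sided ideal closed under multiplication by the generators $R$ and $Q$; your version is a slightly tidier way of organizing the identical computation.
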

We next determine the multiplication on $RQR$, characterized by the following lemma.

\begin{lemma} Let $(R, P)$ be a Rota-Baxter algebra. Define a multiplication $\cdot$ on $R\otimes R$ by
\begin{equation}
 (r_1\otimes s_{1})\cdot (r_2\otimes s_2)\colon =r_1P(s_1r_2)\otimes s_2+r_1\otimes \tilde{P}(s_1r_2)s_2.
\mlabel{eq:ast1}
\end{equation}
Then $\cdot$ defines a nonunitary associative algebra structure on $R\ot R$.
\mlabel{lem:mult}
\end{lemma}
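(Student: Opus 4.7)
The plan is to verify associativity by direct expansion on simple tensors. Since the formula defining $\cdot$ is $\bfk$-bilinear in each factor, the multiplication extends uniquely to all of $R\otimes R$, and by $\bfk$-bilinearity it suffices to check
\[
((r_1\otimes s_1)\cdot(r_2\otimes s_2))\cdot(r_3\otimes s_3)
=(r_1\otimes s_1)\cdot((r_2\otimes s_2)\cdot(r_3\otimes s_3))
\]
for three simple tensors. Setting $a:=s_1 r_2$ and $b:=s_2 r_3$, each triple product expands into four summands; the summand $r_1 P(a)\otimes \tilde P(b)s_3$ appears on both sides and cancels. Comparing the remaining six summands, associativity reduces to the two scalar identities
\begin{equation*}
P(a)P(b)+P(\tilde P(a)b)=P(aP(b)),\qquad
\tilde P(\tilde P(a)b)=\tilde P(aP(b))+\tilde P(a)\tilde P(b),
\end{equation*}
one in each tensor slot.

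I would prove the first identity by using the Rota-Baxter axiom $P(a)P(b)=P(aP(b))+P(P(a)b)+\lambda P(ab)$ together with the consequence $P(\tilde P(a)b)=-\lambda P(ab)-P(P(a)b)$ obtained from $\tilde P=-\lambda I_R-P$; the two extra terms telescope. For the second identity, the first step is the standard observation that $\tilde P$ is itself a Rota-Baxter operator of weight $\lambda$, which follows by a short direct computation from $P+\tilde P=-\lambda I_R$ and the Rota-Baxter axiom for $P$. Applying the Rota-Baxter axiom for $\tilde P$ to $\tilde P(a)\tilde P(b)$ and then substituting $\tilde P(b)=-\lambda b-P(b)$ into the resulting $\tilde P(a\tilde P(b))$ term produces exactly the desired equality.

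The main obstacle here is not conceptual but bookkeeping: the eight-term expansion of the two triple products leaves ample room for sign errors in the $\tilde P$ manipulations. A sanity check that also motivates the formula is the observation that the defining ideal \eqref{eq:RBO} of $U_{RB}(R)$ can be rewritten as the rewriting rule $QrQ=P(r)Q+Q\tilde P(r)$, so that formally $(r_1 Q s_1)(r_2 Q s_2)=r_1 P(s_1 r_2)Q s_2+r_1 Q\tilde P(s_1 r_2)s_2$, which matches \eqref{eq:ast1} exactly. Morally this makes associativity inherited from $U_{RB}(R)$; however, since injectivity of the candidate map $R\otimes R\to U_{RB}(R)$, $r\otimes s\mapsto rQs$, is not yet available and will in fact be proved downstream using the algebra structure established here (together with Lemma~\ref{lem:ideal}), the argument should not be circular and I will carry out the direct calculation sketched above.
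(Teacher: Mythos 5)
Your proposal is correct and follows essentially the same route as the paper: both expand the two triple products on simple tensors, cancel the common summand $r_1P(a)\otimes\tilde P(b)s_3$, and reduce associativity to the identities $P(a)P(b)+P(\tilde P(a)b)=P(aP(b))$ in the first slot and $\tilde P(\tilde P(a)b)=\tilde P(aP(b))+\tilde P(a)\tilde P(b)$ in the second, which the paper simply calls ``easily verified'' and you correctly derive from the Rota-Baxter axiom and $\tilde P=-\lambda I_R-P$. Your closing remark about the non-circularity of the $U_{RB}(R)$ heuristic is also consistent with how the paper orders its arguments.
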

\begin{proof}
To check the associativity, applying the easily verified identity $P(r)P(s)+P(\tilde{P}(r)s)=P(rP(s))$, we have
\begin{eqnarray*}
&& ((r_1\ot s_1)\cdot(r_2\ot s_2))\cdot(r_3\ot s_3)\\
&&\quad =( r_1P(s_1r_2)\otimes s_2+r_1\otimes \tilde{P}(s_1r_2)s_2)\cdot(r_3\ot s_3) \\
&& \quad = r_1P(s_1r_2)P(s_2r_3)\otimes s_3+ r_1P(s_1r_2)\otimes \tilde{P}(s_2r_3)s_3    \\
 &&\quad \quad  + r_1P(\tilde{P}(s_1r_2)s_2r_3))\otimes s_3+r_1\ot\tilde{P}(\tilde{P}(s_1r_2)s_2r_3)s_3\\
 &&  \quad=r_1P(s_1r_2{P}(s_2r_3))\otimes s_3+ r_1P(s_1r_2)\otimes \tilde{P}(s_2r_3)s_3+r_1\ot\tilde{P}(\tilde{P}(s_1r_2)s_2r_3)s_3.
 \end{eqnarray*}
This agrees with
 $(r_1\ot s_1)\cdot((r_2\ot s_2)\cdot(r_3\ot s_3))$ by similarly
 applying the identity $\tilde{P}(r)\tilde{P}(s)+\tilde{P}(r{P}(s))=\tilde{P}(\tilde{P}(r)s)$.
\end{proof}

Let $ \alpha: R\otimes R\rightarrow R\otimes R$ be defined
by $ (s_1\otimes r_2)\mapsto P(s_1r_2)\otimes 1 +1\otimes \tilde{P}(s_1r_2)$. Then Lemma~\mref{lem:mult} shows that the diagram
\[ \xymatrix{ R\otimes (R\otimes R)\otimes R\ar[d]_{=} \ar[rr]^{I_R\ot \alpha\ot I_R}&&  R\otimes R\otimes R\otimes R \ar[d]^{m\otimes m} \\
(R\otimes R)\otimes (R\otimes R)\ar[rr]^{\cdot}&&
R\otimes R}\]
commutes. Thus the new multiplication $ \cdot $ is an $R$-$R$-bimodule homomorphism with the standard $R$-$R$-module structure on $R\otimes R$.
This property, together with the balance relation $(tr)\cdot t'=t\cdot (rt')$, implies that the associative multiplication $\cdot$ extends to an associative ring structure on $ R\oplus (R\otimes R)$.

\begin{theorem}
\mlabel{thm:rrboconst}
Let $(R,P)$ be a Rota-Baxter algebra of weight $\lambda$. Then we have an algebra isomorphism
$$U\rbring{R} \cong R \oplus (R\ot R).$$
\end{theorem}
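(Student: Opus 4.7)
The plan is to realize $U_{RB}(R)$ as an algebra with underlying $\bfk$-module $R \oplus (R \otimes R)$, where $R$ sits as a unital subalgebra, $R \otimes R$ is a two-sided ideal with the standard $R$-$R$-bimodule structure $r \cdot (s \otimes t) = (rs) \otimes t$ and $(s \otimes t) \cdot r = s \otimes (tr)$, and the product inside $R \otimes R$ is given by Eq.~\eqref{eq:ast1}. First I would verify that this data defines an associative unital $\bfk$-algebra: Lemma~\mref{lem:mult} handles the triple $(R \otimes R)$-case, compatibility of the bimodule action with the multiplication on $R\otimes R$ is the calculation that makes $\cdot$ an $R$-$R$-bimodule map (the commutative diagram after Lemma~\mref{lem:mult}), and the remaining mixed associativity checks are routine.

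Next I would define the forward map $\varphi \colon R \oplus (R \otimes R) \to U_{RB}(R)$ by
\[ \varphi(r, s \otimes t) = \sigma(r) + \sigma(s)\, Q\, \sigma(t), \]
where $\sigma\colon R \to U_{RB}(R)$ is the canonical inclusion. To see $\varphi$ is an algebra homomorphism, the only nontrivial check is the product of two $Q$-monomials: using the relation $QrQ = P(r)Q + Q\tilde{P}(r)$ in $U_{RB}(R)$, one computes
\[ \sigma(s_1)Q\sigma(t_1 s_2)Q\sigma(t_2) = \sigma(s_1)\sigma(P(t_1 s_2))Q\sigma(t_2) + \sigma(s_1)Q\sigma(\tilde{P}(t_1 s_2) t_2), \]
which matches Eq.~\eqref{eq:ast1}. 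Surjectivity of $\varphi$ is then immediate from Lemma~\mref{lem:ideal}: $U_{RB}(R) = \sigma(R) \oplus \langle Q \rangle = \sigma(R) + RQR$.

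For injectivity, I would construct an inverse via the universal property of Proposition~\mref{universal}. Let $A = R \oplus (R \otimes R)$ with the algebra structure above, set $p = 1 \otimes 1 \in A$, and take $\phi\colon R \to A$ to be the inclusion. The required identity $\phi(P(r))p = p\phi(r)p + p\phi(P(r)) + \lambda p\phi(r)$ amounts to checking in $A$ that
\[ P(r) \otimes 1 = (1 \otimes 1) \cdot r \cdot (1 \otimes 1) + (1 \otimes 1)\cdot P(r) + \lambda (1\otimes 1)\cdot r, \]
and the right-hand side simplifies to $P(r) \otimes 1 + 1 \otimes (\tilde{P}(r) + P(r) + \lambda r) = P(r) \otimes 1$ since $\tilde{P} = -\lambda I_R - P$. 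The proposition then provides a unique algebra homomorphism $\psi\colon U_{RB}(R) \to A$ with $\psi|_R = \mathrm{id}_R$ and $\psi(Q) = 1 \otimes 1$.

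Finally, I would check that $\varphi$ and $\psi$ are mutually inverse: $\psi(\varphi(r, s \otimes t)) = r + s \cdot (1 \otimes 1) \cdot t = r + s \otimes t$, and $\varphi(\psi(Q)) = \varphi(1 \otimes 1) = Q$, so $\varphi \circ \psi$ agrees with the identity on the generators $\sigma(R)$ and $Q$ and hence everywhere. The main obstacle I anticipate is the bookkeeping in step one — showing that Eq.~\eqref{eq:ast1} interacts correctly with the $R$-bimodule action so that the full algebra structure on $R \oplus (R \otimes R)$ is genuinely associative — though once this is in place, the universal-property argument for the inverse is essentially forced. The Rota-Baxter axiom itself enters only through Lemma~\mref{lem:mult} and the single verification that $1 \otimes 1 \in A$ satisfies Eq.~\eqref{eq:rbouniv}.
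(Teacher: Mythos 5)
Your proposal is correct and follows essentially the same route as the paper: both build the algebra structure on $R\oplus(R\ot R)$ from Lemma~\mref{lem:mult} and the bimodule compatibility, verify that $1\ot 1$ satisfies the relation $(1\ot 1)\cdot r\cdot(1\ot 1)=P(r)\cdot(1\ot 1)+(1\ot 1)\cdot\tilde{P}(r)$ so that Proposition~\mref{universal} yields the map out of $U_{RB}(R)$, and invert it via $s\ot t\mapsto \sigma(s)Q\sigma(t)$. The only cosmetic difference is that you establish bijectivity by checking that your explicit homomorphism $\varphi$ and the universal map are mutually inverse on generators, whereas the paper gets the same conclusion from the fact that $R\ot R$ is a free $R\ot R^{op}$-module of rank one and $RQR$ is generated by $Q$; both arguments rest on Lemma~\mref{lem:ideal}.
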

\begin{proof}
Let $\frakS=R\oplus R\ot R$ denote the $\bfk$-algebra obtained before the theorem whose multiplication is still denoted by $\cdot$. We note that $\frakS$ is
generated by $R$ and $1\ot 1$ as a $R$-$R$-bimodule. In particular,
$\frakS$ is generated by $R$ and $1\ot 1$ as a $\bfk$-algebra.
There are the natural embedding of $R\rightarrow \frakS$ and the
algebra homomorphism $ \bfk [Q]\rightarrow \frakS$ given by
$Q\mapsto 1\otimes 1$. Thus by the definition of free products,
there is a unique algebra surjection
$\bfk\langle R, \bfk[ Q]\rangle \rightarrow \frakS$.
Furthermore, for any $r\in R$, we have in $ \frakS$
\[(1\otimes 1)\cdot r\cdot (1\otimes 1)=(1\otimes r)\cdot (1\otimes 1)=P(r)\otimes 1+1\otimes \tilde{P}(r) = P(r)\cdot(1\otimes 1)+(1\otimes 1)\cdot\tilde{P}(r).\]
Thus by Proposition~\ref{universal}  we get a unique
algebra homomorphism
\begin{equation}
\eta: U\rbring{R} \to \frakS
\mlabel{eq:rbsurj2}
\end{equation}
such that $ \eta(\langle Q \rangle)\subset R\otimes R$ which then induces a surjective  $R$-$R$-bimodule map $ RQR=\langle Q \rangle\rightarrow R\otimes R$.  Since $R\otimes R$ is a free  $R\otimes_\bfk R^{op}$-module of rank 1 and $ RQR$ is generated by $Q$ as an $R\otimes R^{op}$-module, the $R$-$R$-bimodule map $\chi:  R\otimes R\rightarrow RQR$ with $1\otimes 1\mapsto Q$ is the inverse of $\eta$ as an $R$-$R$-bimodule map. Hence $ \eta $ is an isomorphism of $\bfk$-algebras.
\end{proof}

Here are some direct consequences of Theorem~\mref{thm:rrboconst}.

\begin{coro} Let $ f: (R, P)\rightarrow (R', P')$ be a homomorphism of Rota-Baxter algebras. The induced map $R\otimes R\stackrel{f\otimes f}{\rightarrow} R'\otimes R'$ together with $f: R\rightarrow R'$ defines the induced algebra homomorphism $\tilde{f}:U_{RB}(R)\rightarrow U_{RB}(R')$.

If $f$ is surjective, then so is $\tilde{f}$. If $f$ is injective, then so is the induced map $\tilde{f}$ provided that $R$ and $R'$ are flat $\bfk$-modules.
\end{coro}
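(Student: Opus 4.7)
The plan is to use Theorem~\ref{thm:rrboconst} to translate the statement into one about the decomposition $U_{RB}(R)\cong R\oplus(R\otimes R)$, and then reduce everything to elementary facts about tensor products over $\bfk$.

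First I would check that, under the isomorphism $\eta$ of Theorem~\ref{thm:rrboconst}, the homomorphism $\tilde f$ supplied by Proposition~\ref{pp:rbo}(\ref{it:rbo1}) corresponds precisely to $f\oplus(f\otimes f)$. The $R$-component is automatic since $\tilde f$ is defined so that its restriction to the subalgebra $R\subseteq U_{RB}(R)$ equals $f$. For the component corresponding to $\langle Q\rangle=RQR$ (Lemma~\ref{lem:ideal}), the $R$-$R$-bimodule isomorphism $\chi:R\otimes R\to RQR$ sends $r\otimes s\mapsto rQs$, and $\tilde f(rQs)=f(r)\,Q\,f(s)$ corresponds to $f(r)\otimes f(s)=(f\otimes f)(r\otimes s)$ on the other side. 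Hence $\tilde f=f\oplus(f\otimes f)$ under the identifications.

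Next, for the surjectivity claim, if $f$ is surjective then so is $f\otimes f:R\otimes R\to R'\otimes R'$, since the tensor product over $\bfk$ of two surjections is surjective. Combined with the surjectivity of $f$ on the $R$-summand, we get that $f\oplus(f\otimes f)$ is surjective, so $\tilde f$ is surjective. (This also gives a cleaner proof of Proposition~\ref{pp:rbo}(\ref{it:rbo2}).)

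For the injectivity claim, assume $f$ is injective and that $R$ and $R'$ are flat $\bfk$-modules. Injectivity on the $R$-summand is immediate. For the tensor summand, factor
\[
f\otimes f=(1_{R'}\otimes f)\circ(f\otimes 1_R):R\otimes R\to R'\otimes R\to R'\otimes R'.
\]
Flatness of $R$ makes tensoring with $R$ exact, so $f\otimes 1_R$ is injective; flatness of $R'$ makes tensoring with $R'$ exact, so $1_{R'}\otimes f$ is injective. Hence $f\otimes f$, and therefore $\tilde f$, is injective.

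The only substantive step is the first one, namely confirming that Proposition~\ref{pp:rbo}'s abstract $\tilde f$ coincides with $f\oplus(f\otimes f)$ under $\eta$; once this identification is in hand, surjectivity is a one-liner and injectivity reduces to the standard flatness argument. I do not anticipate a real obstacle, just the bookkeeping of tracking the isomorphism $\eta$ and its inverse $\chi$ from the proof of Theorem~\ref{thm:rrboconst}.
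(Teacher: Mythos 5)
Your proposal is correct and is exactly the intended argument: the paper states this corollary as a direct consequence of Theorem~\ref{thm:rrboconst} without further proof, and the identification of $\tilde f$ with $f\oplus(f\otimes f)$ via the bimodule isomorphism $\chi$, followed by right-exactness of $\otimes_\bfk$ for surjectivity and the two-step flatness factorization for injectivity, is precisely what is meant. No gaps.
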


\begin{coro} The projection map $ R\oplus (R\otimes R)\rightarrow R$ is a homomorphism of $\bfk$-algebras. Thus $R\Mod$ is a full subcategory of $(R,P)\Mod$ containing those $(R,P)$-modules $(M,p)$ with $p=0$.
\mlabel{co:subcat}
\end{coro}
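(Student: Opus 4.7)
The plan is to establish both parts directly from the universal property in Proposition~\ref{universal} and the identification $U_{RB}(R) \cong R \oplus (R\otimes R)$ of Theorem~\ref{thm:rrboconst}, without re-doing any bimodule calculations.

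First I would construct the projection as an algebra map using the universal property. Take the pointed associative algebra $(R, 0)$ with the distinguished element $0 \in R$, and let $\phi = \id_R : R \to R$. Then Eq.~\eqref{eq:rbouniv} becomes $\phi(P(r))\cdot 0 = 0\cdot \phi(r)\cdot 0 + 0\cdot \phi(P(r)) + \lambda \cdot 0\cdot \phi(r)$, which is trivially $0=0$ for every $r \in R$. Hence Proposition~\ref{universal} produces a unique $\bfk$-algebra homomorphism $\eta : U_{RB}(R) \to R$ satisfying $\eta \circ \sigma = \id_R$ and $\eta(Q) = 0$. Under the identification $U_{RB}(R) \cong R \oplus (R\otimes R)$ of Theorem~\ref{thm:rrboconst}, the ideal $\langle Q\rangle$ corresponds to $R\otimes R$ (Lemma~\ref{lem:ideal}), so $\eta$ kills $R\otimes R$ and restricts to the identity on the $R$-summand. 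This is exactly the projection $R \oplus (R\otimes R) \to R$, proving the first assertion.

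For the second assertion, I would use Theorem~\ref{thm:RBM} to pass between $(R,P)\Mod$ and $U_{RB}(R)\Mod$. The algebra map $\eta$ induces a pullback functor $\eta^* : R\Mod \to U_{RB}(R)\Mod \cong (R,P)\Mod$, which endows any $R$-module $M$ with the operator $p = \eta(Q)\cdot(-) = 0$; this gives a fully faithful embedding of $R\Mod$ into $(R,P)\Mod$ whose composition with the forgetful functor $(R,P)\Mod \to R\Mod$ is the identity. Conversely, if $(M,p) \in (R,P)\Mod$ has $p = 0$, then the corresponding $U_{RB}(R)$-action factors through $U_{RB}(R)/\langle Q\rangle \cong R$, so $M$ lies in the image of $\eta^*$. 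For full faithfulness, note that an $R$-linear map $f : M \to N$ between two objects with $p_M = p_N = 0$ automatically satisfies $f\circ p_M = 0 = p_N \circ f$, so $\Hom_R(M,N) = \Hom_{(R,P)}(M,N)$.

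No serious obstacle is expected here: the first part is essentially an invocation of the universal property with the trivial choice $p = 0$, and the second part is the standard pullback-along-a-quotient interpretation of a subcategory. The only point that requires care is to make sure that the projection produced abstractly by the universal property coincides with the set-theoretic projection after the identification of Theorem~\ref{thm:rrboconst}, but this is immediate from the compatibility $\eta|_R = \id_R$ and $\eta(Q) = 0$.
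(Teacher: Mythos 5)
Your proposal is correct and follows essentially the route the paper intends: the corollary is stated as a direct consequence of Theorem~\ref{thm:rrboconst}, and the map $\eta$ with $\eta\circ\sigma=\id_R$ and $\eta(Q)=0$ that you build from Proposition~\ref{universal} is exactly the one the paper already constructed right after that proposition, yielding $U_{RB}(R)/\langle Q\rangle\cong R$ and hence the projection $R\oplus(R\otimes R)\to R$. Your identification of the image of the pullback $\eta^*$ with the modules on which $Q$ (equivalently $p$) acts as zero, and the full-faithfulness check, complete the second assertion in the same spirit as the paper's earlier remark about the subcategory of modules $(M,0)$.
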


Let $(R,P)$ be a Rota-Baxter algebra of weight $\lambda$. Let $R^o$ be the opposite $\bfk$-algebra with multiplication $ r^o s^o:=sr$. Then $(R^{o}, P^{o})$ is also a Rota-Baxter algebra of weight $\lambda$ with $P^o=P$ as $ \bfk$-linear map on $ R^o=R$ as $\bfk$-vector space. In particular, $(\widetilde{R^o}, \widetilde{P^o})=(\tilde{R}^o, \tilde{P}^o)$. Let $U_{RB}(R^o)$ be the ring of Rota-Baxter operators of $(R^o,P^o)$. Then the map
$$U_{RB}(R)=R\oplus (R\otimes R) \rightarrow R^o \oplus (R^o\otimes R^o)=U_{RB}(R^o, \tilde{P}^o)$$ defined by
$r\mapsto r^o$ and $r\otimes s\mapsto s^o\otimes r^o$ is an anti-isomorphism of $\bfk$-algebras. Indeed, we can readily check
\[ (r_1\otimes s_1)\cdot (r_2\otimes s_2)\mapsto (s_2^o\otimes r_2^o)\cdot(s_1^o\otimes r_1^o)\]
\[ r_1P(s_1r_2)\otimes s_2+r_1\otimes \tilde{P}(s_1r_2)\mapsto  s^o_2\otimes P(s_1r_2)^o r_1^o+s^o_2\tilde{P}(s_1r_2)^o \otimes r^o_1 \]
using $s_{1}r_{2}=r_{2}^o s^o_{1}$. Thus we obtain

\begin{prop} For any Rota-Baxter algebra $(R, P)$, the twist map $T: R\otimes R\rightarrow  R\otimes R$, with $T(r_1\ot r_2)=r_2\ot r_1$, induces an algebra isomorphism $U_{RB}(R, P)^o\cong U_{RB}(R^o, \tilde{P}^o)$. In particular, if $ R$ is a commutative $\bfk$-algebra, then $U_{RB}(R, \tilde{P})\cong U_{RB}(R, P)^o$.
\end{prop}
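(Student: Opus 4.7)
My plan is to transport the asserted anti-isomorphism through the concrete realization of the ring of Rota-Baxter operators provided by Theorem~\mref{thm:rrboconst}. Write
\[
U_{RB}(R,P)=R\oplus(R\ot R),\qquad U_{RB}(R^o,\tilde{P}^o)=R^o\oplus(R^o\ot R^o),
\]
each equipped with the multiplication~\eqref{eq:ast1}. In the target, the formula involves the Rota-Baxter operator $\tilde{P}^o$ paired with its partner $\widetilde{\tilde{P}^o}$; since $\tilde{\tilde{Q}}=Q$ in general and $\tilde{P}^o$ agrees with $\tilde{P}$ as a $\bfk$-linear map, $\widetilde{\tilde{P}^o}=P^o$, so the roles of $P$ and $\tilde{P}$ in the product formula get swapped in the target. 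Define a $\bfk$-linear map $\Phi$ by $r\mapsto r^o$ on $R$ and $r\ot s\mapsto s^o\ot r^o$ on $R\ot R$. Since the twist $T$ and $(-)^o\colon R\to R^o$ are $\bfk$-linear bijections, $\Phi$ is a $\bfk$-linear bijection, so it suffices to verify $\Phi(xy)=\Phi(y)\Phi(x)$.

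I would check this on the four types of products coming from the decomposition $R\oplus(R\ot R)$. The restriction to $R$ is the standard anti-isomorphism $R\to R^o$. Mixed products reduce to the bimodule rules; for instance, $\Phi(r\cdot(s\ot t))=\Phi(rs\ot t)=t^o\ot(rs)^o=t^o\ot s^o r^o=(t^o\ot s^o)\cdot r^o=\Phi(s\ot t)\Phi(r)$, and dually for $(s\ot t)\cdot r$. The essential step is the product of two elements of $R\ot R$: applying $\Phi$ to Eq.~\eqref{eq:ast1} yields
\[
\Phi\bigl((r_1\ot s_1)\cdot(r_2\ot s_2)\bigr)=s_2^o\ot(r_1P(s_1r_2))^o+(\tilde{P}(s_1r_2)s_2)^o\ot r_1^o,
\]
while using $r_2^o s_1^o=(s_1r_2)^o$ and the operator identifications above, formula~\eqref{eq:ast1} for $U_{RB}(R^o,\tilde{P}^o)$ gives
\[
\Phi(r_2\ot s_2)\cdot\Phi(r_1\ot s_1)=(s_2^o\ot r_2^o)\cdot(s_1^o\ot r_1^o)=(\tilde{P}(s_1r_2)s_2)^o\ot r_1^o+s_2^o\ot(r_1P(s_1r_2))^o,
\]
matching the previous expression.

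The main obstacle is purely bookkeeping: one must simultaneously track three involutions, namely the twist $T$, the opposite $(-)^o$, and $\tilde{(-)}$, and in particular notice that the swap of $P$ and $\tilde{P}$ in~\eqref{eq:ast1} forced by $\widetilde{\tilde{P}^o}=P^o$ is exactly what makes $\Phi$ anti-multiplicative rather than multiplicative. The ``In particular'' clause is then immediate: if $R$ is commutative, then $R^o=R$ as $\bfk$-algebras and $\tilde{P}^o=\tilde{P}$, so $U_{RB}(R^o,\tilde{P}^o)=U_{RB}(R,\tilde{P})$; the anti-isomorphism $\Phi$ therefore gives $U_{RB}(R,P)^o\cong U_{RB}(R,\tilde{P})$, equivalently $U_{RB}(R,\tilde{P})\cong U_{RB}(R,P)^o$.
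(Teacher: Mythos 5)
Your proposal is correct and follows essentially the same route as the paper: the paper also works in the realization $U_{RB}(R)=R\oplus(R\ot R)$ of Theorem~\ref{thm:rrboconst}, defines the same map $r\mapsto r^o$, $r\ot s\mapsto s^o\ot r^o$, and verifies anti-multiplicativity via the identical computation on $(r_1\ot s_1)\cdot(r_2\ot s_2)$ using $r_2^os_1^o=(s_1r_2)^o$ and the swap of $P$ and $\tilde P$ coming from $\widetilde{\tilde{P}^o}=P^o$. Your explicit check of the mixed products and the ``in particular'' clause only makes explicit what the paper leaves as ``readily checked.''
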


\begin{coro} The category  $\on{Mod-}(R,P)$ of right $(R,P)$-modules is isomorphic to the category $(R^o, \tilde{P}^o)\!\Mod$ of left $(R^o, \tilde{P}^o)$-modules under the standard  isomorphism $\on{Mod-}R\longrightarrow R^o\!\on{-Mod}$ together with sending $ p$ to $ -\lambda -p$ for any right $ (R,P)$-module $(M,p)$.
\end{coro}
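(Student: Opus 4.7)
The plan is to decompose the claimed functor as the composition of two more elementary categorical isomorphisms, rather than going directly through the anti-iso of rings of Rota-Baxter operators established in the preceding proposition.

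First I would check that the standard ``right-to-left'' equivalence $\on{Mod-}R \to R^o\Mod$ lifts with no change in the operator $p$ to an isomorphism of Rota-Baxter module categories
\[ \on{Mod-}(R,P) \xrightarrow{\sim} (R^o, P^o)\Mod, \qquad (M, p) \mapsto (M, p), \]
where $R^o$ acts on $M$ by $r^o \cdot x := xr$ and $P^o = P$ as $\bfk$-linear endomorphisms of the underlying $\bfk$-module $R^o = R$. The key verification is that the right Rota-Baxter axiom $p(x)P(r) = p(p(x)r) + p(xP(r)) + \lambda p(xr)$ coincides, after this relabeling, with the left $(R^o, P^o)$-module axiom applied to the same $p$. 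Morphisms transfer unchanged, since the intertwining condition $f\circ p_M = p_N\circ f$ is independent of which side $R$ acts on.

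Second I would apply the earlier ``tilde'' lemma---asserting $(S, T)\Mod \cong (S, \tilde T)\Mod$ via $(M, p)\mapsto (M, \tilde p)$---to the Rota-Baxter algebra $(R^o, P^o)$, producing the isomorphism
\[ (R^o, P^o)\Mod \xrightarrow{\sim} (R^o, \tilde P^o)\Mod, \qquad (M, p)\mapsto (M, -\lambda\,\on{I}_M - p). \]
Composing the two steps delivers exactly the functor described in the corollary.

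The only genuine obstacle is the bookkeeping of two different tildes: one sits on the operator of the algebra and the other on the operator of the module, and one must be careful not to conflate them with the side-swap of the $R$-action. Once the axiom identification in the first step is carried out, the rest is a mechanical composition of categorical isomorphisms already established. An alternative route via the preceding proposition together with Theorem~\mref{thm:RBM} is possible, but it requires the auxiliary observation that right $U_{RB}(R,P)$-modules correspond to right $(R,P)$-modules only after the same twist $p\leftrightarrow \tilde p$, so the factored plan above seems the most economical.
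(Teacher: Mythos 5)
Your proof is correct, but it takes a genuinely different route from the one the paper intends. The corollary is placed immediately after the anti-isomorphism $U_{RB}(R,P)^o\cong U_{RB}(R^o,\tilde P^o)$ precisely so that it can be read off from that proposition together with Theorem~\mref{thm:RBM}: right $(R,P)$-modules become right $U_{RB}(R,P)$-modules, hence left modules over $U_{RB}(R,P)^o\cong U_{RB}(R^o,\tilde P^o)$, hence left $(R^o,\tilde P^o)$-modules. As you observe, that route has a hidden subtlety: a right $(R,P)$-module $(M,p)$ in the mirror-image sense of Eq.~\eqref{eq:rbm} corresponds to a right $U_{RB}(R,P)$-module on which $Q$ acts as $\tilde p=-\lambda-p$, not as $p$ (expanding $x\cdot(QrQ)=((x\cdot Q)\cdot r)\cdot Q$ against the defining relation of $I_{R,Q}$ produces the sign discrepancies that only the twist repairs), and this is exactly where the $-\lambda-p$ in the statement comes from. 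Your factorization sidesteps $U_{RB}$ entirely: the side-swap $\on{Mod-}(R,P)\xrightarrow{\sim}(R^o,P^o)\Mod$ is an identity on $p$ because the right Rota-Baxter module axiom transcribes verbatim into the left axiom for $(R^o,P^o)$ of the same weight, and the tilde lemma applied to $(R^o,P^o)$ (using the paper's observation that $\widetilde{P^o}=\tilde P^o$) then supplies the twist $p\mapsto-\lambda-p$. What your approach buys is elementarity and transparency about where the twist enters (it comes from the tilde lemma, not from the opposite-algebra passage); what the paper's approach buys is that the corollary is seen as a formal consequence of the ring-theoretic anti-isomorphism, consistent with the paper's program of reducing Rota-Baxter module theory to module theory over $U_{RB}(R)$. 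Both are valid, and your explicit verification of the axiom transcription in the first step is the only computation that actually needs to be done.
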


\subsection{Special cases and examples}
The results of the last subsection work for any Rota-Baxter algebra. We now consider some special cases.

First as direct consequences of Theorem~\mref{thm:rrboconst}, we display

\begin{coro} If $(R, P)$ is a Rota-Baxter algebra over $\bfk$ which is free over $ \bfk$ with a basis $\{x_{i}\}_{i\in I}$. Then $U_{RB}(R)$ is also $\bfk$-free with a basis $ \{x_{i}\}_{i\in I}\dot{\cup}\{x_{i}\otimes x_{j}\}_{(i,j)\in I\times I}$.  In particular if $R$ is finite dimensional, then $U_{RB}(R) $ has dimension
$(\dim_{\bfk}R)( \dim_{\bfk}R+1)$.
\end{coro}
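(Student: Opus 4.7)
The plan is almost immediate from Theorem~\ref{thm:rrboconst}. First I would invoke that theorem to identify $U_{RB}(R)$ with $R \oplus (R \otimes_{\bfk} R)$ as a $\bfk$-module (the algebra structure is irrelevant for the basis/dimension claim). This reduces the problem to a pure free-module statement.

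Next I would use two standard facts about free modules over a commutative ring $\bfk$: (i) if $R$ is $\bfk$-free with basis $\{x_i\}_{i \in I}$, then $R \otimes_{\bfk} R$ is $\bfk$-free with basis $\{x_i \otimes x_j\}_{(i,j) \in I \times I}$; and (ii) a direct sum of free $\bfk$-modules is free, with basis the disjoint union of the summand bases. Combining these gives that $R \oplus (R \otimes R)$ is $\bfk$-free on $\{x_i\}_{i \in I} \,\dot{\cup}\, \{x_i \otimes x_j\}_{(i,j)\in I \times I}$, which is the claimed basis of $U_{RB}(R)$.

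Finally, for the dimension count in the finite-dimensional case, if $n = \dim_{\bfk} R < \infty$ then the basis above has cardinality $n + n^2 = n(n+1)$, yielding $\dim_{\bfk} U_{RB}(R) = (\dim_{\bfk} R)(\dim_{\bfk} R + 1)$.

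There is essentially no obstacle here; the only minor point worth noting is that the disjoint union notation $\dot{\cup}$ is appropriate because the two summands $R$ and $R \otimes R$ sit in different components of the direct sum decomposition of $U_{RB}(R)$, so even when $|I| = 1$ there is no overlap between the elements $x_i$ (lying in the $R$ summand) and the elements $x_i \otimes x_j$ (lying in the $R \otimes R$ summand). Hence the argument is a three-line consequence of Theorem~\ref{thm:rrboconst} together with freeness of tensor products of free modules.
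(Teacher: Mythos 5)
Your proposal is correct and matches the paper's intent exactly: the paper states this corollary as a direct consequence of Theorem~\ref{thm:rrboconst} without further argument, and your route --- identifying $U_{RB}(R)$ with $R\oplus(R\otimes_\bfk R)$ as a $\bfk$-module and then applying the standard freeness of tensor products and direct sums of free modules --- is precisely the implied justification. The remark about the disjoint union being automatic from the direct sum decomposition is a sensible small addition.
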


\begin{coro} If $\bfk$ is a field and $ (R, P)$ is a Rota-Baxter subalgebra of $ (R', P')$, then $ U_{RB}(R)$ is a subalgebra of $U_{RB}(R')$.
\end{coro}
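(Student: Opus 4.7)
The plan is to deduce this as an immediate consequence of Theorem~\ref{thm:rrboconst} together with the preceding corollary on injective Rota-Baxter algebra homomorphisms. Let $\iota\colon(R,P)\hookrightarrow(R',P')$ denote the inclusion, which is by hypothesis a homomorphism of Rota-Baxter algebras. By Theorem~\ref{thm:rrboconst}, each ring of Rota-Baxter operators decomposes as an algebra (indeed as a $\bfk$-module with the multiplication described in Lemma~\ref{lem:mult}) in the form $R\oplus(R\otimes R)$, and the induced algebra map $\tilde\iota\colon U_{RB}(R)\to U_{RB}(R')$ acts as $\iota$ on the first summand and as $\iota\otimes\iota$ on the second.

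Since $\bfk$ is a field, every $\bfk$-module is free, and in particular flat. I would therefore view $\iota\otimes\iota$ as the composition
\[
R\otimes_\bfk R \xrightarrow{\;\iota\otimes \mathrm{Id}_R\;} R'\otimes_\bfk R \xrightarrow{\;\mathrm{Id}_{R'}\otimes \iota\;} R'\otimes_\bfk R',
\]
where each factor is injective by flatness of $R$ and $R'$ respectively. Combined with the injectivity of $\iota$ on the first summand, this shows that $\tilde\iota$ is injective as a $\bfk$-linear map, and since $\tilde\iota$ is already an algebra homomorphism by the corollary before, it identifies $U_{RB}(R)$ with a subalgebra of $U_{RB}(R')$.

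The argument has no real obstacle: the preceding corollary already handles the case in which $R$ and $R'$ are flat $\bfk$-modules, and the only role of the field hypothesis is to make this flatness automatic. The one point worth double-checking, but essentially routine, is that the $\bfk$-linear decomposition supplied by Theorem~\ref{thm:rrboconst} is compatible with $\tilde\iota$ summand-by-summand; this is built into the explicit $R$-bimodule description of $U_{RB}(R)$ as $R\oplus RQR$ in Lemma~\ref{lem:ideal}, where $RQR$ is identified with $R\otimes R$ as an $R$-bimodule via $1\otimes 1\mapsto Q$.
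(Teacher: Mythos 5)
Your proposal is correct and follows exactly the route the paper intends: the corollary is listed as a direct consequence of Theorem~\ref{thm:rrboconst} together with the immediately preceding corollary (injective $f$ with $R$, $R'$ flat over $\bfk$ gives injective $\tilde f$), and over a field flatness is automatic, so the induced algebra map $\iota\oplus(\iota\otimes\iota)$ is injective and realizes $U_{RB}(R)$ as a subalgebra of $U_{RB}(R')$. No gaps.
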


\begin{coro} If $R$ is free over $\bfk$, then $ U_{RB}(R)$ is free as left and right $R$-module.
\end{coro}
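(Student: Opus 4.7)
The plan is to invoke Theorem~\ref{thm:rrboconst} and read off a basis. By that theorem, $U_{RB}(R)\cong R\oplus (R\otimes R)$ as $\bfk$-algebras. Inspecting the proof, the isomorphism is in fact one of $R$-$R$-bimodules: the embedding $R\hookrightarrow U_{RB}(R)$ identifies $R$ with the first summand, and the $R$-$R$-bimodule map $\chi:R\otimes R\to \langle Q\rangle$ with $1\otimes 1\mapsto Q$ is an iso by freeness of $R\otimes_\bfk R^{op}$ of rank one over itself. Thus the left (resp.\ right) $R$-module structure on $R\oplus(R\otimes R)$ is the obvious one: $R$ acts on itself by multiplication and on $R\otimes R$ by $r\cdot(s\otimes t)=rs\otimes t$ (resp.\ $(s\otimes t)\cdot r=s\otimes tr$).

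Once the bimodule structure is identified, freeness is immediate from the freeness of $R$ over $\bfk$. First, the summand $R$ is free of rank one on either side with basis $\{1\}$. For the summand $R\otimes_\bfk R$, fix a $\bfk$-basis $\{x_i\}_{i\in I}$ of $R$. Then
\[ R\otimes_\bfk R \;=\; R\otimes_\bfk\Bigl(\bigoplus_{i\in I}\bfk x_i\Bigr)\;\cong\;\bigoplus_{i\in I} R\otimes_\bfk \bfk x_i\;\cong\;\bigoplus_{i\in I}R \]
as left $R$-modules, so $\{1\otimes x_i\}_{i\in I}$ is a left $R$-basis of $R\otimes R$. The symmetric calculation gives the right $R$-basis $\{x_i\otimes 1\}_{i\in I}$. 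Concatenating with $\{1\}$ from the first summand yields an explicit left $R$-basis $\{1\}\sqcup\{1\otimes x_i\}_{i\in I}$ of $U_{RB}(R)$, and similarly on the right.

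There is essentially no obstacle: once Theorem~\ref{thm:rrboconst} is in hand, the only substantive point is that the decomposition in that theorem is as $R$-bimodules, which is already visible from how $\chi$ was constructed. If anything, the one small thing to verify is that the $R$-bimodule action on $R\otimes R$ induced from the multiplication $\cdot$ of Lemma~\ref{lem:mult} coincides with the outer action $r\cdot(s\otimes t)\cdot r'=rs\otimes tr'$, which is immediate from Eq.~\eqref{eq:ast1} upon setting one of the two factors to be $r\otimes 1$ or $1\otimes r'$ and using $P(0)=0=\tilde{P}(0)$ after the appropriate cancellation $\tilde{P}(r)+P(r)=-\lambda r$ (in fact the outer action is built into the bimodule map $\chi$, so no computation is needed).
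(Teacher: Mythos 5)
Your proof is correct and is exactly the argument the paper intends: the corollary is stated there as a direct consequence of Theorem~\ref{thm:rrboconst} with no further proof written out, the point being precisely that the isomorphism $U_{RB}(R)\cong R\oplus(R\otimes R)$ is one of $R$-bimodules with the standard outer structure on $R\otimes R$, which is freely generated on either side by $\{1\otimes x_i\}$ (resp.\ $\{x_i\otimes 1\}$) once a $\bfk$-basis $\{x_i\}$ of $R$ is fixed. One caution on your closing aside: multiplying by $r\otimes 1$ in Eq.~\eqref{eq:ast1} computes the product $rQ\cdot sQt=rP(s)Qt+rQ\tilde{P}(s)t$, i.e.\ $rP(s)\otimes t+r\otimes\tilde{P}(s)t$, not the action of $r\in R$ on $s\otimes t$, so that suggested check would not return $rs\otimes t$; but, as your parenthetical correctly notes, no such check is needed because the outer $R$-bimodule action is built into the construction of the ring structure on $R\oplus(R\otimes R)$ and into the bimodule map $\chi$.
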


The Poincar\'e-Birkhoff-Witt theorem for universal enveloping algebra of a Lie algebra implies that the universal enveloping algebra does not have zero divisors.  As illustrated below, even if $R$ is an integral domain, $U_{RB}(R, P)$ may have zero divisors. The correct analogy of $U_{RB}(R, P)$ should be the restricted enveloping algebra for a restricted Lie algebra over a field of characteristic $p$. They are algebras with operators.

Let $(R, P)$ be a Rota-Baxter algebra with $P=0$.  Then $QrQ=-\lambda Qr$ for all $r\in R$. Thus $ (r_1\otimes s_1)\cdot (r_2\otimes s_2)=-\lambda r_1 \otimes s_1r_2s_2$. In particular, when $\lambda=0$ we have $ U_{RB}(R)=R[t]/\langle t^2\rangle$.
On the other hand, if $P=-\lambda I_R$ is a scalar linear map then $\tilde{P}=0$. Thus $( r_1\otimes s_1)\cdot(r_2\otimes s_2)=-\lambda r_1s_1r_2\otimes s_2$.

We finally consider some special Rota-Baxter algebras.

Note that any algebra $R$ can be realized as a Rota-Baxter algebra
by taking its Rota-Baxter operator to be the identity operator, a
Rota-Baxter operator of weight $-1$. In this case, Eq.~(\mref{eq:ast1})
and its degenerated forms become
$$ (r_1\ot s_1)\cdot(r_2\ot s_2)=
r_1s_1r_2\ot s_2, \quad r_1\cdot(r_2\ot s_2)=r_1r_2\ot s_2,
\quad (r_1\ot s_1)\cdot s_2=r_1\ot s_1s_2.$$

In general, for any $u_1, \cdots, u_k\in R\oplus (R\ot R)$, with either $ u_i \in R$ or $u_i \in R\otimes R$ being pure tensors,  we have
$u_1\cdot \ldots \cdot u_k=w_1\ot w_2$ where $w_1\in R$ is the
product of all factors from $R$ in $u_1,\cdots,u_k$ that appear before the last tensor symbol $\otimes$ while $w_2$ is the product of  the factors from
$R$ after the last tensor symbol $\otimes$, unless all $ u_i \in R$ and there is no $\otimes$ appear.  For example,
$$ (r_1\ot s_1)\cdot r_2 \cdot (r_3\ot s_3)\cdot (r_4\ot s_4) \cdot s_5\cdot s_6 = r_1s_1r_2r_3s_3r_4\ot s_4s_5s_6.$$
It follows that, for any $s_1, s_2\in R$, there is $ (1\otimes r-r\otimes 1)\cdot (s_1\otimes s_2)=0$ even though $r\otimes 1\neq 1\otimes r $ in $ R$ if $ r\not \in \bfk$. Thus $U_{RB}(R)$ has zero divisors even if $R$ is an integral domain.

We next consider the case of divided power Rota-Baxter algebra, given by $(R,P)$ where
$$R=\oplus_{k\geq 0} \bfk u_k,\quad u_m u_n =\binc{m+n}{m}u_{m+n},
m, n\geq 0,$$
and $P(u_k)=u_{k+1}.$
This is a Rota-Baxter algebra of weight zero, in fact the free Rota-Baxter algebra of weight zero on the empty set~\cite{Gub}. Then Eq.~(\mref{eq:ast1}) becomes
\begin{equation}
(u_{m_1}\ot u_{n_1})\cdot (u_{m_2}\ot u_{n_2})=\binc{m_1+n_1+m_2}{m_1,n_1,m_2} u_{m_1+n_1+m_2}\ot u_{n_2}
 \mlabel{eq:div0}
\end{equation}
$$u_{m_1}\cdot (u_{m_2}\ot u_{n_2})=\binc{m_1+m_2}{m_1} u_{m_1+m_2}\ot u_{n_2}, \quad (u_{m_1}\ot u_{n_1})\cdot u_{n_2}=\binc{n_1+n_2}{n_1}u_{m_1}\ot u_{n_1+n_2}.$$
Thus the $\bfk$-algebra $ U_{RB}(R, P)$ has basis $\{ u_i, u_j\otimes u_l \;|\; i, j, l\geq 0\}$ with the above defined multiplication.

We finally consider the case of the Rota-Baxter algebra of Laurent series with the projection to the pole part. By a similar computation, we obtain

\begin{prop}
Let $R=\bfk((t))=\bfk[[t]]\oplus t^{-1}\bfk[t^{-1}]$ be the ring of Laurent series with the Rota-Baxter operator being the projection to the pole part. Then in the construction of $U\rbring{R}$ in Theorem~\mref{thm:rrboconst}, namely 
$
U\rbring{R} = R \oplus (R\ot_\bfk R),
$
the product is given by

$$ (t^i\ot t^j)\cdot(t^k\ot t^\ell)=\left\{\begin{array}{ll}
t^{i+j+k}\ot t^\ell, & j+k<0, \\
t^i\ot t^{j+k+\ell}, & j+k\geq 0.
\end{array} \right . $$
More generally,
for  $\fraka=\sum_{i\geq N} a_i t^i, \frakb=\sum_{j\geq N} b_j t^j, \frakc=\sum_{k\geq N} c_kt^k$ and $ \frakd=\sum_{\ell\geq N} d_\ell t^\ell$ in $\bfk((t))$, we have

$$(\fraka\ot \frakb)\cdot (\frakc\ot \frakd)\\
=
(\fraka  \sum_{j,k\geq N, j+k<0} b_jc_k t^{j+k})\ot  \frakd + \fraka\ot (\sum_{j,k\geq N, j+k\geq 0} b_jc_k t^{j+k}  \frakd).
$$
\mlabel{pp:rblau}
\end{prop}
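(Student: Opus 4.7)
The plan is to derive the formula directly from the multiplication rule of Lemma~\mref{lem:mult} / Theorem~\mref{thm:rrboconst}, namely
\[ (r_1\otimes s_1)\cdot(r_2\otimes s_2) = r_1 P(s_1 r_2)\otimes s_2 + r_1 \otimes \tilde P(s_1 r_2) s_2, \]
specialized to the Laurent series setup. The projection to the pole part is a Rota-Baxter operator of weight $\lambda=-1$, so $\tilde P = \on{I}_R - P$ is the complementary projection onto $\bfk[[t]]$. In particular, on monomials one has $P(t^m)=t^m$ and $\tilde P(t^m)=0$ when $m<0$, while $P(t^m)=0$ and $\tilde P(t^m)=t^m$ when $m\geq 0$.

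First I would settle the monomial case. Taking $r_1=t^i$, $s_1=t^j$, $r_2=t^k$, $s_2=t^\ell$, the inner element is $s_1 r_2 = t^{j+k}$, and exactly one of $P(t^{j+k})$ and $\tilde P(t^{j+k})$ equals $t^{j+k}$ while the other vanishes, according to the sign of $j+k$. Plugging this into the formula above gives precisely the displayed two-case identity
\[ (t^i\otimes t^j)\cdot(t^k\otimes t^\ell) = \begin{cases} t^{i+j+k}\otimes t^\ell, & j+k<0,\\ t^i \otimes t^{j+k+\ell}, & j+k\geq 0. \end{cases} \]

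For the general series formula, I would extend by $\bfk$-bilinearity of $\cdot$, which by Theorem~\mref{thm:rrboconst} is the same as the one coming from $\eqref{eq:ast1}$. The identity becomes
\[ (\fraka\otimes \frakb)\cdot(\frakc\otimes\frakd) = \fraka\cdot P(\frakb\frakc)\otimes\frakd + \fraka\otimes \tilde P(\frakb\frakc)\cdot\frakd. \]
The product $\frakb\frakc = \sum_m \bigl(\sum_{j+k=m} b_j c_k\bigr) t^m$ is a well-defined Laurent series because the coefficients of $\frakb$ and $\frakc$ are supported in $\{n\geq N\}$, so for each $m$ only finitely many pairs $(j,k)$ contribute. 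Splitting this Laurent series into its pole part $P(\frakb\frakc) = \sum_{j,k\geq N,\, j+k<0} b_j c_k t^{j+k}$ and its holomorphic part $\tilde P(\frakb\frakc) = \sum_{j,k\geq N,\, j+k\geq 0} b_j c_k t^{j+k}$ and substituting yields exactly the claimed expression.

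The argument is essentially routine, so the only obstacle worth flagging is bookkeeping for the (one-sided) infinite sums: one needs to observe that the restrictions $j+k<0$ and $j+k\geq 0$ each produce legitimate elements of $\bfk((t))$ and that re-arranging the double sum by the value of $j+k$ is therefore legal. Since both sums have bounded-below support and each exponent $m$ receives only finitely many contributions, this causes no genuine difficulty, and the proposition follows.
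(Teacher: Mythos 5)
Your proposal is correct and follows exactly the route the paper intends: the paper offers no written proof beyond the remark that the formula follows ``by a similar computation'' from Eq.~\eqref{eq:ast1}, and your specialization of $(r_1\ot s_1)\cdot(r_2\ot s_2)=r_1P(s_1r_2)\ot s_2+r_1\ot \tilde{P}(s_1r_2)s_2$ to $P=$ pole-part projection (weight $-1$, so $\tilde{P}$ is the projection onto $\bfk[[t]]$) is precisely that computation. Your extra remark on the legitimacy of regrouping the double sum by the value of $j+k$ is a point the paper leaves implicit, and it is handled correctly since the supports are bounded below.
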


We again revisit the product Rota-Baxter algebras considered in Section~\mref{sec:RBprod} and Remark~\mref{rk:RBprodring}.
\begin{remark}
Let $(R_1, P_1)$ and $(R_2, P_2)$ be two Rota-Baxter algebras of
the same weight $ \lambda$ and
$ (R, P):=(R_1\oplus R_2, P_1\oplus P_2)$ be the product Rota-Baxter algebra constructed in Section~\ref{sec:RBprod}.
Consider the homomorphism
$$ \pi: U_{RB}(R,P)\rightarrow U_{RB}(R_1, P_1)\times U_{RB}(R_2, P_2)$$
defined in Remark~\ref{rk:RBprodring}. Noting that $R=R_1\oplus R_2$ as a $\bfk$-module. Theorem~\ref{thm:rrboconst} gives the $\bfk$-module decomposition
\[ R\oplus (R\otimes R)=\big(R_1\oplus (R_1\otimes R_1)\big)\oplus \big(R_2\oplus (R_2\otimes R_2)\big)\oplus \big((R_1\otimes R_2)\oplus (R_2\otimes R_1)\big).\]
The map $\pi$ restricted to subspace $\big(R_1\oplus (R_1\otimes R_1)\big)\oplus \big(R_2\oplus (R_2\otimes R_2)\big)$ defines a $\bfk$-linear isomorphism
\[ \big(R_1\oplus (R_1\otimes R_1)\big)\oplus \big(R_2\oplus (R_2\otimes R_2)\big)\stackrel{\pi}{\longrightarrow} U_{RB}(R_1, P_1)\times U_{RB}(R_2, P_2)
.\]
Thus $ \pi$ is onto with kernel $ (R_1\otimes R_2)\oplus (R_2\otimes R_1)$.  The category of $ U_{RB}(R_1, P_1)\times U_{RB}(R_2, P_2)$-modules is $U_{RB}(R_1, P_1)\Mod \times U_{RB}(R_2, P_2)\Mod$, which is a full subcategory of $ U_{RB}(R, P)\Mod$ consisting of all modules on which $ (R_1\otimes R_2)\oplus (R_2\otimes R_1)$ acts as zero. They are exactly those modules with $p_{12}=0=p_{21}$ as described in Section~\ref{sec:RBprod}.
\label{rk:RBprod-ker}
\end{remark}

\section{Endomorphism Rota-Baxter algebras}
\mlabel{sec:mat}

After a general study of representations of Rota-Baxter algebras, we now turn to the matrix representations which motivated our study.

Let $(A,Q)$ be a commutative Rota-Baxter algebra (of weight $\lambda$). For any positive integer $n$, we obtain an operator $\frakQ$ on the algebra of $n\times n$ matrices $M_n(A)$ on $A$ by defining $\frakQ$ entry-wise:
$$ \frakQ([a_{ij}])=\left [ Q(a_{ij})\right ].$$
It is easy to check~\mcite{EGGV,EGs} that $\frakQ$ is a Rota-Baxter operator of weight $\lambda$. Such a Rota-Baxter algebra is called a {\bf matrix Rota-Baxter algebra.}

Let $(R,P)$ be a Rota-Baxter algebra (of weight $\lambda$). A {\bf matrix representation} with coefficients in $A$ of $(R,P)$ is a homomorphism
$$ f: (R,P) \to (M_n(A),\frakQ)$$
of Rota-Baxter algebras, first appearing in renormalization of quantum field theory~\mcite{EGGV,EGs}. We give a general discussion in this section and give an algebraic Birkhoff factorization for Rota-Baxter modules.

\subsection{Endomorphism Rota-Baxter algebras from coalgebras} \label{sec:2.2}

Let $(A,Q)$ be a  Rota-Baxter algebra. Let $M_{m,n}(A)$ be the set of all $m\times n$ matrices with entries  in $A$. It is naturally an $M_{m}(A)$-$M_n(A)$-bimodule.

We define $\frakQ_{m,n}: M_{m,n}(A)\rightarrow M_{m,n}(A)$ by $ \frakQ_{m,n}(r_{ij})=(Q(r_{ij}))$, which is a $\bfk$-linear map.
\begin{lemma} Let $(A,Q)$ be a Rota-Baxter algebra. For any positive integers $\ell, m, n $, and $ X\in M_{\ell,m}(A)$ and $ Y \in M_{m,n}(A)$, we have
\begin{equation}
\frakQ_{\ell,m}(X)\frakQ_{m,n}(Y)=\frakQ_{\ell, n}(\frakQ_{\ell,m}(X)Y+X\frakQ_{m,n}(Y)+\lambda XY). \label{rb-op}
\end{equation}
\end{lemma}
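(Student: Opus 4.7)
The plan is to verify the identity entrywise, reducing it to $\ell\cdot n$ applications of the scalar Rota-Baxter axiom~\eqref{eq:rba} in $A$, followed by linearity of $Q$.

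First I would fix notation by writing $X=(x_{ij})\in M_{\ell,m}(A)$ and $Y=(y_{jk})\in M_{m,n}(A)$, and then compute the $(i,k)$-entry of the left-hand side of~\eqref{rb-op}. By definition of $\frakQ_{\ell,m}$ and $\frakQ_{m,n}$ together with the usual matrix product, this entry equals
\[
\bigl[\frakQ_{\ell,m}(X)\frakQ_{m,n}(Y)\bigr]_{ik}=\sum_{j=1}^{m} Q(x_{ij})\,Q(y_{jk}).
\]

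Next I would apply the scalar Rota-Baxter axiom~\eqref{eq:rba} of $(A,Q)$ to each individual product $Q(x_{ij})Q(y_{jk})$, obtaining
\[
Q(x_{ij})Q(y_{jk}) = Q\!\bigl(Q(x_{ij})\,y_{jk}\bigr) + Q\!\bigl(x_{ij}\,Q(y_{jk})\bigr) + \lambda\, Q(x_{ij}\,y_{jk}).
\]
Summing over $j$ and pulling the finite sum inside $Q$ by $\bfk$-linearity gives
\[
\sum_{j=1}^{m} Q(x_{ij})Q(y_{jk}) = Q\!\left(\sum_{j=1}^{m}\!\bigl(Q(x_{ij})y_{jk}+x_{ij}Q(y_{jk})+\lambda x_{ij}y_{jk}\bigr)\right).
\]

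Finally I would recognize the inner sum as the $(i,k)$-entry of $\frakQ_{\ell,m}(X)Y+X\frakQ_{m,n}(Y)+\lambda XY$, so that the outer $Q$ produces exactly the $(i,k)$-entry of $\frakQ_{\ell,n}$ applied to that matrix, matching the right-hand side of~\eqref{rb-op}. Since this holds for every $(i,k)$, the lemma follows.

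There is no genuine obstacle here: the proof is a purely mechanical bookkeeping argument, and the only thing to be careful about is the compatibility of the three rectangular sizes $\ell,m,n$ with the matrix multiplications and with the fact that $\frakQ_{r,s}$ is defined entrywise on every shape of matrix, so that all three terms $\frakQ_{\ell,m}(X)Y$, $X\frakQ_{m,n}(Y)$ and $\lambda XY$ live in $M_{\ell,n}(A)$ and can be grouped under a single application of $\frakQ_{\ell,n}$.
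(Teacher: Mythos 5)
Your proof is correct and follows essentially the same route as the paper's: both compute the $(i,k)$-entry of the product, apply the scalar Rota-Baxter axiom to each summand $Q(x_{ij})Q(y_{jk})$, and use $\bfk$-linearity of $Q$ to regroup the sum as the entry of the right-hand side. There is nothing to add.
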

\begin{proof} Considering the $(i, j)$-entry of the left hand side matrix, we have
\begin{eqnarray*}
\sum_{l=1}^{m}Q(x_{il})Q(y_{lj})&=&\sum_{l=1}^{m}Q(Q(x_{il})y_{lj}+x_{il}Q(y_{lj})+\lambda x_{il}y_{lj})\\
&=&\sum_{l=1}^{m}Q(Q(x_{il})y_{lj})+\sum_{l=1}^{m}Q(x_{il}Q(y_{lj}))+\sum_{l=1}^{m}\lambda Q(x_{il}y_{lj})
\end{eqnarray*}
which is exactly the $(i,j)$-entry of the matrix on the right hand side of the equation.
\end{proof}

Taking $\ell=m=n$, one recovers the fact that $(M_n(A), \frakQ_{n,n})$ is a Rota-Baxter algebra.
Furthermore $(M_{m,n}(A),\frakQ_{m,n})$ is a strict $(M_m,\frakQ_{m,m})$-$(M_n,\frakQ_{n,n})$-bimodule in the sense of  Eq.~\eqref{eq:rbbimod1}-\eqref{eq:rbbimod2} More precisely, $M_{m,n}(A)$ is a left $M_{m}(A)$-module, a right $ M_n(A)$-module and the operator $\frakQ_{m,n}$ is compatible with the operators $\frakQ_{m,m}$ and $\frakQ_{n,n}$ using  Eq.~\eqref{rb-op}.

The above construction works more generally in the context of coalgebras~\cite{Sw}. Take a coalgebra $H$ over $\bfk$ with comultiplication $ \Delta: H \rightarrow H\ot H$ and co-unit $\epsilon: H \rightarrow \bfk$.  We recall that a {\bf right comodule} of $H$ is a $\bfk$-module $M$ together with a linear map
$$ \delta: M \to M\ot H$$
such that
$$ (\delta\ot 1)\circ \delta = (1\ot\Delta)\circ \delta.$$
If a $\bfk$-submodule $M$ of $H$ is a right coideal of $H$ in the sense that $\Delta(M)\subseteq M\ot H$, then $M$ is a right comodule of $H$. This is the case considered in physics applications~\mcite{EGGV,EGs}. Then the quotient $\bfk$-module $ H/M$ is also a right $H$-comodule.

For any associative $\bfk$-algebra $A$, the set $ H(A):=\Hom_{\bfk}(H,A)$ is an associative algebra with the convolution product $ (f_1\ast f_2)$ defined by
\[  (f_1\ast f_2)(h)=\sum_{(h)}f_1(h_{(1)})f_2(h_{(2)}) \quad \text{for all } f_1, f_2 \in \Hom_{\bfk}(H,A),\]
using Sweedler's notation of $\Delta(h)=\sum_{(h)}h_{(1)}\ot h_{(2)}$.
In particular, when $H$ is a bialgebra, then the subset $ \Hom_{\bfk\text{-Alg}}(H, A)$ is closed under $\ast$ and becomes a semigroup. If $ A$ is a Rota-Baxter algebra with Rota-Baxter operator $Q$, then $\Hom_\bfk(H,A)$, with the operator $P$ defined by
$$P(f)=Q\circ f \quad \text{for all } f \in \Hom_\bfk(H,A),$$
is also a Rota-Baxter algebra~\cite{EGK3}.

We now consider $M\otimes A$ as a right $A$-module and $\End_{A}(M\otimes A)$ as the endomorphism algebra of the right $A$-module.  For $f\in \Hom_\bfk(H,A)$, define $\phi(f) \in \End_{A}( M\ot A)$ by
\begin{equation}
\phi(f)(m\otimes a)=\sum_{(m)}m_{(0)}\ot f(h_{(1)})a ,
\mlabel{eq:psie}
\end{equation}
where  $ a\in A$ and $ m \in M$ and $\delta(m)=\sum_{(m)}m_{(0)}\ot h_{(1)}$.
Then the map
\begin{equation}
\phi: \Hom_\bfk(H,A) \to \End_A(M\otimes A), \quad f\mapsto \phi(f) \quad \text{for all } f\in \Hom_\bfk(H,A),
\mlabel{eq:psi}
\end{equation}
is a $\bfk$-algebra homomorphism: $\phi(f\ast g)=\phi(g)\circ \phi(f)$, making $M\ot A$ into a right module for the algebra $\Hom_\bfk (H,A)$.

When $(A,Q)$ is a Rota-Baxter algebra, we define an $A$-linear operator $\pprime$ on $\End_\bfk(M\ot A)$ by
$$ \pprime(g)(m\ot a)=(1\ot Q)(g(m\ot 1))a \quad \text{ for all } g\in \End_\bfk(M\ot A),  m\in M,  a\in A.$$
Then the pair $(\End_A(M\ot A), \pprime)$ is a Rota-Baxter algebra of weight $ \lambda$. In fact, for $ g_1, g_2 \in \End_A(M\ot A)$ and $m\in M$, denote
\[ g_2(m\ot 1)=\sum_i m_i \ot a_i, \quad g_1(m_i\ot 1)=\sum_j m_{ij}\ot a_{ji}.\]
Then we have
\[ (g_1\circ g_2)(m\ot 1)=\sum_{i,j}m_{ij}\ot a_{ji}a_i, \quad \pprime(g_2)(m\ot 1)=\sum_i m_i \ot Q(a_i), \]
$$ \pprime(g_1)(m_i\ot 1)=\sum_j m_{ij}\ot Q(a_{ji}),\quad
 (\pprime(g_1\circ g_2))(m\ot 1)=\sum_{i,j}m_{ij}\ot Q(a_{ji}a_i).$$
pUsing these expressions one verifies
\begin{eqnarray*}
(\pprime(g_1)\circ \pprime(g_2))(m\ot 1)&=& \sum_{i,j}m_{ij}\ot Q(a_{ji})Q(a_i) \\
&=&\sum_{i,j}m_{ij}\ot Q(a_{ji}Q(a_i)+Q(a_{ji})a_i+\lambda a_{ji}a_i) \\
&=& \pprime(g_1\circ \pprime(g_2)+\pprime(g_1)\circ g_2 +\lambda g_1\circ g_2)(m\ot1).
\end{eqnarray*}
Thus we are led to the following result.

\begin{theorem} Fix a Rota-Baxter algebra $(A, Q)$ of weight $\lambda$, a coalgebra $ H$, and a right $H$-comodule $M$.
\begin{enumerate}
\item The pairs $(\Hom_\bfk(H,A), \homp)$ and $(\End_A(M\ot A), \pprime)$ are Rota-Baxter algebras;
\mlabel{it:homrba1}
\item The algebra homomorphism  $\phi: \Hom_{\bfk}(H, A)\rightarrow \End_A(M\ot A)$ defined in Eq.~$($\mref{eq:psie}$)$ is a homomorphism of Rota-Baxter algebras;
\mlabel{it:homrba2}
\item Equipped with the $\bfk$-linear operator $ p: M\ot A \rightarrow M\ot A$ defined by $ p(m\ot a)=m\ot Q(a)$, the pair $(M\ot A,p)$ becomes a $(\End_A(M\ot A),\pprime)$-module.
\mlabel{it:homrba3}
\end{enumerate}
\mlabel{thm:homrba}
\end{theorem}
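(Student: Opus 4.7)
My plan is to verify the three claims in order, exploiting the fact that each reduces, after unwinding the appropriate Sweedler expansion or right-$A$-module decomposition, to a single application of the Rota-Baxter axiom for $Q$ on $A$.

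For part (\mref{it:homrba1}), the Rota-Baxter identity for $(\End_A(M\ot A),\pprime)$ is essentially the computation already carried out in the paragraph preceding the theorem, so I would simply organize that chain of equalities into a proof. For $(\Hom_\bfk(H,A),\homp)$, I would take $f_1,f_2\in\Hom_\bfk(H,A)$, evaluate $(\homp(f_1)\ast \homp(f_2))(h)$ on an arbitrary $h\in H$ in Sweedler notation to obtain $\sum_{(h)}Q(f_1(h_{(1)}))Q(f_2(h_{(2)}))$, and apply the Rota-Baxter axiom for $Q$ inside each summand. Collecting the three resulting terms recovers exactly $\homp\bigl(\homp(f_1)\ast f_2+f_1\ast \homp(f_2)+\lambda f_1\ast f_2\bigr)(h)$, as required.

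For part (\mref{it:homrba2}), the fact that $\phi$ is a $\bfk$-algebra homomorphism is already asserted, and I would give a brief Sweedler-style verification that $\phi(f_1\ast f_2)=\phi(f_2)\circ\phi(f_1)$ using the comodule coassociativity $(\delta\ot 1)\circ\delta=(1\ot\Delta)\circ\delta$. The compatibility $\phi(\homp(f))=\pprime(\phi(f))$ is then immediate from the definitions of $\phi$, $\homp$ and $\pprime$: both sides evaluated at $m\ot a$ yield $\sum_{(m)} m_{(0)}\ot Q(f(h_{(1)}))a$.

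For part (\mref{it:homrba3}), I would check Eq.~(\mref{eq:rbm}) for a general $g\in\End_A(M\ot A)$ and $x=m\ot a\in M\ot A$. Writing $g(m\ot 1)=\sum_i m_i\ot a_i$ and using the $A$-linearity of $g$ to propagate the factor of $a$, each of the four terms in the Rota-Baxter module axiom becomes an $M$-indexed sum whose $A$-component is one of $Q(a_i)Q(a)$, $Q(a_iQ(a))$, $Q(Q(a_i)a)$ or $Q(a_ia)$. The identity then reduces term-by-term to the Rota-Baxter axiom $Q(a_i)Q(a)=Q(a_iQ(a))+Q(Q(a_i)a)+\lambda Q(a_ia)$. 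I anticipate the main obstacle to be bookkeeping rather than anything conceptual: the real content is simply that the Rota-Baxter axiom on $A$ lifts pointwise through these constructions, and the step requiring the most care is the multiplicativity of $\phi$ in part (\mref{it:homrba2}), where the comodule identity is needed in addition to the coassociativity of $H$.
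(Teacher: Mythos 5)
Your proposal is correct and takes essentially the same approach as the paper: part (\ref{it:homrba1}) for $\End_A(M\ot A)$ is exactly the computation preceding the theorem (the paper simply cites \cite{EGK3} for the $\Hom_\bfk(H,A)$ case rather than redoing the Sweedler calculation), part (\ref{it:homrba2}) is the same direct comparison of $\phi(\homp(f))(m\ot a)$ and $\pprime(\phi(f))(m\ot a)$, and part (\ref{it:homrba3}) is the same reduction, via $g(m\ot 1)=\sum_i m_i\ot a_i$ and right $A$-linearity, to the Rota-Baxter axiom for $Q$.
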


\begin{proof}
(\mref{it:homrba1}) has been proved before the theorem.
\smallskip

\noindent
(\mref{it:homrba2}) For $f\in \Hom_\bfk(H,A)$ and $m\ot a\in M\ot A$, we have
$$ \phi(P(f))(m\ot a)=\sum_{(m)}m_{(0)}\ot P(f)(h_{(1)})a =\sum_{(m)}m_{(0)}\ot Qf(h_{(1)})a$$
and
$$ \pprime(\phi(f))(m\ot a)=(1\ot Q)(\phi(f)(m\ot 1))a =(1\ot Q)\Big(\sum_{(m)}m_{(0)}\ot f(h_{(1)})\Big)a=\sum_{(m)}m_{(0)}\ot Qf(h_{(1)})a.$$
Hence $\phi(P(f))=\pprime(\phi(f))$, as needed.
\smallskip

\noindent
(\mref{it:homrba3})
Denote $g(m\ot 1)=\sum_i m_i\ot a_i$. We have
\begin{eqnarray*}
\pprime(g)p(m\ot a)&=&
\pprime(g)(m\ot Q(a))\\
&=& (1\ot Q)(g(m\ot 1))Q(a)\\
&=& (1\ot Q)\Big(\sum_i m_i\ot a_i\Big)Q(a)\\
&=& \sum_i m_i\ot Q(a_i)Q(a)
= \sum_i m_i\ot \Big( Q(a_iQ(a) + Q(a_i)a +\lambda a_ia\Big)\\
&=& (1\ot Q)\Big( \Big(\sum_i m_i\ot a_i\Big)Q(a)+ (1\ot Q)\Big(\sum_i m_i\ot a_i\Big)a +\lambda \Big(\sum_i m_i\ot a_i\Big)a\Big)\\
&=& p\Big( g p(m\ot a) + \pprime(g) (m\ot a) +\lambda g(m\ot a)\Big),
\end{eqnarray*}
as needed.
\end{proof}

We now make connection with the matrix representation of Rota-Baxter algebras in~\cite{EGGV,EGs}.

Consider the bialgebra  $H=M_n(\bfk)$ with the standard matrix basis $ E_{ij}$ and the comultiplication $ \Delta(E_{ij})=\sum_{l}E_{il}\otimes E_{lj}$. Then the Rota-Baxter algebra structure on $ M_n(A)=\Hom_{\bfk}(M_n(\bfk), A)$ is the same as the one defined in Theorem~\mref{thm:homrba}. If we consider the standard right $M_n(\bfk)$-comodule $ M=\bfk^n$ with standard basis $\{E_1, \cdots, E_n\}$ and $ \delta(E_i)=\sum_{l=1}^n E_l \otimes E_{li}$, then $\bfk^n\otimes A=A^n$ is the right free $A$-module identified with $M_{n,1}(A)$ as a left $M_{n}(A)$-module. Thus a matrix representation of $(R, P)$ over $ (A, Q)$ can be interpreted as a Rota-Baxter algebra homomorphism $ (R,P)\rightarrow (M_n(A), \pprime)$.

There are also other types of matrix Rota-Baxter algebras. Let $ I_{>}=\bfk$-$\on{Span}\{E_{ij}\, |\,i>j\}$, then $ I_>$ is a coideal of $ M_{n}(\bfk)$ and its quotient $M_{n}^u(\bfk):= M_n(\bfk)/I_{>}$ is the upper-triangulate matrices. Then
$M_{n}^u(A)$ is the Rota-Baxter subalgebra of $ M_n(A)$. One can define many other subalgebras of $M_n(A)$ in a similar way. We will describe the representation of these algebras.
In particular, the counit $ \epsilon: H\rightarrow \bfk$ is a homomorphism of coalgebras and thus defines a homomorphism $\epsilon^*: A=\Hom_{\bfk}(\bfk, A)\rightarrow \Hom_\bfk(H, A)$ of Rota-Baxter algebras. Here $ \epsilon^*(a)(h)=\epsilon(h)a$ for all $ h\in H$ and $a\in A$.

In general, let  $M$  be a $H$-comodule which is a free $\bfk$-module with a basis $X$. Then the right $A$-module $M\ot A$ is also free with the same basis $X$.     Fixing a linear order on $X$, then from $\phi$ defined in Eq.~\eqref{eq:psi}  we obtain a $\bfk$-algebra  homomorphism
$$f: \Hom_\bfk(H,A)\to \End_{A}(M\otimes A)=M^c_{|X|\times |X|}(A),$$
where $M^c_{|X|\times |X|}(A)$ denote the matrices with finitely many nonzero entries in each column, giving rise to a matrix representation of $H(A)$ mentioned at the beginning of this section.

We summarize the above discussion as follows.

\begin{prop}
Let $(A,Q)$ be a Rota-Baxter commutative algebra of weight $\lambda$, let $H$ be a coalgebra and let $M$ be a right $H$-comodule. If $M$ is a free $\bfk$-module with basis $X$, then there is a matrix representation
$$f: \Hom_\bfk(H,A)\to M^c_{|X|\times |X|}(A)$$
induced from the algebra homomorphism $\phi$ in Theorem~\mref{thm:homrba}.
\mlabel{thm:maxrb}
\end{prop}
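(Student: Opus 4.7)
The plan is to reduce directly to Theorem~\mref{thm:homrba} by converting the endomorphism algebra $\End_A(M\ot A)$ into column-finite matrices via the given basis. First I would observe that, since $M$ is a free $\bfk$-module on $X$, the right $A$-module $M\ot A$ is free on $\{x\ot 1 \mid x\in X\}$. After fixing a linear order on $X$, every $g\in \End_A(M\ot A)$ is determined by its values on $\{y\ot 1\}$, and writing $g(y\ot 1)=\sum_{x\in X} x\ot g_{xy}$ yields a matrix $(g_{xy})_{x,y\in X}$ over $A$ with only finitely many nonzero entries in each column (the column indexed by $y$). This gives an isomorphism of associative $A$-algebras $\End_A(M\ot A)\cong M^c_{|X|\times |X|}(A)$.

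Next I would check that this identification transports the Rota-Baxter operator $\pprime$ on $\End_A(M\ot A)$ defined in Section~\mref{sec:2.2} to the entry-wise operator $\frakQ$ on $M^c_{|X|\times|X|}(A)$. Unwinding the definition,
\[
\pprime(g)(y\ot 1)=(1\ot Q)(g(y\ot 1))=\sum_{x\in X}x\ot Q(g_{xy}),
\]
so the matrix of $\pprime(g)$ is $\big(Q(g_{xy})\big)_{x,y\in X}$, which is precisely $\frakQ$ applied componentwise. Hence the isomorphism above is an isomorphism of Rota-Baxter algebras $(\End_A(M\ot A),\pprime)\cong (M^c_{|X|\times|X|}(A),\frakQ)$.

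Finally, composing this isomorphism with the Rota-Baxter algebra homomorphism $\phi$ of Theorem~\mref{thm:homrba}(\mref{it:homrba2}) produces the required homomorphism
\[
f:\Hom_\bfk(H,A)\longrightarrow M^c_{|X|\times|X|}(A),
\]
and $f$ is automatically a matrix representation since it is a composition of Rota-Baxter algebra homomorphisms.

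The real work has already been done in Theorem~\mref{thm:homrba}, so the only obstacle here is bookkeeping: one must be careful about the order of composition (since $\phi(f_1\ast f_2)=\phi(f_2)\circ\phi(f_1)$, the matrix identification must be set up with the appropriate convention so that convolution in $\Hom_\bfk(H,A)$ matches matrix multiplication in $M^c_{|X|\times|X|}(A)$), about column-finite versus row-finite matrices (column-finite is forced by right $A$-linearity together with finiteness of $g(y\ot 1)$), and about the coaction: the coefficients $g_{xy}$ coming from $\phi(\xi)$ arise from the comodule structure maps $\delta(y)=\sum y_{(0)}\ot h_{(1)}$, so writing them out explicitly confirms that the composite $f$ sends $\xi\in\Hom_\bfk(H,A)$ to the matrix whose $(x,y)$-entry is a sum of $\xi$ evaluated on pieces of $\delta(y)$, recovering the familiar matrix representation appearing in~\mcite{EGGV,EGs}.
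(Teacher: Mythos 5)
Your proposal is correct and follows essentially the same route as the paper: the paper likewise identifies $\End_A(M\ot A)$ with the column-finite matrices $M^c_{|X|\times|X|}(A)$ via the free $A$-basis $\{x\ot 1\mid x\in X\}$ and then composes with the Rota--Baxter algebra homomorphism $\phi$ of Theorem~\ref{thm:homrba}. Your explicit check that $\pprime$ transports to the entrywise operator, and your remark on the reversed composition order $\phi(f_1\ast f_2)=\phi(f_2)\circ\phi(f_1)$ coming from the right-module convention, only make explicit what the paper leaves implicit.
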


Fixing a linear order on $X$, we define $M^u_{X\times X}(A)$ as the subalgebra of  $M_{X\times X}(A) $ consisting $X\times X$ upper triangular matrices with entries in $A$. It still carries a Rota-Baxter operator acting on a matrix entrywise, giving rise to a matrix Rota-Baxter algebra. If the $H$-comodule structure  $\delta: M \rightarrow M\otimes H$ has the property $ \delta(x)=\sum_{x'} x'\otimes h_{x', x}$ with $ h_{x', x}=0$ unless $ x'\leq x$, then we have $\im\, \phi \subseteq M^u_{X\times X}(A)$.
Such representations have appeared in QFT renormalization~\mcite{EGGV,EGs} as alluded to above.

The definition of $ \phi$ in Eq.~\eqref{eq:psi} can also be extended to $A$-modules. Let $(V, p_V)$ be a left $(A,Q)$-module and $ M$ a right $H$-comodule.
We now define a $\Hom_{\bfk}(H,A)$-module structure on $M\otimes V$ by
\begin{equation}  \label{eq:HAm} f(m\otimes v)=\sum_{m}m_{(0)} \otimes_{\bfk} f(h_{(1)})v \quad \text{ for all } f \in \Hom_{\bfk}(H, A), m\in M, v\in V.
\end{equation}
With the $\bfk$-linear map $1_{M}\otimes_{\bfk} p_{V}: M\otimes V\rightarrow M\otimes V$, we obtain a Rota-Baxter module for $H(A)=\Hom_{\bfk}(H,A)$. Let $H$-Comod denote the category of all right $ H$-comodules.
\begin{prop}
The assignment $(M, (V, p_V))\mapsto (M\otimes V, 1\otimes p_V)$ defines a bifunctor
\[H\text{-Comod} \times (A,Q)\on{-Mod}\rightarrow (H(A),P)\on{-Mod}.\]
\end{prop}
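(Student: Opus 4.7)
The plan is to verify three things: that the action in Eq.~\eqref{eq:HAm} really defines an $H(A)$-module structure on $M\ot V$, that the operator $1_M\ot p_V$ satisfies the Rota-Baxter module axiom Eq.~\eqref{eq:rbm} with respect to $P=Q\circ(-)$, and that the assignment is functorial in both slots.

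First I would check that $M\ot V$ is a left $H(A)$-module. Given $f_1,f_2\in\Hom_\bfk(H,A)$, I would unfold $(f_1\ast f_2)(m\ot v)$ using the convolution formula and Sweedler notation for $\delta(m)=\sum_{(m)}m_{(0)}\ot h_{(1)}$, and on the other hand compute $f_1(f_2(m\ot v))$. The coassociativity-type identity $(\delta\ot 1)\circ\delta=(1\ot\Delta)\circ\delta$ together with the associativity of the $A$-action on $V$ matches the two expressions; this is the standard ``convolution acts on cotensor products'' calculation, essentially the same verification that makes $\phi$ in Eq.~\eqref{eq:psi} an algebra homomorphism, only with $A$ replaced by the $A$-module $V$.

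The crucial step is the Rota-Baxter compatibility. Fix $f\in\Hom_\bfk(H,A)$ and $m\ot v\in M\ot V$. Using Eq.~\eqref{eq:HAm} and $P(f)=Q\circ f$, one computes
\begin{align*}
P(f)\cdot(1\ot p_V)(m\ot v) &= \sum_{(m)}m_{(0)}\ot Q(f(h_{(1)}))\,p_V(v),\\
(1\ot p_V)\bigl(f\cdot(1\ot p_V)(m\ot v)\bigr) &= \sum_{(m)}m_{(0)}\ot p_V\bigl(f(h_{(1)})\,p_V(v)\bigr),\\
(1\ot p_V)\bigl(P(f)\cdot(m\ot v)\bigr) &= \sum_{(m)}m_{(0)}\ot p_V\bigl(Q(f(h_{(1)}))\,v\bigr),\\
\lambda\,(1\ot p_V)\bigl(f\cdot(m\ot v)\bigr) &= \lambda\sum_{(m)}m_{(0)}\ot p_V\bigl(f(h_{(1)})\,v\bigr).
\end{align*}
Setting $a:=f(h_{(1)})\in A$, the required identity reduces term by term in the sum over $(m)$ to
\[
Q(a)\,p_V(v)=p_V(a\,p_V(v))+p_V(Q(a)\,v)+\lambda\,p_V(a\,v),
\]
which is exactly Eq.~\eqref{eq:rbm} for the $(A,Q)$-module $(V,p_V)$. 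So the Rota-Baxter condition for $(M\ot V,1\ot p_V)$ over $(H(A),P)$ follows tensor-wise from the one on $(V,p_V)$.

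For bifunctoriality, I would let $\alpha\colon M\to M'$ be a morphism of right $H$-comodules (so $\delta'\circ\alpha=(\alpha\ot 1)\circ\delta$) and $\beta\colon(V,p_V)\to(V',p_{V'})$ be a morphism of $(A,Q)$-modules (so $\beta$ is $A$-linear and $\beta\circ p_V=p_{V'}\circ\beta$). The candidate arrow is $\alpha\ot\beta$. Linearity over $H(A)$ is immediate from the two equivariance conditions inserted into Eq.~\eqref{eq:HAm}, and commutation with the Rota-Baxter operator $1\ot p_V$ follows from $\beta\circ p_V=p_{V'}\circ\beta$. Identities and composition are preserved because $\ot$ is a bifunctor on $\bfk$-modules, so the two checks above are all that is needed. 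I do not expect any real obstacle here; the only subtle point is keeping Sweedler indices straight in the module compatibility, but the whole proof is a direct unraveling of the definitions and a single application of Eq.~\eqref{eq:rbm} for $(V,p_V)$.
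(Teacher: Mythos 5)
Your proposal is correct and takes essentially the same route as the paper, which states this proposition without a separate proof as a direct consequence of the construction in Eq.~\eqref{eq:HAm}: your key computation is the exact analogue of the paper's verification of Theorem~\ref{thm:homrba}(\ref{it:homrba3}), with the module $(V,p_V)$ and Eq.~\eqref{eq:rbm} playing the roles there played by $(A,Q)$ and the Rota--Baxter axiom~\eqref{eq:rba}. The remaining checks (the convolution action via comodule coassociativity, and functoriality via $\alpha\ot\beta$) are the standard ones the paper leaves implicit.
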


If $H$ is a Hopf algebra, then the category $ H\on{-Comod}$ is a tensor category. If we take $ H=\bfk$ then $ H(A)=A$. In this case the bifunctor is the same as those described in Lemma~\ref{lem:tensor-action}.

As a natural question, if $M$ is a simple $H$-comodule and $V$ is a simple $(A,Q)$-module, will $M\otimes V$ be a simple $H(A)$-module?
Further properties of this functor should be studied relating the categories  $(A, Q)\Mod$ and $(H(A), P)\Mod$.

We also remark that the $H$-comodule structure $M\rightarrow M\otimes H$ plays the role of vector bundles with connections, with $H$ being in the coalgebra of differential forms and the algebra $H(A)$ plays the role of the algebra of differential operators.

\subsection{Bifunctors and schemes from Rota-Baxter algebras}
\label{sec:2.4}
In this subsection we briefly discuss the bifunctor and group schemes on the Hom functor $H(A):=\Hom_\bfk(H,A)$ with the additional structure of a Rota-Baxter structure on either the domain algebra $A$ or the codomain coalgebra $H$.

\subsubsection{Rota-Baxter structure on the codomain}
When the algebra $A$ is equipped with a Rota-Baxter operator $Q$, we obtain a Rota-Baxter operator on $H(A)$ by $\homp(f)=Q\circ f $ for all $ f\in H(A)$. If $\rho:H\rightarrow H'$ is a homomorphism of coalgebras, then the map $ \rho^*: \Hom_\bfk(H', A)\rightarrow \Hom_\bfk(H,A)$ defined by $\rho^*( f)= f\circ \rho$ is a $\bfk$-algebra homomorphism and
\[P(\rho^*(f))=Q\circ (\rho^*(f))=Q\circ f\circ \rho=\rho^*(Q\circ f)=\rho^*(P(f)).\]
Thus $ \rho^*$ is a homomorphism of Rota-Baxter algebras.  In particular, let $I$ be a coideal of $H$ and  $\rho: H\rightarrow H/I$ be the quotient homomorphism of coalgebras. Then
$\rho^*: (H/I)(A)\rightarrow H(A)$ is an embedding of Rota-Baxter algebras.

On the other hand, if $\tau: (A, Q)\rightarrow (A', Q')$ is a homomorphism of Rota-Baxter algebras, then for each coalgebra $H$, the map $ H(\tau): H(A)\rightarrow H(A')$ defined by $ f\mapsto \tau\circ f$ is also a homomorphism of Rota-Baxter algebras.

If we use $\operatorname{RBA}_{\bfk}$ to denote the category of all Rota-Baxter $\bfk$-algebras as above and $ \on{Coalg}_{\bfk}$ to denote the category of $\bfk$-coalgebras, then each Rota-Baxter algebra $(A, Q)$ defines a functor $ \on{Coalg}_{\bfk} \rightarrow \End(\on{RBA}_{\bfk})$, by $ H\mapsto H(A)$, and each homomorphism $ \tau: (A, Q)\rightarrow (A', Q')$ defines a natural transformation $H(A)\mapsto H(A')$. Thus we have a bifunctor
\[\on{Coalg}_{\bfk} \times \on{RBA}_{\bfk}\rightarrow \on{RBA}_{\bfk}, \; (H, A)\mapsto \Hom_{\bfk}(H, A)\]
which is contravariant in the first entry, giving rise to a functor
\begin{equation}
\on{Coalg}^{op}_{\bfk}\rightarrow \End(\on{RBA}_\bfk), \quad H\mapsto H(?).
\mlabel{eq:hfunc}
\end{equation}

Given two coalgebras $H$ and $H'$, since the tensor product coalgebra $H\otimes_{\bfk}H'$ is defined by setting
\[ \Delta_{H\ot H'}=(1\ot T_{23}\ot 1)\circ (\Delta_H\otimes \Delta_{H'}),\]
we have $(H\otimes H')(A)=H(H'(A))$ by using the adjoint property
\[ \Hom_{\bfk}(H\otimes H', A)=\Hom_{\bfk}(H, \Hom_{\bfk}(H', A))\]
 of $\bfk$-modules. In particular, using the isomorphism of coalgebras $ H\otimes_{\bfk} H'\cong H'\otimes_{\bfk}H $, we have $ H(H'(A))\cong H'(H(A))$.

If $H$ is in addition a bialgebra with multiplication $m: H\otimes H\rightarrow H$, then the functor $T_{H}:=\Hom_\bfk(H, -): \on{RBA}_\bfk\rightarrow \on{RBA}_{\bfk}$ is a comonad with $ m^*: T_H\rightarrow T_{H}\circ T_H$ and the counit $u^*: T_H \rightarrow \on{Id}=T_{\bfk}$ defined by the identity $u: \bfk\rightarrow H$. This follows from a similar argument as the monad case in Lemma~\mref{lem:tensmod}.(\mref{it:tensmod4}). Applying~\cite{Ma}, we now summarize the above construction as follows.

\begin{theorem} The functor in Eq.~$($\mref{eq:hfunc}$)$ is a tensor functor.  In particular, monoid objects in  $ \on{Coalg}_{\bfk}$ corresponds to monads on $\on{RBA}_\bfk$.
\end{theorem}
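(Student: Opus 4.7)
The plan is to exhibit, for each pair of coalgebras $H, H'$, a natural isomorphism of endofunctors
$$\kappa_{H,H'}: T_{H\otimes H'} \xrightarrow{\;\sim\;} T_H\circ T_{H'}$$
on $\on{RBA}_\bfk$, together with an isomorphism $T_\bfk\cong\on{Id}$, satisfying the coherence axioms of a monoidal functor. I would start from the classical Hom-tensor adjunction $\Hom_\bfk(H\otimes H', A)\cong \Hom_\bfk(H,\Hom_\bfk(H',A))$ given by $g\mapsto\tilde g$ with $\tilde g(h)(h')=g(h\otimes h')$ --- already invoked in the excerpt as a $\bfk$-module isomorphism --- and lift it to an isomorphism of Rota-Baxter algebras.

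The first step is to verify that this bijection is a morphism in $\on{RBA}_\bfk$. For the convolution product, unwinding $\Delta_{H\otimes H'}=(1\otimes T_{23}\otimes 1)\circ(\Delta_H\otimes\Delta_{H'})$ shows that both $\widetilde{f_1\ast f_2}(h)(h')$ and $(\tilde f_1\ast \tilde f_2)(h)(h')$ evaluate, in Sweedler notation, to $\sum f_1(h_{(1)}\otimes h'_{(1)})f_2(h_{(2)}\otimes h'_{(2)})$, where the outer convolution on the right uses $\Delta_H$ and the inner one uses $\Delta_{H'}$. For compatibility with the Rota-Baxter operators, the operator on $T_{H\otimes H'}(A)$ is post-composition with the operator $Q$ of $A$, while the operator on $T_H(T_{H'}(A))$ is post-composition with the operator of $T_{H'}(A)$, which is itself post-composition with $Q$; these commute with the adjunction on the nose.

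Next, naturality of $\kappa_{H,H'}$ in $A$, which is what makes it a natural transformation of endofunctors rather than just a family of algebra isomorphisms, is immediate from functoriality of Hom: for any Rota-Baxter morphism $\tau: A\to A'$, the square of post-compositions with $\tau$ commutes. The coherence diagrams --- the associator pentagon encoding $(H\otimes H')\otimes H''\cong H\otimes(H'\otimes H'')$, and the left/right unit laws using $T_\bfk=\Hom_\bfk(\bfk,-)\cong\on{Id}$ --- then reduce to standard identities for the Hom-tensor adjunction, applied in the monoidal category $\on{RBA}_\bfk$ rather than $\bfk\Mod$.

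The second assertion is then pure abstract nonsense once $T$ is established as a tensor functor: any monoidal functor sends monoid objects to monoid objects, and a monoid in $(\on{End}(\on{RBA}_\bfk),\circ,\on{Id})$ is exactly a monad on $\on{RBA}_\bfk$ (see~\cite{Ma}); the argument is formally parallel to Lemma~\mref{lem:tensmod}.(\mref{it:tensmod4}). The main obstacle is really the bookkeeping in the first step --- keeping the twist $T_{23}$ straight and verifying that the $\bfk$-module adjunction respects both the convolution product and the extra Rota-Baxter datum simultaneously; once the core compatibility is in place, the remaining coherence and naturality checks are formal.
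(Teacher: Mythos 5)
Your first step---lifting the $\bfk$-module adjunction $\Hom_{\bfk}(H\otimes H', A)\cong \Hom_{\bfk}(H, \Hom_{\bfk}(H', A))$ to an isomorphism $T_{H\otimes H'}\cong T_H\circ T_{H'}$ of endofunctors of $\on{RBA}_\bfk$ and then checking coherence---is exactly the route the paper takes: its ``proof'' is the discussion immediately preceding the theorem, which invokes the adjunction only at the level of $\bfk$-modules and then cites \cite{Ma}. Your explicit verification that the adjunction intertwines the two convolution products (via the twist $T_{23}$ in $\Delta_{H\otimes H'}$) and the two post-composition Rota-Baxter operators, together with naturality in $A$, is a welcome fleshing-out of what the paper leaves implicit; there is no difference in strategy for the first assertion.

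Your last step, however, has a variance gap. The functor of Eq.~(\mref{eq:hfunc}) is defined on $\on{Coalg}^{op}_{\bfk}$, and a monoid object in $\on{Coalg}_{\bfk}$ (i.e.\ a bialgebra $H$ with multiplication $m\colon H\otimes H\to H$) is a \emph{comonoid} in $\on{Coalg}^{op}_{\bfk}$. A strong monoidal functor carries comonoids to comonoids, so what $m$ induces is $m^*\colon T_H\to T_{H\otimes H}\cong T_H\circ T_H$ and the unit $u\colon\bfk\to H$ induces $u^*\colon T_H\to T_{\bfk}=\on{Id}$; these are a comultiplication and counit making $T_H$ a \emph{comonad}, which is precisely what the paragraph of the paper immediately preceding the theorem asserts, and is the arrow-reversed analogue of Lemma~\mref{lem:tensmod}.(\mref{it:tensmod4}), where the covariant functor $V\mapsto V\otimes(-)$ sends algebras to monads. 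Your appeal to ``monoidal functors send monoid objects to monoid objects'' would require $H$ to be a monoid in the domain category $\on{Coalg}^{op}_{\bfk}$, which it is not, so as written this step does not produce a monad structure on $T_H$. You should either adjust the conclusion to comonads (consistent with the paper's own preceding discussion; the word ``monads'' in the theorem statement appears to be a slip) or supply the missing argument for where a multiplication $T_H\circ T_H\to T_H$ would come from.
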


\subsubsection{Rota-Baxter structure on the domain}

We next consider the case when the coalgebra $H$ is equipped with a Rota-Baxter structure and make connection with affine schemes.

Similar to schemes corresponding to an algebraic $\bfk$-variety,
which is a set functor $\bfk$-$\on{Alg}\rightarrow \on{Set}$. An affine
$\bfk$-scheme is a representable functor $ \Hom_{\bfk-\on{Alg}}(H, ?)$
with $H$ in $ \bfk$-$\on{Coalg}$. An affine group scheme is a
representable functor $ \Hom_{\bfk-\on{Alg}}(H, ?)$ with $ H$ being a
commutative Hopf algebra with the group multiplication being the
convolution product.

We now define an affine Rota-Baxter scheme as a functor
$ X: \bfk\on{-Alg}\rightarrow \on{RBA}_{\bfk}$ defined by
\[ A\mapsto  \Hom_{\bfk}(H, A)\]
with $ H$ being a fixed Rota-Baxter coalgebra in the sense of~\mcite{JZ,ML}. So  $H$ is a coalgebra together with a linear map $ \sigma: H\rightarrow H$ satisfying the linear dual of the Rota-Baxter axiom in Eq.~(\mref{eq:rba}):
\begin{equation} \xymatrix{
H \ar[d]_{1_H}\ar[r]^{\sigma}& H\ar[r]^{\Delta}&H\otimes H
\ar[d]^{\sigma\otimes 1+1\otimes \sigma +\lambda 1\otimes 1} \\
H\ar[r]^{\Delta}&H\otimes H\ar[r]^{\sigma\otimes \sigma} & H\otimes H
}
\end{equation}
As in the above references, we have
\begin{prop} Let $H$ be a Rota-Baxter coalgebra. Then for any $\bfk$-algebra $ A$, $R=\Hom_{\bfk}(H, A)$ is an associative algebra with $1$ and an operator $ P: R\rightarrow R$ defined by $P(f)=f\circ \sigma$ such that $(R, P)$ is a Rota-Baxter algebra of weight $ \lambda$. In particular the assignment $ A\mapsto (\Hom_{\bfk}(H, A), P) $ is a covariant functor.
\end{prop}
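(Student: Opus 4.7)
The plan is to verify the three assertions in order: convolution algebra structure, Rota-Baxter axiom for $P$, and functoriality in $A$. The convolution algebra structure is classical: given the comultiplication $\Delta$ and counit $\varepsilon$ of $H$, the product $(f_1 \ast f_2)(h) = \sum_{(h)} f_1(h_{(1)}) f_2(h_{(2)})$ is associative because $\Delta$ is coassociative, and $\eta_A \circ \varepsilon$ is a two-sided unit because $\varepsilon$ is a counit. This step is essentially bookkeeping, so I would state it and move on.

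For the Rota-Baxter axiom, I would work in Sweedler notation. Fix $f, g \in R$ and $h \in H$ and expand
\[
(P(f) \ast P(g))(h) = \sum_{(h)} f(\sigma(h_{(1)}))\, g(\sigma(h_{(2)})),
\]
so that the element computed is $(f \otimes g)$ applied to $(\sigma \otimes \sigma)\Delta(h)$. The Rota-Baxter coalgebra diagram rewrites this as $(f \otimes g)$ applied to
\[
(\sigma \otimes 1 + 1 \otimes \sigma + \lambda\, 1 \otimes 1)\Delta(\sigma(h)).
\]
Distributing the three summands and re-reading each resulting tensor as a convolution evaluated at $\sigma(h)$ yields
\[
(P(f) \ast P(g))(h) = P(P(f)\ast g)(h) + P(f \ast P(g))(h) + \lambda\, P(f \ast g)(h),
\]
which is exactly Eq.~\eqref{eq:rba} for $(R,P)$. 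The only substantive step is recognizing that the dual diagram defining a Rota-Baxter coalgebra is precisely what is needed to push the two $\sigma$'s in $(\sigma \otimes \sigma) \Delta$ past $\Delta$ at the cost of three terms; I expect this to be the main (though modest) obstacle, since it requires correctly matching the Sweedler indexing on $\sigma(h)$ with that on $h$.

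For functoriality, given a $\bfk$-algebra homomorphism $\tau : A \to A'$, define $\tau_\ast : \Hom_\bfk(H,A) \to \Hom_\bfk(H,A')$ by $\tau_\ast(f) = \tau \circ f$. That $\tau_\ast$ respects the convolution product follows from $\tau$ being an algebra map, and that it preserves units follows from $\tau \circ \eta_A = \eta_{A'}$. Compatibility with the Rota-Baxter operator is immediate:
\[
P'(\tau_\ast(f)) = \tau \circ f \circ \sigma = \tau_\ast(P(f)),
\]
so $\tau_\ast$ is a morphism in $\operatorname{RBA}_\bfk$. Preservation of composition and identities of $\bfk\text{-Alg}$ morphisms is automatic, completing the verification that $A \mapsto (\Hom_\bfk(H,A), P)$ is a covariant functor $\bfk\text{-Alg} \to \operatorname{RBA}_\bfk$.
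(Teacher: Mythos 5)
Your proposal is correct and is exactly the standard dualization argument: the paper itself gives no proof of this proposition, deferring it to the cited references on Rota-Baxter coalgebras, and your computation $(P(f)\ast P(g))(h)=(m_A\circ(f\otimes g))\bigl((\sigma\otimes\sigma)\Delta(h)\bigr)$ followed by the substitution $(\sigma\otimes\sigma)\circ\Delta=(\sigma\otimes 1+1\otimes\sigma+\lambda\,1\otimes 1)\circ\Delta\circ\sigma$ is precisely the intended verification. The functoriality argument via $\tau_\ast(f)=\tau\circ f$ is likewise the expected one, so nothing is missing.
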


Following the philosophy of Grothendieck (see \cite{DG} and \cite{Jan}) one can define more general Rota-Baxter functors $ \calX: \bfk\on{-Alg}\rightarrow RBA_{\bfk}$ as an $\bfk$-algebra functor $\calX$ together with a $\bfk$-linear natural transformation $ P: \calX\rightarrow \calX$. Given a Rota-Baxter  $\bfk$-functor $\calX$, an $\calX$-module is a functor $ \calM: \bfk\on{-Alg}\rightarrow {\bfk}\Mod$ such that for each $ A\in \bfk\on{-Alg}$, $ \calM(A)$ is a right $A$-module and a left $ \calX(A)$-module with a $p_A:  \calM(A)\rightarrow  \calM(A)$ which is $A$-linear (as a right $A$-module) and such that $(\calM(A), p_A)$ is an $ (\calX(A), P_A)$-module.

For example, let $M$ be an $H$-comodule and also a Rota-Baxter comodule in the sense that there is a $\bfk$-linear map
$\rho: M\rightarrow M$ making the following diagram commute
\begin{equation}  \xymatrix{ M\ar[r]^{\delta} \ar[d]_{\delta}& M\otimes H\ar[d]^{\rho\otimes 1 + 1\otimes \sigma +\lambda 1 \otimes 1}\\
M\otimes H\ar[r]_{\rho\otimes \sigma}& M\otimes H.}
\end{equation}
For such a pair $ (M, \rho)$, we define $ \calM(A)=M\otimes_{\bfk}A$. Then $ \calM (A)$ is a left $ \Hom_{\bfk}(H, A)$-module by
\[ f\bullet  (m\otimes a):=(1_M\otimes m_A)\circ(1_M\otimes f \otimes 1_A)\circ (\delta(m)\otimes a)\]
which commutes with the right $ A$-module structure. Further $ p_A:=\rho\otimes  1_A: M\otimes A\rightarrow M\otimes A$ is a homomorphism of right $A$-modules.

For each $ f\in \Hom_{\bfk}(H, A)$,
\begin{eqnarray*} P(f)\bullet p(m\otimes a)&=&(1_M\otimes m_A) \circ (1_M\otimes f \otimes 1_A)\circ( \delta\otimes 1_A)(m\otimes a)\\
&=& (1_M\otimes m_A) \circ [1_M\otimes f )\circ[((\rho\otimes 1 +1\otimes \sigma+\lambda 1_M\otimes 1_H))\circ \delta (m))\otimes a]\\
&=& p\big(P(f)\bullet  (m\otimes a)+f\bullet  p(m\otimes a)+\lambda f\bullet  (m\otimes a)\big).
\end{eqnarray*}
Hence $(M\otimes A, p)$ is a Rota-Baxter module for $( \Hom_{\bfk}(H, A), P)$.

This is parallel to the representation theory of group schemes, suggesting further exploration on the representation theory of Rota-Baxter algebra  schemes.

\subsection{Algebraic Birkhoff factorization for Rota-Baxter representations}

The algebraic Birkhoff factorization lies at the heart of the algebraic approach of Connes and Kreimer to renormalization of perturbative quantum field theory and its many applications in mathematics and physics~\mcite{CK1,CM,EGM,GZ}.

We first recall the general setup of Algebraic Birkhoff Decomposition~\mcite{CK1,EGK3,EGM}.  For any Rota-Baxter algebra $(A, Q)$, the $\bfk$-submodule $A_-:= \bfk+Q(A)$ is a subalgebra of $A$ and $Q(A_-)\subseteq A_-$. Thus $ (A_-, Q)$ is a Rota Baxter subalgebra of $(A, Q)$. Similarly, $A_+:=\bfk+\tilde{Q}(A)$ is a Rota-Baxter subalgebra of $(A, \tilde{Q})$. Then for any coalgebra $H$,
$H(A_-)$ is a Rota-Baxter subalgebra of $H(A)$. If $H$ is a Hopf algebra,
the subset $\Hom_{\bfk-\on{Alg}}(H, A)$ is  closed under the convolution product $\ast$ and $\Hom_{\bfk-\on{Alg}}(H, A_-) \subseteq \Hom_{\bfk-\on{Alg}}(H, A)$ is a sub-semigroup. Similarly, $\Hom_{\bfk-\on{Alg}}(H, A_+) \subseteq \Hom_{\bfk-\on{Alg}}(H, A)$  is a sub-semigroup.

Recall that a connected graded Hopf algebra~\mcite{CK1,Gub} is a Hopf algebra $H$ with grading $H=\oplus_{n\geq 0} H_n$ that is compatible with the multiplication and comultiplication of $H$ and such that $H_0=\bfk$.
Then we have the following {\bf algebraic Birkhoff factorization}~\mcite{CK1}.
\begin{theorem}
Let $H=\oplus_{n\geq 0} H_n$ be a connected graded Hopf algebra and $(A, Q)$ be a commutative Rota-Baxter algebra of weight $-1$ and $Q^2=Q$.
Then there is a map  $ \Hom_{\bfk\on{-Alg}}(H, A)\rightarrow  \Hom_{\bfk\on{-Alg}}(H, A_-)$, denoted by $ \varphi \mapsto \varphi_-$ such that  $\varphi_+:=\varphi_- \ast \varphi$ is in $\Hom_{\bfk\on{-Alg}}(H, A_+)$.
\mlabel{thm:abf}
\end{theorem}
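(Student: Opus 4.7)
The plan is to use the classical Connes--Kreimer recursion, exploiting the connected grading on $H$. Set $\varphi_-(1)=1_A$, and for $x\in H_n$ with $n\geq 1$ define inductively
\[
\varphi_-(x) = -Q\!\left(\varphi(x)+\sum_{(x)}\varphi_-(x'_{(1)})\,\varphi(x'_{(2)})\right),
\]
where $\tilde\Delta(x)=\Delta(x)-x\ot 1-1\ot x=\sum_{(x)}x'_{(1)}\ot x'_{(2)}$ is the reduced coproduct. Since the connectedness forces each $x'_{(i)}$ to have degree strictly less than $n$, this recursion is well-defined. Then set $\varphi_+:=\varphi_-\ast\varphi$.

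The first, easy task is to check the codomain conditions. By construction $\varphi_-(x)\in Q(A)\subset A_-$ for $x$ of positive degree, while $\varphi_-(1)=1\in\bfk\subset A_-$. For $\varphi_+$, expanding convolution and using $\Delta(x)=x\ot 1+1\ot x+\tilde\Delta(x)$ yields
\[
\varphi_+(x)=\varphi_-(x)+\varphi(x)+\sum_{(x)}\varphi_-(x'_{(1)})\varphi(x'_{(2)})=(I-Q)\!\left(\varphi(x)+\sum_{(x)}\varphi_-(x'_{(1)})\varphi(x'_{(2)})\right),
\]
which lies in $\tilde Q(A)\subset A_+$ for $n\geq 1$; and $\varphi_+(1)=1$.

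The heart of the argument is proving multiplicativity of $\varphi_-$ (from which multiplicativity of $\varphi_+=\varphi_-\ast\varphi$ follows automatically, since in a commutative target algebra convolution preserves algebra homomorphisms). Proceed by induction on the total degree $n=|x|+|y|$. The inductive step compares $\varphi_-(xy)$ with $\varphi_-(x)\varphi_-(y)$ by expanding both sides via the graded Hopf-algebra identity
\[
\tilde\Delta(xy)=x\ot y+y\ot x+\tilde\Delta(x)(1\ot y+y\ot 1)+(1\ot x+x\ot 1)\tilde\Delta(y)+\tilde\Delta(x)\tilde\Delta(y),
\]
then invoking the inductive hypothesis inside the reduced-coproduct sums. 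The key algebraic moment is the application of the weight-$(-1)$ Rota--Baxter identity $Q(a)Q(b)=Q(aQ(b))+Q(Q(a)b)-Q(ab)$ to the product of two outer $Q$'s coming from $\varphi_-(x)\varphi_-(y)$; this produces exactly the terms generated by applying the defining recursion to $\varphi_-(xy)$. Commutativity of $A$ is used to match cross terms $\varphi_-(x'_{(1)})\varphi(y'_{(1)})$ against $\varphi(y'_{(1)})\varphi_-(x'_{(1)})$, and multiplicativity of $\varphi$ handles the $\varphi(x'_{(2)}y'_{(2)})=\varphi(x'_{(2)})\varphi(y'_{(2)})$ recombination.

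The main obstacle is the bookkeeping of this final inductive step: the five-term expansion of $\tilde\Delta(xy)$ combined with the Rota--Baxter identity produces a proliferation of summands that must be matched term-by-term, and it is only the specific combination $\lambda=-1$ and the idempotency $Q^2=Q$ (so that, by Proposition~\ref{pp:rbdecomp}(\ref{it:rbdec2}), both $A_-=\bfk+Q(A)$ and $A_+=\bfk+\tilde Q(A)$ are closed under multiplication) that makes the cancellation exact. Once $\varphi_-$ is shown to be an algebra homomorphism, the identity $\varphi_+=\varphi_-\ast\varphi$ immediately gives $\varphi_+\in\Hom_{\bfk\text{-Alg}}(H,A_+)$, completing the factorization.
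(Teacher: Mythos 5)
Your argument is correct and is precisely the classical Connes--Kreimer/Bogoliubov recursion that the paper itself relies on: Theorem~\mref{thm:abf} is stated with a citation to \cite{CK1} and no proof is given in the paper, so your proposal supplies the standard argument rather than departing from it. The one thinly written step is the inductive verification that $\varphi_-$ is multiplicative, but you have correctly isolated the two ingredients that make the computation close --- the weight-$(-1)$ Rota--Baxter identity applied to the product of the two outer $Q$'s in $\varphi_-(x)\varphi_-(y)$, and the expansion of $\tilde\Delta(xy)$ together with commutativity of $A$ --- and the remaining bookkeeping is the routine, well-documented calculation.
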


The following consequence shows that the algebraic Birkhoff factorization is functorial in Rota-Baxter algebras.

\begin{lemma} If $ f: (A, Q_A)\rightarrow (A', Q_{A'})$ is a homomorphism of commutative Rota-Baxter algebras with $Q_A^2=Q_A$ and $Q_{A'}^2=Q_{A'}$, then for any connected graded Hopf algebra $H$, the map $ f_*: \Hom_\bfk(H, A)\rightarrow \Hom_\bfk(H,A')$ defined by $ f_*(\varphi)=f\circ \varphi$ has the following properties.
\begin{enumerate}
\item $ f_{*}(\Hom_{\bfk\on{-Alg}}(H,A))\subseteq \Hom_{\bfk\on{-Alg}}(H,A')$;
\item For any $ \varphi \in \Hom_{\bfk\on{-Alg}}(H,A) $, $f_*(\varphi_-)=(f_*(\varphi))_-$ and $f_*(\varphi_+)=(f_*(\varphi))_+$.
\end{enumerate}
\end{lemma}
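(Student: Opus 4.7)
The plan is to verify the two assertions in turn, with part (1) essentially definitional and part (2) reducing to the explicit Bogoliubov recursion defining $\varphi_{-}$ together with the Rota-Baxter compatibility $f\circ Q_A = Q_{A'}\circ f$.

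For part (1), I would simply observe that $f_*(\varphi) = f\circ\varphi$ is a composition of $\bfk$-algebra homomorphisms $H\to A\to A'$, hence a $\bfk$-algebra homomorphism; moreover $f_*(\varphi)(1) = f(\varphi(1)) = f(1) = 1$. It is equally immediate that $f_*$ respects the convolution product, i.e.\ $f_*(\psi_1\ast\psi_2) = f_*(\psi_1)\ast f_*(\psi_2)$, since $f$ is linear and multiplicative and $\varphi$ lands in the commutative algebra $A$; this will be needed in part (2).

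For part (2), I would recall the explicit Bogoliubov recursion that uniquely characterizes $\varphi_-$ on a connected graded Hopf algebra $H = \bigoplus_{n\geq 0} H_n$: setting $\varphi_-(1) = 1$ and, for $x\in H_n$ with $n\geq 1$ and reduced coproduct $\widetilde{\Delta}(x) = \sum x'\otimes x''$ (with $\deg x',\deg x'' < n$),
\[
\varphi_-(x) = -Q_A\!\left(\varphi(x) + \sum \varphi_-(x')\,\varphi(x'')\right).
\]
Then I would prove $f\circ\varphi_- = (f\circ\varphi)_-$ by induction on $n$. The base case $n=0$ holds because both sides send $1\mapsto 1$. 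For the inductive step, I would apply $f$ to the displayed recursion, then push $f$ inside $Q_A$ using $f\circ Q_A = Q_{A'}\circ f$, distribute $f$ across the product using that $f$ is an algebra map, and finally substitute the inductive hypothesis $f(\varphi_-(x')) = (f\circ\varphi)_-(x')$ for each $x'$ of strictly smaller degree. The resulting expression matches exactly the recursion defining $(f\circ\varphi)_-(x)$ via $Q_{A'}$, establishing the claim. The identity $f_*(\varphi_+) = (f_*(\varphi))_+$ then follows from $\varphi_+ = \varphi_-\ast\varphi$ and the multiplicativity of $f_*$ under convolution noted above, yielding
\[
f_*(\varphi_+) = f_*(\varphi_-)\ast f_*(\varphi) = (f_*\varphi)_-\ast f_*(\varphi) = (f_*\varphi)_+.
\]

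The main obstacle is purely bookkeeping in the inductive step; there is no new conceptual ingredient, because the Rota-Baxter equivariance of $f$ (together with its multiplicativity) is precisely the data needed to transport the recursion from $A$ to $A'$. One small care point worth highlighting is that the recursion relies on $Q_A^2 = Q_A$ only implicitly, through the guarantee that $\varphi_-(x)\in A_-$; the same hypothesis on $A'$ ensures $(f\circ\varphi)_-$ lands in $A'_-$, so matching the two recursions is legitimate.
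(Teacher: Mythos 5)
Your proof is correct. The paper in fact states this lemma without proof, presenting it as a routine consequence of Theorem~\mref{thm:abf}; your argument --- part (1) by composition of algebra maps, and part (2) by induction on degree through the Bogoliubov recursion $\varphi_-(x)=-Q_A\big(\varphi(x)+\sum\varphi_-(x')\varphi(x'')\big)$ together with $f\circ Q_A=Q_{A'}\circ f$ and the multiplicativity of $f_*$ for convolution --- is precisely the standard argument the authors intend, and it is complete. The only point worth making explicit is that you are identifying $\varphi_-$ with the output of the recursion, which is legitimate because for idempotent $Q$ the Birkhoff factorization with $\varphi_-(1)=1$ is unique and is constructed by exactly this recursion in the cited sources.
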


We now consider an algebraic Birkhoff factorization for Rota-Baxter modules.
For a Rota-Baxter $(A,Q)$-module $(V,p_V)$, the $\bfk$-module $\Hom_{\bfk}(H, V)$  is an $A$-$H$-bimodule with
$$(a\psi h)(x):=a(\psi(hx)) \quad \text{for all } a\in A,  h\in H, x\in H, \psi\in \Hom_{\bfk}(H, V).$$
Then $\Hom_{\bfk}(H, A)$ acts on $ \Hom_{\bfk}(H, V)$ by
\begin{equation}\label{eq:RBM-star} (\varphi\ast \psi)(h)=\sum_{(h)}\varphi(h_{(1)})\psi(h_{(2)})
\end{equation}
for $ h\in H$ and $ \Delta(h)=\sum_{h}h_{(1)}\otimes h_{(2)}$.
With the linear operator
$$p': \Hom_{\bfk}(H, V)\longrightarrow \Hom_{\bfk}(H, V), \quad p'(\psi):=p_V\circ \psi \quad \text{for all } \phi\in \Hom_{\bfk}(H, V),
 $$
the pair $( \Hom_{\bfk}(H, V), p')$ is a Rota-Baxter module for the Rota-Baxter algebra $\Hom_{\bfk}(H,A)$.

 Fix a $\bfk$-algebra homomorphism $\varphi: H\rightarrow A$.  An element $\psi\in \Hom_{\bfk}(H,V)$ is called {\bf $ \varphi$-linear} if $ \psi $ is an $H$-module homomorphism, i.e., $ \psi(hx)=\varphi(h)\psi(x)$ for all $ h\in H$ and $x\in N$. We denote by $ \Hom_{\varphi}(H,V)$ the set of all $ \varphi$-linear elements.  Each $ \psi \in \Hom_{\varphi}(H,V)$ is uniquely determined by $\psi(1)\in V$.

 We note that for any fixed element $v\in V$, $ A_{-}v\subseteq V$ is an $ A_-$-submodule.  Similarly, $ A_+v$ is an $ A_+$-submodule of $V$.  Thus $\Hom_{\bfk}(H,  A_{-}v)$ is a module for the  algebra $ \Hom_{\bfk}(H,  A_{-})$ under the action $\ast$ defined by Eq.~\eqref{eq:RBM-star}. In particular $\Hom_{\bfk}(H,  A_{-}v)\subseteq \Hom_{\bfk}(H,  V)$.

\begin{theorem} Let $H$ be a connected Hopf algebra and $A$ a commutative Rota-Baxter algebra with idempotent operator $Q$.  Suppose  $ (V, p_V)$ is a $(A, Q)$-module and  $\varphi \in \Hom_{\bfk\on{-Alg}}(H, A)$.  For each $ \psi\in \Hom_{\varphi}(H, V)$, define $ \psi_+(h)=\varphi_+(h)\psi(1) $ and $\psi_-(h)=\varphi_-(h)\psi(1) $. Then  we have $\psi_+= \varphi_-\ast \psi \in \Hom_{\varphi_+}(H, A_+ \psi(1))$  and $ \psi_- \in \Hom_{\varphi_-}(H, A_- \psi(1))$.
\end{theorem}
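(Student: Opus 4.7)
The plan is to reduce the module-level factorization to the classical algebra-level algebraic Birkhoff factorization (Theorem~\mref{thm:abf}) by exploiting the fact that a $\varphi$-linear map $\psi \in \Hom_\varphi(H, V)$ is determined by the single value $v_0 := \psi(1) \in V$ via $\psi(h) = \varphi(h) v_0$. Once this reduction is set up, each of the three assertions (the identity $\psi_+ = \varphi_- \ast \psi$, the image conditions $\psi_\pm(H) \subseteq A_\pm v_0$, and the $\varphi_\pm$-linearity of $\psi_\pm$) becomes a very short computation.

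The first step is to unfold $(\varphi_- \ast \psi)(h)$ using the convolution formula~\eqref{eq:RBM-star} and $\varphi$-linearity. With $\Delta(h) = \sum_{(h)} h_{(1)} \otimes h_{(2)}$, the summand $\psi(h_{(2)})$ becomes $\varphi(h_{(2)}) v_0$, so the scalar $v_0$ pulls out of the sum and what remains is $(\varphi_- \ast \varphi)(h) v_0 = \varphi_+(h) v_0 = \psi_+(h)$. This identifies the module convolution with the prescribed formula for $\psi_+$ and simultaneously yields the membership $\psi_+(h) \in A_+ v_0$ because $\varphi_+(h) \in A_+$ by Theorem~\mref{thm:abf}. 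The analogous image containment $\psi_-(h) \in A_- v_0$ is immediate from the definition $\psi_-(h) = \varphi_-(h) v_0$ together with $\varphi_-(h) \in A_-$.

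The second step is to verify the $\varphi_\pm$-equivariance $\psi_\pm(hx) = \varphi_\pm(h)\psi_\pm(x)$. Here I would simply invoke the fact, also supplied by Theorem~\mref{thm:abf}, that $\varphi_\pm$ are $\bfk$-algebra homomorphisms $H \to A_\pm$. Then
\[
\psi_\pm(hx) = \varphi_\pm(hx) v_0 = \varphi_\pm(h)\varphi_\pm(x) v_0 = \varphi_\pm(h)\psi_\pm(x),
\]
which is exactly $\varphi_\pm$-linearity.

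The principal content sits in the first step, in which one must recognize that the module convolution $\varphi_- \ast \psi$ (a priori merely an element of $\Hom_\bfk(H, V)$) collapses, by $\varphi$-linearity, to an object living in the much smaller subspace $A_+ v_0 \subseteq V$. Beyond this observation there is no serious obstacle: the theorem is essentially a module-level corollary of Theorem~\mref{thm:abf} obtained by acting on a single vector $v_0$, and the multiplicativity of $\varphi_\pm$ delivers everything else.
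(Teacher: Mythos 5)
Your proposal is correct and follows essentially the same route as the paper: the key identity $\psi_+=\varphi_-\ast\psi$ is obtained by pulling $\psi(1)$ out of the convolution via $\varphi$-linearity and invoking $\varphi_+=\varphi_-\ast\varphi$ from Theorem~\ref{thm:abf}, while the $\varphi_\pm$-equivariance and image containments follow from the multiplicativity of $\varphi_\pm$ and $\varphi_\pm(H)\subseteq A_\pm$. You merely make explicit two steps the paper leaves implicit (the collapse of $\psi(h_{(2)})$ to $\varphi(h_{(2)})\psi(1)$ and the equivariance check), which is fine.
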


\begin{proof} Since $ \varphi_-$ is an algebra homomorphism, there is $\varphi_-(hh')=\varphi_-(h)\varphi_-(h')$. We clearly have $ \psi_ -\in  \Hom_{\varphi_-}(H, A_{-} \psi(1))$. Similarly, $\psi_+\in  \Hom_{\varphi_+}(H, A_{+} \psi(1))$. We only need to verify $\psi_+=\varphi_-\ast \psi$ which  follows immediately from the algebraic Birkhoff factorization of $\varphi$:
$$
\hspace{3cm} \psi_+(h)=\varphi_+(h)\psi(1) =(\varphi_-\ast \varphi)(h)\psi(1)= (\varphi_-\ast \psi)(h). \hspace{3.5cm} \qedhere
$$
\end{proof}

\section{Rota-Baxter algebras for tensor categories}
\mlabel{sec:out}

We now generalize the concept of Rota-Baxter operators to be defined in a tensor category. Earlier we have been working in the symmetric tensor category $\calT$ of $\bfk$-modules.  We now assume that $\calT$ is a strict symmetric additive tensor category (strict monoidal category with a braiding $
b_{X,Y}: X\otimes Y\rightarrow Y\otimes X$ such that $ b_{X,Y}\circ b_{Y,X}=\on{Id}_{Y\otimes X}$).  The additive structure gives an abelian group structure on the set of morphisms between two objects.  We refer the readers to \cite{Ma} for background on monoidal tensor categories.

An algebra object (or monoid object) in $\calT$ is an object
$A$ together with a morphism $m: A\otimes A\rightarrow A$.
$m$ is associative if
$ m\circ (m\otimes 1)=m\circ(1\otimes m)$.
A Rota-Baxter object of weight $\lambda$ is a triple $ (A, m, P)$ such that
$ (A,m)$ is an algebra and
$ P\in \End_{\calT}(A) $ such that the diagram
\begin{equation}\xymatrix{ A\otimes A\ar[d]_{P\otimes 1+1\otimes P+\lambda\circ (1\otimes 1)} \ar[r]^{P\otimes P}&A\otimes A\ar[d]^{m}\\
A\otimes A \ar[r]_{p\circ m} &A .
}\end{equation}
Here $ \lambda$ is an element of endomorphism ring of the identity functor $\on{Id}: \calT\rightarrow \calT$.

If $ m$ is associative, then $ (A, m, p)$ is a Rota-Baxter associative algebra object. Similarly, $(A,m,p)$ is a Rota-Baxter Lie algebra object if $ m$ is a Lie algebra bracket, i.e.,
it satisfies the condition  $m\circ b_{A,  A} =-m$ and  the Jacobian identity
\[ m\circ (m\otimes 1)\circ (1+b_{123}+b_{123}^2)=0.\]
Here $ b_{123}=(1\otimes b_{A,A})\circ (b_{A, A}\otimes 1)$.

Given any associative algebra object $(A, m)$, there is a natural Lie algebra object $ (A_{Lie}, [,])$ with $ A_{Lie}=A$ as an object in $\calT$ and $[\cdot,\cdot]=m\circ (1-b_{A, A})$. Thus we have a functor  from the category $\on{Ass(\calT)} $ of associative algebra objects in $ \calT$ to the category $\on{Lie}(\calT)$ of Lie algebra objects in $ \calT$ (with morphisms be morphisms in $ \calT $ that commutes with the structure morphisms). However the existence of the left adjoint functor from $ \on{Lie}(\calT)$ to $ \on{Ass}(\calT)$ would depend on the category $ \calT$.

If $(A,m, P)$ is a Rota-Baxter associative algebra object of weight $\lambda$, then $ (A_{Lie}, [,], P)$ is a Rota-Baxter Lie algebra object.

Recall that for an associative algebra object $(A,m)$, an $A$-module is an object $M$ in $\calT$ together with a morphism
$\sigma: A\otimes M \rightarrow M$ satisfying
\begin{equation}
\sigma\circ (1\otimes \sigma)=\sigma \circ(m\otimes 1).
\end{equation}
If $(L, [,])$ is a Lie algebra object in $\calT$, then an
$L$-module is an object $M$ in $ \calT$ together with a
morphism $\sigma: L\otimes M\rightarrow M$ such that
\[ \sigma\circ ([,]\otimes 1)=\sigma\circ(1\otimes \sigma)\circ((1-b_{L,L})\otimes 1).\]

For a Rota-Baxter associative algebra object $(A, P)$ in $\calT$,  an $(A, P)$-module is an $A$-module  $M$ together with a morphism $p: M\rightarrow M$ in $\calT$ such that
\begin{equation}\xymatrix{ A\otimes M\ar[d]_{P\otimes 1+1\otimes p+\lambda\circ \sigma} \ar[r]^{P\otimes P}&A\otimes M\ar[d]^{\sigma}\\
A\otimes M\ar[r]_{p\circ\sigma} &M .
}\end{equation}
The compatibility condition in Eq.~\eqref{eq:associativity} is now
\begin{equation}
\sigma\circ(1\otimes \sigma)\circ (P\otimes P \otimes p)=
\sigma\circ (P\otimes p)\circ (m\otimes 1)\circ ((P\otimes 1+1\otimes P+\lambda(1\otimes 1))\otimes 1)
\end{equation}
which can be verified directly.

Let us give some examples of symmetric tensor categories, in addition to the category $\bfk\Mod$ of all $ \bfk$-modules, where Rota-Baxter algebras and Rota-Baxter modules might be fruitfully studied.

(a) The category of all $\ZZ$-graded  $\bfk$-modules with graded tensor product. This category has two different braidings, one with the standard switching of tensor factors, and another with change of sign $b_{X, Y}(x\otimes y)=(-1)^{ij}(y\otimes x) $ if $x\in X_i$ and $y\in Y_j$ are homogeneous elements.  The corresponding Lie algebras and associative algebras in two different braidings are different. With the first standard braiding, they are graded Lie algebras and graded associative algebras. But in the second signed braiding, the Lie algebra objects are in the super Lie algebra setting.  The Rota-Baxter algebra objects in these tensor categories as well as their representation theory  are very interesting topics. The first standard braiding is closely related to sheaves of the projective varieties. For the second case, taking the $ \ZZ/2\ZZ$-grading, one gets a super Rota-Baxter theory.

(b) The category of all differential graded $\bfk$-modules, whose objects are cochain complexes of $\bfk$-modules. The associative algebra objects are differential graded algebras and the Lie algebra objects are dg Lie algebras. Thus the above discussions also establishes the Rota-Baxter dg associative algebras and  dg Lie algebras.

(c) Let $X$ be a smooth algebraic variety over a field $\bfk$. The category $ \on{Coh}(X)$ of coherent sheaves on $X$ and its bounded derived category $D^b(\on{Coh}(X))$ are  symmetric  tensor categories. It  is interesting to consider  Rota-Baxter algebra objects in this category.  The Rota-Baxter algebras structures on the algebra $\Omega^{\bullet}(X)$ of differential forms should be very interesting and has been discussed in connection with singular hypersurfaces and renormalization on Kausz compactifications~\cite{MN}.

There are many other interesting symmetric tensor categories that have appeared in geometry and topology, as well as mathematical physics. It is interesting to interpret the Rota-Baxter algebra objects in those contexts as well.

Finally we remark that many properties do not require the symmetric property of the tensor category $\calT$ if one is limited to associative algebras only (not Lie algebras). Then one can consider quantum Rota-Baxter algebras by considering the tensor categories corresponding to solutions of Yang-Baxter equations. See~\cite{GL,Ji} for braided Rota-Baxter algebras whose module theory is to be developed.

Categorification of Rota-Baxter algebras has also been considered in \cite{CD} in terms of distributive monoidal category with a duality functor and an endo-functor so that the Grothendieck ring gives a Rota-Baxter algebra. Examples provided there have interesting geometric and topological applications and should be pursued further. This categorification might be related to the categorification in the context of 2-categories which is still to be developed.

\end{document}